\patchcmd{\@maketitle}{\LARGE \@title}{\LARGE\bfseries\@title}{}{}
\renewcommand{\@seccntformat}[1]{\csname the#1\endcsname.\quad}
\definecolor{darkblue}{rgb}{0,0,.5}
\renewcommand{\leq}{\leqslant}
\renewcommand{\geq}{\geqslant}
\renewcommand{\emptyset}{\emptyset}
\let\emptyset\varnothing
\newtheorem{theorem}{Theorem}[section]
\newtheorem{lemma}[theorem]{Lemma}
\newtheorem{corollary}[theorem]{Corollary}
\newtheorem{proposition}[theorem]{Proposition}
\newtheorem{assumption}[theorem]{Assumption}
\theoremstyle{definition}
\newtheorem{definition}[theorem]{Definition}
\theoremstyle{definition}
\theoremstyle{definition}
\newtheorem{remark}[theorem]{Remark}
\newcounter{step}[algocf]
\newcommand\step[1]{%
    \refstepcounter{step}    
    \vskip 0.25\baselineskip
    \ifx\hfuzz#1\hfuzz
        \noindent\(\triangleright\)~\textbf{Step~\arabic{step}.}%
    \else
        \noindent\(\triangleright\)~\textbf{Step~\arabic{step}}~(\texttt{#1})\textbf{.}%
    \fi
}
\newcommand{\weak}{\ensuremath{\:{\rightharpoonup}\;}}
\newcommand{\mE}{\mathcal{E}}
\newcommand{\mN}{\mathcal{N}}
\newcommand{\bone}{\ensuremath{{\boldsymbol{1}}}}
\newcommand{\Qname}{fixed-point relocator}
\newcommand{\be}{\mathbf{e}}
\newcommand{\Lip}{\mathcal{L}}
\newcommand{\tto}{\twoheadrightarrow}
\definecolor{myseagreen}{HTML}{3FBC9D}
\newcommand{\R}{\mathbbm R}
\newcommand{\E}{\mathbbm E}
\newcommand{\N}{\mathbbm N}
\DeclareMathOperator{\diag}{diag}
\DeclareMathOperator{\Inc}{Inc}
\DeclareMathOperator{\proj}{proj}
\DeclareMathOperator{\gra}{gra}
\DeclareMathOperator{\zer}{zer}
\DeclareMathOperator{\Fix}{Fix}
\DeclareMathOperator{\range}{range}
\DeclareMathOperator{\Id}{Id}
\newcommand{\bz}{\mathbf{z}}
\newcommand{\bx}{\mathbf{x}}
\newcommand{\by}{\mathbf{y}}
\newcommand{\bu}{\mathbf{u}}
\newcommand{\bw}{\mathbf{w}}
\newcommand{\bp}{\mathbf{p}}
\newcommand{\bq}{\mathbf{q}}
\newcommand{\bA}{\mathbf{A}}
\newcommand{\bB}{\mathbf{B}}
\newcommand{\bM}{\mathbf{M}}
\newcommand{\bF}{F}
\newcommand{\bN}{\mathbf{N}}
\newcommand{\bP}{\mathbf{P}}
\newcommand{\bR}{\mathbf{R}}
\newcommand{\kkk}{{k \in \N}}
\renewcommand{\Bar}[1]{\overline{#1}}
\newcommand{\vertiii}[1]{{\left\vert\kern-0.25ex\left\vert\kern-0.25ex\left\vert #1 
    \right\vert\kern-0.25ex\right\vert\kern-0.25ex\right\vert}}
\DeclareFontFamily{U}{mathx}{}
\DeclareFontShape{U}{mathx}{m}{n}{<-> mathx10}{}
\DeclareSymbolFont{mathx}{U}{mathx}{m}{n}
\DeclareMathAccent{\widehat}{0}{mathx}{"70}
\DeclareMathAccent{\widecheck}{0}{mathx}{"71}
\begin{document}

\title{Variable Stepsize Distributed Forward-Backward Splitting Methods as Relocated Fixed-Point Iterations}
\author{Felipe Atenas\thanks{School of Mathematics \& Statistics, The University of Melbourne, Australia. E-mail:~\href{href:felipe.atenas@unimelb.edu.au}{felipe.atenas@unimelb.edu.au}}
, 
Minh N.\ Dao\thanks{School of Science, RMIT University, Australia. E-mail:~\href{href:minh.dao@rmit.edu.au}{minh.dao@rmit.edu.au}}, 
and 
Matthew K.\ Tam\thanks{School of Mathematics \& Statistics, The University of Melbourne, Australia. E-mail:~\href{href:matthew.tam@unimelb.edu.au}{matthew.tam@unimelb.edu.au}}
}

\date{\today}

\maketitle

\begin{abstract} 
We present a family of distributed forward-backward methods with variable stepsizes to find a solution of structured monotone inclusion problems. The framework is constructed by means of relocated fixed-point iterations, extending the approach introduced in arXiv:2507.07428 to conically averaged operators, thus including iteration operators for methods of forward-backward type devised by graphs. The family of methods we construct preserve the per-iteration computational cost and the convergence properties of their constant stepsize counterparts. Specifically, we show that the resulting methods generate a sequence that converges to a fixed-point of the underlying iteration operator, whose shadow sequences converge to a solution of the problem. Numerical experiments illustrate the behaviour of our framework in structured sparse optimisation problems.
\end{abstract}

\paragraph{Keywords:}
Splitting algorithms,
forward-backward methods,
variable stepsize,
monotone inclusion,
fixed-point iterations.

\paragraph{Mathematics Subject Classification (MSC 2020):}
68W15,  
49M27,	
47H05,  
47H10,  
47H04, 	
47H09.   



\section{Introduction and motivation}

Many modern applications in statistics \cite{zou2005regularization,tibshirani2005sparsity}, machine learning \cite{candes2012exact}, and optimisation \cite{condat2023proximal}, can be abstractly modelled as a structured inclusion problem of the form \begin{equation} \label{e:distributed-monotone-inclusion} 
    \text{find~} x \in X \text{~such that~} 0 \in \sum_{i=1}^nA_ix + \sum_{j=1}^p B_jx,
\end{equation} where $X$ is a real Hilbert space,  $A_i: X \rightrightarrows X $ is a set-valued operator for $i =1, \dots, n$, and $B_j: X \to X$ is a single-valued operator for $j=1,\dots, p$. Splitting methods exploit the structure of problem \eqref{e:distributed-monotone-inclusion} by performing separate operations on the model components. The family of splitting methods includes the classical \emph{forward-backward} and \emph{Douglas--Rachford} methods \cite{Lions1979SplittingAF,Passty1979ErgodicCT,Bruck1975AnIS} that can solve, respectively, the cases $n=1$ and $p=1$, and $n=2$ and $p=0$. A unifying manner to encode this family of methods is through \emph{fixed-point iterations}. For instance, 
the forward-backward method defines its iterates via \begin{equation} \label{e:fixed-point-iteration}
    z_{n+1} = T_\gamma z_n \quad \text{for all ~} n \in \N,
\end{equation}  where $\gamma >0$ is a parameter, and $T_\gamma := J_{\gamma A_1}(\Id - \gamma B_1)$ is the corresponding iteration operator, $J_{\gamma A_1} = (\Id + \gamma A_1)^{-1}$ is the \emph{resolvent} of $\gamma A_1$, and $\Id$ denotes the identity operator on $X$. When $n=2$ and $p=1$ in \eqref{e:distributed-monotone-inclusion}, the \emph{Davis--Yin three-operator splitting method} \cite{davis2017three} generates its iterates via \eqref{e:fixed-point-iteration} with \begin{equation}\label{e:DY-operator}
    T_\gamma = \Id + J_{\gamma A_2}\big( 2 J_{\gamma A_1} - \Id - \gamma B_1 \big) - J_{\gamma A_1}.
\end{equation}  These methods enjoy convergence guarantees under mild regularity assumptions, yielding, in the limit, a point $z \in X$ such that $z = T_\gamma z$, that is, a \emph{fixed-point} of the operator $T_\gamma$. In the case of the forward-backward method, $z$ is a solution to \eqref{e:distributed-monotone-inclusion}, and for the Davis--Yin method, a solution to \eqref{e:distributed-monotone-inclusion} can be recovered by computing $x = J_{\gamma A_1}z$. The parameter $\gamma >0$, also known as the \emph{stepsize}, plays a crucial role in controlling the convergence of the methods. When the stepsize $\gamma > 0$ is fixed, traditional convergence guarantees require this parameter to lie in an interval of the form $(0, \frac{2}{L})$, where $L > 0$ is a constant measuring some regularity condition of $B_1$, usually \emph{Lipschitz continuity} or \emph{cocoercivity}. Once convergence is established, the next natural question is  efficiency. Improvements in the performance of the methods can be achieved, for example, by allowing the stepsize parameter $\gamma >0$ to vary throughout the iterations. 
Another approach that varies a different type of parameters is the \emph{Fast Iterative Shrinkage-Thresholding Algorithm} (FISTA) \cite{beck2009fast}, and  \emph{Nesterov's acceleration} \cite{nesterov1983method}, based on the idea of \emph{momentum}.

In this work, we present a systematic framework that allows variable stepsizes while preserving convergence guarantees of methods of forward-backward type to solve problem \eqref{e:distributed-monotone-inclusion}.   This unifying theory is realised by means of \emph{relocated fixed-point iterations},  originally devised in \cite{atenas2025relocated} for averaged nonexpansive operators. In this work, we show that the framework can be extended to the \emph{conically averaged} setting, providing more flexibility to the selection of parameters. We apply the theory to obtain convergent variable stepsize multioperator forward-backward methods, 
extending the results originally presented in \cite{atenas2025relocated} for variable stepsize resolvent splitting methods.  As far as the authors know, this is the first work that establishes convergence of multioperator forward-backward splitting methods with variable stepsizes. 

A key ingredient of the analysis is the concept of a \emph{fixed-point relocator} (see Definition~\ref{d:FPR}). An archetypal example  of a fixed-point relocator is the one associated with the \emph{Douglas--Rachford} operator. For $n=2$ and $p=0$ in \eqref{e:distributed-monotone-inclusion}, and $\gamma >0$ fixed, the Douglas--Rachford algorithm defines iterates via \eqref{e:fixed-point-iteration} with \begin{equation}\label{e:DR-operator}
    T_\gamma = \Id + J_{\gamma A_2}\big( 2 J_{\gamma A_1} - \Id  \big) - J_{\gamma A_1},
\end{equation} that is, \eqref{e:DY-operator} with $B_1 = 0$. A fixed-point relocator of the operator in \eqref{e:DR-operator} is given, for all $\delta, \gamma \in \R_{++}$, by \begin{equation} \label{e:FPR-DR}
    Q_{\delta\gets\gamma} := \frac{\delta}{\gamma}\Id + \left(1 - \frac{\delta}{\gamma}\right) J_{\gamma A_1}.
\end{equation} The operator in \eqref{e:FPR-DR} has the following two properties: $Q_{\delta\gets\gamma}^{-1} = Q_{\gamma\gets\delta}$, and $Q_{\delta\gets\gamma}(\Fix T_\gamma) = \Fix T_\delta$ \cite[Lemma~3.1, Theorem~3.5]{atenas2025relocated}. The second property motivates the name fixed-point relocator: for any $x \in \Fix T_\gamma$, $Q_{\delta\gets\gamma}x \in \Fix T_\delta$. The associated relocated fixed-point iteration thus is \begin{equation} \label{e:reloc-iterations}
    z_{k+1} = Q_{\gamma_{k+1}\gets\gamma_k}T_{\gamma_k}z_k \text{~for all~} k \in \N.
\end{equation} As we shall see in \eqref{e:FPR-DY}, the operator in \eqref{e:FPR-DR} is also a fixed-point relocator of the Davis--Yin operator \eqref{e:DY-operator}, which yields the relocated three-operator scheme in Algorithm~\ref{a:DY}.

This paper is organised as follows. In Section~\ref{s:prelim} we define the setting of conically averaged operators, and extend (a version of) the so-called \emph{demiclosedness principle} to this family of operators, crucial for the posterior convergence analysis. In Section~\ref{s:relocated-conic} we present the main components of the fixed-point relocator framework from \cite{atenas2025relocated}, and its version for conically averaged operators. We continue in Section~\ref{s:FB} with the description of a family of relocated forward-backward methods to solve problems of the form \eqref{e:distributed-monotone-inclusion}, and in Section~\ref{s:graph} we examine graph-based forward-backward methods for multioperator inclusions, including the relocated Davis--Yin three operator splitting algorithm as a special case. We also investigate different fixed-point relocators depending on the graph structure used to devise these methods, with the property that they maintain the per-iteration computational cost. 
Finally, we finish in Section~\ref{s:conclusion}  with some concluding remarks,  and future research directions.

\section{Preliminaries} \label{s:prelim}

We denote the set of positive real numbers by $\R_{++}$. For any matrix $M \in \R^{n \times p}$, we use the notation $\bM := M \otimes \Id$, where $\otimes$ denotes the Kronecker product. The bold vector $\bone$ denotes the vector of $1$'s, whose dimension will depend on the context. We also denote by $M^\dagger$ the Moore--Penrose pseudoinverse of $M$. For a vector $x \in \R^d$, $\diag(x) \in \R^{d \times d}$ denotes the diagonal matrix whose diagonal entries are the coordinates of the vector $x$, namely, for all $i = 1,\dots, d$, $\diag(x)_{ii} = x_i$, and $0$ otherwise.

Let $X$ be a real Hilbert space endowed with inner product $\left< \cdot, \cdot \right>$ and induced norm $\|\cdot\|$. We use  $X^{\text{strong}}$ to denote the space $X$ endowed with the norm topology, and $X^{\text{weak}}$ to denote $X$ endowed with the weak topology. As is customary, $\to$ denotes convergence in the strong topology, while $\weak$ denotes convergence in the weak topology. Given $n \geq 1$, the elements in the space $X^n := \Pi_{i=1}^n X$ will be denoted with bold symbols, namely, $\bx = (x_1, \dots, x_n)$. We endow the space $X^n$ with the $\ell_2$-norm, that is, for all $\bx \in X^n$, $\|\bx\| = \sqrt{\sum_{i=1}^n \|x_i\|^2}$, which results in $X^n$ being a Hilbert space itself. We do not make a distinction in the notation for the norm in $X$, $X^n$ and the matrix norm, since it is clear from the context which one we refer to.

Given a set-valued operator $A: X \rightrightarrows X$,  its \emph{graph} is denoted by $\gra A := \{ (x,y) \in X\times X: y \in Ax\}$, its set of \emph{zeros} by $\zer A := \{x \in X : 0 \in Ax\}$, and its set of fixed-points by $\Fix A := \{x \in X: x \in Ax\}$. We also denote the \emph{inverse} of $A$ by $A^{-1}: X \rightrightarrows X$ which is defined by $\gra A^{-1} := \{(y,x) \in X \times X: y \in Ax\}$. We say $A$ is \emph{monotone} if for all
$(x,y), (u,v) \in \gra A$, $\left< x-u, y - v\right> \geq 0,$
and \emph{maximally monotone} if it has no proper monotone extension.  For a maximally monotone operator $A$ on $X$, the resolvent of $A$, $J_A := (\Id + A)^{-1}$, is well-defined on $X$ and single-valued \cite[Proposition 23.8]{BC2017}.
\begin{lemma}[Properties of resolvents {\cite[Lemma 3.1, Proposition 3.4]{atenas2025relocated}}] \label{l:known-J}
Let $A$ be a maximally monotone operator on $X$. Then
    \begin{enumerate}
        \item \label{l:known-J-1} For all $\delta,\gamma \in \R_{++}$, $J_{\delta A}\Big(\tfrac{\delta}{\gamma}\Id 
+ \big(1-\tfrac{\delta}{\gamma}\big)J_{\gamma A} \Big) = J_{\gamma A}$.
\item \label{l:known-J-2}  $ X \times (0, +\infty) \ni (x,\gamma) \mapsto J_{\gamma A}x$ is (strongly) continuous.
    \end{enumerate}
\end{lemma}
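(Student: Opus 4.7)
For part~(i), the plan is to unfold the definition of the resolvent on both sides. Fix $x \in X$ and set $u := J_{\gamma A}x$, so that by definition of the resolvent, $x - u \in \gamma A u$, equivalently $\tfrac{1}{\gamma}(x-u) \in Au$. The goal is to show $u = J_{\delta A}(y)$ where $y := \tfrac{\delta}{\gamma}x + \bigl(1 - \tfrac{\delta}{\gamma}\bigr)u$. A direct computation gives $y - u = \tfrac{\delta}{\gamma}(x-u)$, whence $\tfrac{1}{\delta}(y-u) = \tfrac{1}{\gamma}(x-u) \in Au$, so $y \in u + \delta Au$, i.e.\ $u = J_{\delta A}y$, which is precisely the claimed identity. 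Since the resolvent of a maximally monotone operator is single-valued and everywhere defined, no additional existence argument is required.

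For part~(ii), the plan is to establish joint continuity by separately controlling the $x$-variable (via nonexpansiveness) and the $\gamma$-variable (via the identity from part~(i)), then combining through the triangle inequality. Take a sequence $(x_k, \gamma_k) \to (x, \gamma)$ in $X \times (0,+\infty)$. For the $x$-part, maximal monotonicity of $A$ implies $J_{\gamma_k A}$ is firmly nonexpansive, hence $\|J_{\gamma_k A}x_k - J_{\gamma_k A}x\| \leq \|x_k - x\| \to 0$. For the $\gamma$-part, invoke part~(i) with $(\delta,\gamma)$ replaced by $(\gamma_k,\gamma)$ to write
\begin{equation*}
J_{\gamma A}x = J_{\gamma_k A}\Bigl(\tfrac{\gamma_k}{\gamma}x + \bigl(1 - \tfrac{\gamma_k}{\gamma}\bigr) J_{\gamma A}x\Bigr) =: J_{\gamma_k A}y_k.
\end{equation*}
Since $y_k \to x$ as $\gamma_k \to \gamma$, another application of nonexpansiveness gives $\|J_{\gamma_k A}x - J_{\gamma A}x\| = \|J_{\gamma_k A}x - J_{\gamma_k A}y_k\| \leq \|x - y_k\| \to 0$. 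The triangle inequality closes the argument.

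I do not expect any serious obstacle here: part~(i) is essentially a one-line resolvent identity, and part~(ii) is a standard splitting-the-error argument whose only non-trivial ingredient is part~(i) itself, used to transfer the $\gamma$-dependence into a norm perturbation of the input. The one subtlety worth highlighting is that the argument relies on $\gamma_k \to \gamma$ with $\gamma > 0$ (so $\tfrac{\gamma_k}{\gamma}$ stays bounded and $y_k$ is well-defined), which matches the open domain $(0,+\infty)$ stated in the lemma; continuity would fail at the boundary $\gamma = 0$ in general.
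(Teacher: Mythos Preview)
Your proof is correct. The paper does not actually prove this lemma; it merely cites \cite[Lemma~3.1, Proposition~3.4]{atenas2025relocated} for the result, so there is no in-paper argument to compare against. Your argument for part~(i) is the standard resolvent unfolding, and your use of part~(i) to reduce the $\gamma$-continuity in part~(ii) to nonexpansiveness is a clean self-contained route that avoids appealing to any external continuity results for resolvents.
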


Given a nonempty closed convex set $C \subseteq X$, $P_C:X\to C$ denotes the \emph{projection operator} onto $C$, that is, the mapping such that for all $x \in X$, $\min_{y \in X} \|y-x\| = \|P_Cx-x\|$. The mapping $N_C: X \rightrightarrows X $, given by $N_C(x) := \{ v \in X: \langle v, y -x \rangle \leq 0 \text{~for all~} y \in C \}$ if $x \in C$, and $N_C(x) = \varnothing$ otherwise, is the \emph{normal cone} of $C$, and satisfies $P_C = J_{N_C}$. 

We use  $T: X \to X$ to denote a single-valued operator on $X$. We say a single-valued operator $T$ on $X$ is $L$-Lipschitz continuous with constant $L>0$, if  for all $x,y \in X$, $ \|Tx - Ty\| \leq L \|x-y\|$.  We say $T$ is \emph{nonexpansive} if $T$ is $1$-Lipschitz continuous. For $\beta >0$, a single-valued operator $T$ on $X$ is said to be $\beta$-\emph{cocoercive} if for all $x,y \in X$, $\left< Tx - Ty, x-y \right> \geq \frac{1}{\beta}\|Tx-Ty\|^2$.   In particular, if $T$ is $\beta$-cocoercive, then $T$ is monotone and $\beta$-Lipschitz continuous, and thus maximally monotone \cite[Corollary 20.28]{BC2017}. We say $T$ is \emph{conically} $\eta$\emph{-averaged} \cite[Proposition 2.2]{bartz2022conical} for $\eta > 0$, if for all $x,y \in X$, \begin{equation*}
        \|Tx - Ty\|^2  + \frac{1-\eta}{\eta}\|(\Id-T)x - (\Id-T)y\|^2 \le \|x-y\|^2.
    \end{equation*} Equivalently, an operator $T$ on $X$ is conically $\eta$-averaged  if  there exists a nonexpansive operator $N$ on $X$ such that $T = (1-\eta)\Id + \eta N$ \cite[Definition 2.1]{bartz2022conical}. When $\eta \in (0,1)$, $T$ is said to be $\eta$-averaged nonexpansive. For a maximally monotone operator $A$ on $X$, its resolvent $J_A$ is  $\frac{1}{2}$-averaged nonexpansive. Iteration operators of forward-backward methods are conically averaged, see Lemma~\ref{l:con-ave} below. 


    


Let $\E$ be a finite-dimensional Euclidean space.  The \emph{demiclosedness principle} is a fundamental tool in fixed-point theory used in the convergence analysis of fixed-point iterations. An operator $T$ on $X$ is \emph{demiclosed} if $\gra (\Id - T)$ is sequentially closed in $X^{\text{weak}}  \times X^{\text{strong}}$. The demiclosedness principle states that every nonexpansive operator is demiclosed \cite{browder1968semicontractive}.  For a parametrised family of operators, a generalisation of the demiclosedness principle was presented in \cite{atenas2025relocated}. First, we recall the definition of a \emph{parametric demiclosed} family of operators, and then present the \emph{parametric demiclosedness principle}. Originally, the results were shown for parameters in $\E = \R$, but they can be directly extended for any finite-dimensional Euclidean space $\E$, as in the latter case the strong and weak topologies coincide.

\begin{definition}[Parametric demiclosedness]
Let $D \subseteq X$ be a nonempty sequentially closed set, and $\Gamma \subseteq \E$ be a nonempty set. We say an operator $G: D \times \Gamma \to X$ is \emph{parametrically demiclosed}~if $\gra G$ is sequentially closed in $(X^{\text{weak}} \times \E) \times X^{\text{strong}}$.
\end{definition}


\begin{theorem}[Parametric demiclosedness principle{\cite[Theorem 3.9]{atenas2025relocated}}] \label{t:edemi}
    Let $D \subseteq X$ be a nonempty weakly sequentially closed set,  $\mathcal{P}\subseteq\E$ be a nonempty closed set such that $0 \notin \mathcal{P}$, and let $(T_{p})_{p\in\mathcal{P}}$ be a family of {nonexpansive} operators from $D$ to $X$, such that $(\forall x \in D)\; p \mapsto T_{p}x$ is (strongly) continuous. Then the operator
    \begin{align*}
        D \times \mathcal{P} &\to X\colon(x,p)\mapsto (\Id-T_{p})x
    \end{align*} 
    is parametrically demiclosed.
    \end{theorem}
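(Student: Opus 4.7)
The plan is to mimic the classical Browder-type demiclosedness argument, isolating the parameter dependence through the pointwise continuity hypothesis and closing via Opial's property in Hilbert spaces. Take a sequence $(x_k, p_k)_{k \in \N} \subseteq D \times \mathcal{P}$ with $x_k \weak x$ in $X$, $p_k \to p$ in $\E$, and $(\Id - T_{p_k})x_k \to y$ strongly in $X$. Since $D$ is weakly sequentially closed and $\mathcal{P}$ is closed, the limits satisfy $x \in D$ and $p \in \mathcal{P}$, so that $T_p x$ is well-defined and the target identity $y = (\Id - T_p)x$ is meaningful.

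Setting $w_k := (\Id - T_{p_k}) x_k$, so that $w_k \to y$ strongly and $T_{p_k} x_k = x_k - w_k$, I would use the nonexpansiveness of each $T_{p_k}$ together with the triangle inequality to obtain
\begin{equation*}
    \|x_k - w_k - T_p x\| = \|T_{p_k} x_k - T_p x\| \leq \|T_{p_k} x_k - T_{p_k} x\| + \|T_{p_k} x - T_p x\| \leq \|x_k - x\| + \|T_{p_k} x - T_p x\|.
\end{equation*}
The pointwise continuity $p \mapsto T_p x$ evaluated at the fixed point $x \in D$ yields $\|T_{p_k} x - T_p x\| \to 0$. Taking $\liminf$ as $k \to \infty$ and using $w_k \to y$ strongly delivers
\begin{equation*}
    \liminf_{k \to \infty} \|x_k - (T_p x + y)\| \leq \liminf_{k \to \infty} \|x_k - x\|.
\end{equation*}
Invoking Opial's property, namely that in a Hilbert space $x_k \weak x$ implies $\liminf_{k \to \infty} \|x_k - x\| < \liminf_{k \to \infty} \|x_k - u\|$ for every $u \neq x$, forces $T_p x + y = x$, i.e., $(\Id - T_p) x = y$, which is precisely the sequential closedness of $\gra G$ in $(X^{\text{weak}} \times \E) \times X^{\text{strong}}$.

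The main obstacle is bridging the varying operators $T_{p_k}$ with the limit operator $T_p$. Since continuity $p \mapsto T_p z$ is assumed only pointwise in $z$, with no prescribed uniformity over bounded subsets of $D$, I must apply it at a single judiciously chosen point — here the weak limit $x$ — and lean on the uniform (unit) Lipschitz constant of the family $(T_{p_k})_{k \in \N}$ to absorb $\|T_{p_k} x_k - T_{p_k} x\|$ against $\|x_k - x\|$. The assumption $0 \notin \mathcal{P}$ together with closedness of $\mathcal{P}$ ensures that the limit $p$ remains in the parameter set, consistently with applications where the continuity in $p$ degenerates at $0$ (for instance, resolvents $J_{pA}$ via Lemma~\ref{l:known-J}).
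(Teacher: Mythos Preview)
The paper does not supply its own proof of this theorem; it is cited verbatim from \cite[Theorem~3.9]{atenas2025relocated}. Your argument is correct and is the natural extension of Browder's classical demiclosedness proof via Opial's property to the parametric setting---almost certainly the approach taken in the cited reference as well. One incidental remark: your proof does not actually use the hypothesis $0\notin\mathcal{P}$, which you rightly identify as a provision for downstream applications (e.g.\ Corollary~\ref{l:parametric-DCP-extended}, where one divides by $\eta(p)$) rather than an ingredient of the demiclosedness argument itself.
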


\begin{remark} \label{r:parametric-DCP}
    In the context of Theorem~\ref{t:edemi}, suppose there exist a sequence  $(x_k)_\kkk $ in $D$ converging weakly to $x \in D$, and a sequence $(p_k)_\kkk$ in $\mathcal{P}$ converging to $\overline{p} \in \mathcal{P}$. If $x_k-T_{p_k}x_k \to 0$, then the parametric demiclosedness principle implies that $x \in \Fix  T_{\overline{p}}$. 
\end{remark}

Theorem~\ref{t:edemi} requires the family of operators to be nonexpansive. A demiclosedness principle for conically averaged operators is shown in \cite[Theorem 3.3]{bartz2020demiclosedness}, although not for the parametric case. In the next result, we extend the parametric demiclosedness principle to the class of conically averaged operators.


\begin{corollary}[Parametric demiclosedness principle for conically averaged operators] \label{l:parametric-DCP-extended}
Let $\mathcal{P} \subseteq \E$ be a nonempty closed set such that $0 \notin \mathcal{P}$, and $(T_{p})_{p\in\mathcal{P}}$ be operators on $X$. Suppose there exists a continuous function $\eta: \mathcal{P} \to \R_{++}$ such that for all $p\in\mathcal{P}$, the operator $T_{p}$ is conically $\eta(p)$-averaged. Moreover, suppose that  for all $z \in X$, the mapping $p  \mapsto T_{p}z$ is (strongly) continuous. Then, the operator   \begin{align*}
        D \times \mathcal{P} &\to X\colon(x,p)\mapsto (\Id-T_{p})x
    \end{align*} 
    is parametrically demiclosed. 
\end{corollary}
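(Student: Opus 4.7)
The plan is to reduce the conically averaged case to the nonexpansive case already established in Theorem~\ref{t:edemi}. Recall that, by the equivalent characterisation of conical averagedness, for each $p \in \mathcal{P}$ the assumption that $T_p$ is conically $\eta(p)$-averaged is equivalent to the existence of a nonexpansive operator $N_p$ on $X$ such that $T_p = (1-\eta(p))\Id + \eta(p) N_p$, which I can rearrange as
\begin{equation*}
    N_p = \Id + \frac{1}{\eta(p)}(T_p - \Id), \qquad \Id - T_p = \eta(p)(\Id - N_p).
\end{equation*}
The first identity defines $N_p$ as a canonical nonexpansive companion of $T_p$, and the second identity is the key bridge between the two displacement operators.

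Next, I would verify that the family $(N_p)_{p \in \mathcal{P}}$ satisfies the hypotheses of Theorem~\ref{t:edemi}. Nonexpansiveness of each $N_p$ is immediate from the conical $\eta(p)$-averagedness of $T_p$. For the continuity hypothesis, fixing $z \in X$, the mapping $p \mapsto N_p z = z + \tfrac{1}{\eta(p)}(T_p z - z)$ is strongly continuous because, by assumption, $p \mapsto T_p z$ is strongly continuous and $\eta : \mathcal{P} \to \R_{++}$ is a positive continuous function (so $1/\eta$ is well-defined and continuous). Theorem~\ref{t:edemi} then yields that $(x,p) \mapsto (\Id - N_p)x$ is parametrically demiclosed on $D \times \mathcal{P}$.

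To transfer this to $(\Id - T_p)$, I would take a sequence $(x_k)_\kkk$ in $D$ with $x_k \weak x \in D$, a sequence $(p_k)_\kkk$ in $\mathcal{P}$ with $p_k \to \overline{p} \in \mathcal{P}$, and $(\Id - T_{p_k})x_k \to y$ strongly; I need to conclude that $y = (\Id - T_{\overline{p}})x$. Using the bridge identity,
\begin{equation*}
    (\Id - N_{p_k})x_k = \frac{1}{\eta(p_k)}(\Id - T_{p_k})x_k.
\end{equation*}
Since $\eta(p_k) \to \eta(\overline{p}) > 0$, the right-hand side converges strongly to $\tfrac{1}{\eta(\overline{p})} y$. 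By the already established parametric demiclosedness of $(\Id - N_p)$, we obtain $(\Id - N_{\overline{p}})x = \tfrac{1}{\eta(\overline{p})}y$, and multiplying by $\eta(\overline{p})$ gives $y = \eta(\overline{p})(\Id - N_{\overline{p}})x = (\Id - T_{\overline{p}})x$, which is the desired sequential closedness of $\gra(\Id - T_\bullet)$ in $(X^{\text{weak}} \times \E) \times X^{\text{strong}}$.

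I do not anticipate a significant obstacle; the only delicate point is making sure that positivity and continuity of $\eta$ on the closed set $\mathcal{P}$ guarantees that $1/\eta(p_k) \to 1/\eta(\overline{p})$ along any convergent sequence in $\mathcal{P}$, which is immediate because $\eta(\overline{p}) > 0$. Once this is in place, the conically averaged case reduces cleanly to the nonexpansive case.
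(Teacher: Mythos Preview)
Your proposal is correct and follows essentially the same argument as the paper: write $T_p=(1-\eta(p))\Id+\eta(p)N_p$ with $N_p$ nonexpansive, check that $p\mapsto N_p z$ is continuous, apply Theorem~\ref{t:edemi} to the family $(N_p)$, and transfer the conclusion back via $\Id-T_p=\eta(p)(\Id-N_p)$ together with $\eta(p_k)\to\eta(\overline{p})>0$. The only cosmetic difference is that you give $N_p$ by an explicit formula, whereas the paper invokes its existence and then derives continuity.
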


\begin{proof}
    In view of the definition of conical averagedness, for all $p\in\mathcal{P}$, there exists a nonexpansive operator $N_{p}$ on $X$ such that $T_{p} = (1-\eta(p)) \Id + \eta(p) N_{p}$.  First, we show that for any $z \in X$,  the mapping $p \in\mathcal{P} \mapsto N_{p}z$ is strongly continuous. Let    $p_k \to p \in \mathcal{P}$, then from continuity of $p \mapsto T_{p}z$, it follows that \[(1-\eta(p_k)) z + \eta(p_k) N_{p_k} z = T_{p_k}z \to T_{p}z.\] Furthermore,  continuity of $p \mapsto \eta(p)$ implies  $\eta(p_k) \to \eta(p)$, and thus \[\eta(p_k) N_{p_k} z = T_{p_k}z - (1-\eta(p_k)) z  \to T_{p}z - (1-\eta(p)) z = \eta(p) N_{p}z .\] Since $\eta(p) >0$, then $N_{p_k} z \to N_{p}z$, and thus the claim holds. Next, suppose $z_k \weak z$, $p_k \to p$, and $z_k - T_{p_k}z_k \to u$. Our goal is to show that $z - T_{p}z = u$. Indeed, since  $\eta(p_k)(z_k - N_{p_k} z_k) = z_k - T_{p_k}z_k \to u$ and $0 \notin \eta(\mathcal{P})$, then $ z_{k} - N_{p_{k}}z_{k}\to \frac{u}{\eta(p)}$. From Theorem~\ref{t:edemi}, $(N_{p})_{p\in\mathcal{P}}$ is parametrically demiclosed,   
    then $\frac{1}{\eta(p)}(z - T_pz) =z - N_{p}z  = \frac{u}{\eta(p)}$, from where the conclusion follows. 
    %
\end{proof}

One of the main contributions of \cite{atenas2025relocated} is developing a convergence framework of parametrised fixed-point iterations, called {relocated fixed-point iterations}, without resorting to the classical argument of Fej{\'e}r monotonicity. Instead, it is based on the concept of an \emph{Opial sequence} \cite{arakcheev2025opial}.  A sequence $(x_k)_\kkk$ in $X$ is \emph{Opial} with respect to a nonempty set $C \subseteq X$, if for all $c \in C$, $\lim_\kkk \|x_k - c\|$ exists. This notion  captures essential properties of Fej{\'e}r monotone sequences without requiring monotonicity. Because of the ``lack of structure'' in the definition due to nonmonotonicity, showing that the property holds might be challenging in general. In the following result, we show a practical, constructive way of using this property that fits the algorithmic application in our present work. 

\begin{lemma}\label{l:Opial}
Let $(z_k)_\kkk$ be a sequence in $X$, and $C \subseteq X$ a nonempty set. Suppose for all $c \in C$, there exists a sequence $(c_k)_\kkk$ in $X$ converging to $c$, such that $\lim_\kkk \|z_k - c_k\|$ exists. Then, the following statements are true.
\begin{enumerate}
    \item \label{p:Opial-bounded}
    $(z_k)_\kkk$ is bounded. 
    \item \label{p:Opial-cluster}
    $(z_k)_\kkk$ converges weakly to a point in $C$ if and only if all weak cluster points of $(z_k)_\kkk$ lie in~$C$.
\end{enumerate}
\end{lemma}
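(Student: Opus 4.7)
The plan is to treat the two items in order, with part (i) providing the boundedness needed in part (ii).

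For part (i), I would fix an arbitrary $c \in C$ and pick the corresponding sequence $c_k \to c$ with $\lim_\kkk \|z_k - c_k\|$ existing. Then $(c_k)$ is bounded (as a convergent sequence) and $(\|z_k - c_k\|)$ is bounded (as a convergent real sequence), so the triangle inequality $\|z_k\| \leq \|z_k - c_k\| + \|c_k\|$ gives boundedness of $(z_k)_\kkk$ immediately.

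For part (ii), the \emph{only if} direction is trivial because a weakly convergent sequence has a unique weak cluster point, namely its weak limit. For the \emph{if} direction, the plan is an Opial-style argument. By part (i), $(z_k)_\kkk$ is bounded; since $X$ is Hilbert (hence reflexive), $(z_k)_\kkk$ admits weak cluster points, and by assumption they all lie in $C$. It remains to prove uniqueness of the weak cluster point, as a bounded sequence in a reflexive space with a unique weak cluster point converges weakly to it.

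The key technical step (and the main obstacle) is to convert the existence of $\lim_\kkk \|z_k - c_k\|$ into a statement that resembles the classical Opial property $\lim_\kkk \|z_k - c\|$ exists, despite the fact that here $c_k$ varies. To this end, for each $c \in C$, I would expand
\begin{equation*}
    \|z_k - c_k\|^2 = \|z_k\|^2 - 2\langle z_k, c_k\rangle + \|c_k\|^2
    = \|z_k\|^2 - 2\langle z_k, c\rangle + \|c\|^2 + r_k,
\end{equation*}
where the remainder $r_k := 2\langle z_k, c - c_k\rangle + \|c_k\|^2 - \|c\|^2$ tends to $0$ thanks to $c_k \to c$ and the boundedness of $(z_k)_\kkk$. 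Since the left-hand side converges, so does $\varphi_c(k) := \|z_k\|^2 - 2\langle z_k, c\rangle$, for every $c \in C$.

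Finally, suppose $z$ and $z'$ are two weak cluster points of $(z_k)_\kkk$; by hypothesis both belong to $C$. Subtracting the limits of $\varphi_z$ and $\varphi_{z'}$ shows that $\lim_\kkk \langle z_k, z' - z\rangle$ exists. Passing to subsequences converging weakly to $z$ and $z'$ respectively yields $\langle z, z' - z\rangle = \langle z', z' - z\rangle$, i.e., $\|z - z'\|^2 = 0$, so $z = z'$. This gives uniqueness of the weak cluster point, and thus weak convergence of the full sequence to that common point in $C$.
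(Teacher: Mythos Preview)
Your proof is correct, but the paper takes a shorter route. The paper observes, via the reverse and forward triangle inequalities,
\[
\|z_k-c_k\| - \|c_k-c\| \le \|z_k-c\| \le \|z_k-c_k\| + \|c_k-c\|,
\]
so since $c_k\to c$ and $\lim_\kkk\|z_k-c_k\|$ exists, the squeeze gives $\lim_\kkk\|z_k-c\|=\lim_\kkk\|z_k-c_k\|$ directly. In other words, $(z_k)_\kkk$ is an Opial sequence with respect to $C$ in the sense of \cite{arakcheev2025opial}, and both \ref{p:Opial-bounded} and \ref{p:Opial-cluster} are then quoted from that reference. Your argument instead expands $\|z_k-c_k\|^2$ and tracks the remainder $r_k$ to conclude that $\|z_k\|^2-2\langle z_k,c\rangle$ converges for each $c\in C$, then runs the classical two-cluster-point uniqueness computation by hand. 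This is a genuinely different (and more self-contained) path: you never explicitly establish that $\lim_\kkk\|z_k-c\|$ exists and you avoid the external citation, at the cost of a slightly longer computation. Note that had you applied the sandwich above first, your expansion step would become unnecessary, since $\varphi_c(k)=\|z_k-c\|^2-\|c\|^2$ would converge immediately.
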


\begin{proof}
    Using the triangle inequality, it holds for all $\kkk$, \begin{equation*}
     \|z_k-c_k\| - \|c_k-c\| \le \|z_k-c\| \le \|z_k-c_k\| +  \|c_k-c\|.
\end{equation*} Taking the limit as $k \to +\infty$, it follows that $\lim_\kkk \|z_k-c\| = \lim_\kkk \|z_k-c_k\|$, and thus $(z_k)_{\kkk}$ is Opial with respect to $C$. The results follow from \cite[Proposition 2.1, Corollary 2.3, Lemma 5.2]{arakcheev2025opial}.
\end{proof}

We conclude this section by recalling a technical lemma to be used in the next section.

\begin{lemma}[Robbins--Siegmund, {\cite[Section 2.2.2, Lemma~11]{polyak1987introduction}}]
\label{f:RS}
Let $(\alpha_k)_\kkk,(\beta_k)_\kkk,(\varepsilon_k)_\kkk $ be sequences in $\R_{++}$
such that 
$\sum_\kkk\varepsilon_k<+\infty$ and for all $\kkk,\; \alpha_{k+1} \leq 
(1+\varepsilon_k)\alpha_k -\beta_k.$ Then $(\alpha_k)_\kkk$ converges and 
$\sum_\kkk \beta_k<+\infty$.
    
\end{lemma}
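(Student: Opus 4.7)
The plan is to reduce the statement to the elementary fact that a bounded monotone sequence of real numbers converges, by rescaling the recursion so that the multiplicative factor $1+\varepsilon_k$ disappears. The starting observation is that, since $\sum_\kkk \varepsilon_k < +\infty$ and $1+\varepsilon_k \leq e^{\varepsilon_k}$, the partial products
\[
P_k := \prod_{j=0}^{k-1}(1+\varepsilon_j), \quad P_0 := 1,
\]
are non-decreasing and bounded above by $\exp\bigl(\sum_\kkk \varepsilon_k\bigr)$, hence converge to some finite limit $P_\infty \in [1,+\infty)$.

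Dividing the hypothesis $\alpha_{k+1} \leq (1+\varepsilon_k)\alpha_k - \beta_k$ through by $P_{k+1} = (1+\varepsilon_k)P_k$ and setting $\gamma_k := \alpha_k/P_k$, I would obtain the clean form $\gamma_{k+1} \leq \gamma_k - \beta_k/P_{k+1}$. Since $\beta_k \geq 0$, the sequence $(\gamma_k)_\kkk$ is non-increasing, and being bounded below by $0$, it converges to some $\gamma_\infty \in [0,\gamma_0]$.

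Telescoping the rescaled inequality yields $\sum_{j=0}^{k-1} \beta_j/P_{j+1} \leq \gamma_0 - \gamma_k$, and passing to the limit gives $\sum_\kkk \beta_k/P_{k+1} \leq \gamma_0 - \gamma_\infty < +\infty$. Applying the crude bound $P_{k+1} \leq P_\infty$ then delivers $\sum_\kkk \beta_k \leq P_\infty(\gamma_0 - \gamma_\infty) < +\infty$, which is the summability assertion. Finally, writing $\alpha_k = P_k\gamma_k$ and combining the limits of both factors gives $\alpha_k \to P_\infty \gamma_\infty$, which is the convergence assertion.

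The argument is entirely elementary, so I would not expect any real obstacle. The only place requiring a small amount of care is the verification that $(P_k)_\kkk$ has a finite \emph{positive} limit, since this is precisely what makes the rescaling well defined and what upgrades summability of $\beta_k/P_{k+1}$ to summability of $\beta_k$ itself.
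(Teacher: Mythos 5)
Your argument is correct and complete: rescaling by the convergent partial products $P_k=\prod_{j<k}(1+\varepsilon_j)$ turns the recursion into a monotone, bounded sequence, and the telescoping plus the bound $P_{k+1}\leq P_\infty$ gives both conclusions. The paper itself does not prove this lemma (it is imported from Polyak's book), and your rescaling proof is precisely the standard argument for the deterministic Robbins--Siegmund lemma, so nothing further is needed.
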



\section{Relocated fixed-point iterations of conically averaged operators} \label{s:relocated-conic}

In this section, we present the framework of relocated fixed-point iterations for conically averaged operators. We start by defining the concept of a fixed-point relocator. Given a family of operators $(T_\gamma)_{\gamma \in \R_{++}}$,   an operator is a fixed-point relocator if, in particular, it transforms (relocates) fixed-points of $T_\gamma$ to fixed-points of $T_\delta$ for all $\gamma,\delta \in \R_{++}$. This property is crucial to guide the convergence of relocated fixed-point iterations in the form of \eqref{e:reloc-iterations}. 

\begin{definition}[Fixed-point relocator] \label{d:FPR}
Let $\Gamma \subseteq \R_{++}$ be a nonempty set, and let $(T_\gamma)_{\gamma \in \Gamma}$ be a family of operators on $X$. We say that the family of operators $(Q_{\delta\gets \gamma})_{\gamma,\delta \in \Gamma}$ on $X$ defines \emph{\Qname{s}} for $(T_\gamma)_{\gamma \in \Gamma}$ with Lipschitz constants $(\Lip_{\delta\gets \gamma})_{\gamma,\delta \in \Gamma}$ in $[1,+\infty[$ if the following hold.
\begin{enumerate}
\item\label{a:trans_bijection}
For all $ \gamma, \delta \in \Gamma$, 
$Q_{\delta\gets \gamma}|_{\Fix T_\gamma}$ is a bijection from $\Fix T_\gamma$ to $\Fix T_\delta$.
\item \label{a:trans_cont} For all $ \gamma \in \Gamma, \; x\in\Fix T_\gamma$, $\Gamma \ni \delta\mapsto Q_{\delta\gets \gamma}x \in X$ is continuous. 
\item\label{a:trans_semigroup}
For all $ \gamma, \delta, \varepsilon \in \Gamma, \; x \in \Fix T_{\gamma}$, $Q_{\varepsilon\gets \delta}Q_{\delta\gets \gamma}x = Q_{\varepsilon\gets \gamma}x$.
\item\label{a:trans_nonex-type}
For all $ \gamma, \delta \in \Gamma$, $Q_{\delta\gets \gamma}$ is $\Lip_{\delta\gets \gamma}$-Lipschitz continuous.
\end{enumerate}
\end{definition}

\begin{remark}[Consequences of the \Qname{} definition] \label{r:FPR-inverse}
    In view of \cite[Remark 4.3]{atenas2025relocated}, given \Qname{s} $(Q_{\delta\gets \gamma})_{\gamma,\delta \in \Gamma}$ for $(T_\gamma)_{\gamma\in\R_{++}}$, the following holds for all $\delta,\gamma \in \R_{++}$, \begin{equation*}
        Q_{\delta\gets\gamma} = \Id \text{~on~} \Fix T_{\gamma}, \text{~and~} (Q_{\delta\gets \gamma}|_{\Fix T_\gamma})^{-1} = Q_{\gamma\gets\delta}|_{\Fix T_\delta}.
    \end{equation*} 
\end{remark}

\begin{remark}[Multiple fixed-point relocators] \label{r:FPR} As discussed in \cite[Remark 4.2]{atenas2025relocated}, given  \Qname{s} $(Q_{\delta\gets \gamma})_{\gamma,\delta\in\R_{++}}$ for $(T_\gamma)_{\gamma\in\R_{++}}$, observe that Definition~\ref{d:FPR}\ref{a:trans_bijection}--\ref{a:trans_semigroup} only need to hold on $\Fix T_\gamma$. Therefore, any Lipschitz continuous operator $\tilde{Q}_{\delta\gets \gamma}$  such that $Q_{\delta\gets \gamma}|_{\Fix T_\gamma} = \tilde{Q}_{\delta\gets \gamma}|_{\Fix T_\gamma}$ also defines a \Qname~for $(T_\gamma)_{\gamma\in\R_{++}}$.
\end{remark}

In the following, we introduce the sequence $(c_k)_\kkk$ constructed in \cite[Theorem 4.5]{atenas2025relocated}, which combined with Lemma~\ref{l:Opial} is the basis of the convergence analysis of the relocated fixed-point iterations of conically averaged operators.

\begin{lemma} \label{l:aux-seq}

Let $\Theta,\Gamma \subseteq \R_{++}$ be nonempty, and $(T_{\theta,\gamma})_{(\theta,\gamma)\in \Theta \times \Gamma}$ be a family of operators on $X$. Let $\Bar{\theta} \in \Theta$, such that for all $(\theta,\gamma) \in \Theta \times \Gamma$, $\Fix T_{\theta,\gamma} = \Fix T_{\Bar{\theta},\gamma} \neq\varnothing$. Let $(Q_{\delta\gets \gamma})_{\gamma,\delta \in \Gamma}$ be operators on $X$ satisfying Definition~\ref{d:FPR}\ref{a:trans_bijection}-\ref{a:trans_semigroup} for $(T_{\Bar{\theta},\gamma})_{\gamma\in \Gamma}$. Let $(\theta_k)_\kkk $ be a sequence in $\Theta$, and  $(\gamma_k)_\kkk $ be a sequence in $\Gamma$ that converges to $\overline{\gamma} \in \Gamma$. Choose  $c_0 \in \Fix T_{\theta_0,\gamma_0}$, and for all $\kkk$, set $$c_{k+1} := Q_{\gamma_{k+1}\gets\gamma_k} c_k.$$ Then for all $\kkk$, $c_k \in \Fix T_{\theta_k,\gamma_k}$ and $(c_k)_\kkk$ converges strongly to $Q_{\overline{\gamma}\gets\gamma_0} c_0 \in \Fix T_{\Bar{\theta},\Bar{\gamma}}$. 
\end{lemma}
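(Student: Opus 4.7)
The plan is to prove both assertions by peeling off the role of $\theta_k$ via the hypothesis $\Fix T_{\theta_k,\gamma_k}=\Fix T_{\bar\theta,\gamma_k}$, so that everything reduces to an iteration of the \Qname{s} on the common fixed-point sets of $(T_{\bar\theta,\gamma})_{\gamma\in\Gamma}$. Once this is done, the three properties in Definition~\ref{d:FPR}\ref{a:trans_bijection}--\ref{a:trans_semigroup} do essentially all the work; no Fej\'er-type argument is needed here.

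First, I would establish by induction on $k$ that $c_k\in\Fix T_{\bar\theta,\gamma_k}$ (and hence $c_k\in\Fix T_{\theta_k,\gamma_k}$). The base case holds by the choice $c_0\in\Fix T_{\theta_0,\gamma_0}=\Fix T_{\bar\theta,\gamma_0}$. For the inductive step, assuming $c_k\in\Fix T_{\bar\theta,\gamma_k}$, property~\ref{a:trans_bijection} applied to the family $(T_{\bar\theta,\gamma})_{\gamma\in\Gamma}$ gives that $Q_{\gamma_{k+1}\gets\gamma_k}$ maps $\Fix T_{\bar\theta,\gamma_k}$ into $\Fix T_{\bar\theta,\gamma_{k+1}}$, so $c_{k+1}=Q_{\gamma_{k+1}\gets\gamma_k}c_k\in\Fix T_{\bar\theta,\gamma_{k+1}}=\Fix T_{\theta_{k+1},\gamma_{k+1}}$.

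Next, I would show by a second induction that $c_k=Q_{\gamma_k\gets\gamma_0}c_0$. For $k=0$, the semigroup identity~\ref{a:trans_semigroup} with $\gamma=\delta=\varepsilon=\gamma_0$ and the bijectivity on $\Fix T_{\bar\theta,\gamma_0}$ from~\ref{a:trans_bijection} force $Q_{\gamma_0\gets\gamma_0}|_{\Fix T_{\bar\theta,\gamma_0}}=\Id$, so $c_0=Q_{\gamma_0\gets\gamma_0}c_0$. For the inductive step, using that $c_0\in\Fix T_{\bar\theta,\gamma_0}$ and applying~\ref{a:trans_semigroup} with $(\gamma,\delta,\varepsilon)=(\gamma_0,\gamma_k,\gamma_{k+1})$,
\begin{equation*}
    c_{k+1} \;=\; Q_{\gamma_{k+1}\gets\gamma_k}c_k \;=\; Q_{\gamma_{k+1}\gets\gamma_k}Q_{\gamma_k\gets\gamma_0}c_0 \;=\; Q_{\gamma_{k+1}\gets\gamma_0}c_0.
\end{equation*}

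Finally, to conclude strong convergence, I would invoke the continuity property~\ref{a:trans_cont} for the fixed point $c_0\in\Fix T_{\bar\theta,\gamma_0}$: the mapping $\Gamma\ni\delta\mapsto Q_{\delta\gets\gamma_0}c_0$ is (strongly) continuous, so $\gamma_k\to\bar\gamma$ yields $c_k=Q_{\gamma_k\gets\gamma_0}c_0\to Q_{\bar\gamma\gets\gamma_0}c_0$; and membership of the limit in $\Fix T_{\bar\theta,\bar\gamma}$ is immediate from property~\ref{a:trans_bijection}. I do not anticipate a genuine obstacle here; the only subtle point is remembering to use the hypothesis $\Fix T_{\theta,\gamma}=\Fix T_{\bar\theta,\gamma}$ to legitimately invoke the \Qname{} axioms, which are postulated only for the family $(T_{\bar\theta,\gamma})_{\gamma\in\Gamma}$.
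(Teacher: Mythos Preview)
Your proposal is correct and follows essentially the same approach as the paper: induction via property~\ref{a:trans_bijection} to keep $c_k$ in the appropriate fixed-point set, the semigroup property~\ref{a:trans_semigroup} to collapse the composition to $Q_{\gamma_k\gets\gamma_0}c_0$, and continuity~\ref{a:trans_cont} to pass to the limit. Your treatment of the base case $c_0=Q_{\gamma_0\gets\gamma_0}c_0$ is slightly more explicit than the paper's, but otherwise the arguments coincide.
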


\begin{proof}
    We first argue by induction. The base case follows from construction. 
    Suppose for $\kkk$, $c_k \in \Fix T_{\theta_k,\gamma_k}$, and thus $c_k \in \Fix T_{\Bar{\theta},\gamma_k}$. Then, from Definition~\ref{d:FPR}\ref{a:trans_bijection}, $c_{k+1} = Q_{\gamma_{k+1}\gets\gamma_k} c_k \in \Fix T_{\Bar{\theta},\gamma_{k+1}} = \Fix T_{\theta_{k+1},\gamma_{k+1}}$. Hence, the first claim holds. Next, for $\kkk$, Definition~\ref{d:FPR}\ref{a:trans_semigroup} yields \begin{equation*}
        c_k = Q_{\gamma_k\gets\gamma_{k-1}}c_{k-1} = Q_{\gamma_k\gets\gamma_{k-1}}Q_{\gamma_{k-1}\gets\gamma_{k-2}} \dots Q_{\gamma_1\gets\gamma_0}c_0 = Q_{\gamma_k\gets\gamma_0}c_0.
    \end{equation*} Using Definition~\ref{d:FPR}\ref{a:trans_cont}, it follows that $c_k \to Q_{\Bar{\gamma}\gets\gamma_0}c_0$, and since $c_0 \in \Fix T_{\Bar{\theta},\gamma_0}$,  Definition~\ref{d:FPR}\ref{a:trans_bijection}  yields $Q_{\Bar{\gamma}\gets\gamma_0}c_0 \in\Fix T_{\Bar{\theta},\Bar{\gamma}}$.
\end{proof}

In the following, we present the main result of this section that establishes convergence of relocated fixed-point iterations of conically quasiaveraged operators (cf. \cite[Theorem 4.5]{atenas2025relocated}).

\begin{theorem}[Convergence of relocated fixed-point iterations for conically averaged operators] \label{t:FPR-convergence}

    Let $\Theta, \Gamma \subseteq \R_{++}$ be nonempty closed sets such that $0 \notin \Theta$, $0\notin \Gamma$. For all $(\theta,\gamma) \in \Theta \times \Gamma$, let $T_{\theta,\gamma}$ be a conically $\eta(\theta,\gamma)$-averaged operator for some $\eta(\theta,\gamma) >0$, such that $0\notin\eta(\Theta\times\Gamma)$,  and for some $\Bar{\theta} \in \Theta$, for all $\gamma \in \Gamma$, $\Fix T_{\theta,\gamma} = \Fix T_{\Bar{\theta},\gamma} \neq\varnothing$. Let $(Q_{\delta\gets \gamma})_{\gamma,\delta \in \Gamma}$ be  \Qname{s} for $(T_{\Bar{\theta},\gamma})_{\gamma \in \Gamma}$ with Lipschitz constant $(\Lip_{\delta\gets \gamma})_{\gamma,\delta \in \Gamma}$ in $[1,+\infty[$. Furthermore, let \begin{equation}
\label{a:stepsize-series}
    (\gamma_k)_\kkk \subseteq \Gamma \text{~be a sequence that 
converges to~} \overline{\gamma}\in \Gamma
\text{~and~}   \sum_\kkk (\Lip_{\gamma_{k+1}\gets \gamma_k} -1) < +\infty.
\end{equation} Given a sequence $(\lambda_k)_\kkk $ in $\R_{++}$, for all $\kkk$, let $\eta_k := \eta(\theta_k,\gamma_k)$ be the parameter of conical averagedness of the operator $T_{\theta_k,\gamma_k}$, such that $\liminf_{\kkk}\eta_k^{-1}\lambda_k (1 - \eta_k\lambda_k)> 0$. Let $z_0 \in X$ and generate  $(z_k)_\kkk$ as follows: for all $  \kkk$, set
\begin{equation} \label{e:z-iteration}
z_{k+1} := Q_{\gamma_{k+1}\gets \gamma_k}\big((1-\lambda_k) z_k + \lambda_k T_{\theta_k,\gamma_k}x_z).
\end{equation}

Then, the following hold.

\begin{enumerate}
    
    \item \label{t:FPR-convergence-ii} The sequence $(z_k)_\kkk$ is bounded, and 
    $(z_k-T_{\theta_k,\gamma_k}z_k)_\kkk$ converges strongly to zero. 

    \item \label{t:FPR-convergence-iii} In addition, if for all $z \in X$,  $\Theta \times \Gamma \ni (\theta,\gamma) \mapsto T_{\theta,\gamma}z$ is (strongly) continuous, $(\theta_k)_\kkk$ is bounded and separated from zero, and $\liminf_\kkk \eta_k>0$, then both $(z_k)_\kkk$ and $(T_{\theta_k,\gamma_k}z_k)_\kkk$ converge weakly to the same point in $\Fix T_{\Bar{\theta},\overline{\gamma}}$.
\end{enumerate}
\end{theorem}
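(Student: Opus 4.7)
The plan is to sidestep Fej\'er monotonicity---which generally fails when the stepsize varies---and instead verify the Opial-type hypothesis of Lemma~\ref{l:Opial} via a Robbins--Siegmund recursion, closely mirroring \cite[Theorem~4.5]{atenas2025relocated} but replacing the averaged-nonexpansive arithmetic by the conically averaged calculus. For each fixed point $c \in \Fix T_{\bar\theta,\bar\gamma}$, I will produce a companion sequence $(c_k)_\kkk$ converging strongly to $c$ such that $\lim_\kkk \|z_k - c_k\|$ exists, and then extract both conclusions from Lemma~\ref{l:Opial}. Definition~\ref{d:FPR}\ref{a:trans_bijection} makes $Q_{\bar\gamma\gets\gamma_0}$ a bijection from $\Fix T_{\bar\theta,\gamma_0}$ onto $\Fix T_{\bar\theta,\bar\gamma}$, so I set $c_0 := Q_{\gamma_0\gets\bar\gamma} c$ and iterate $c_{k+1} := Q_{\gamma_{k+1}\gets\gamma_k} c_k$; Lemma~\ref{l:aux-seq} then guarantees $c_k \in \Fix T_{\theta_k,\gamma_k}$ and $c_k \to c$.

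The core of the argument is a single quadratic estimate. Writing $u_k := (1-\lambda_k) z_k + \lambda_k T_{\theta_k,\gamma_k} z_k$ and expanding $\|u_k - c_k\|^2$ into $\|z_k-c_k\|^2$, a cross term, and $\lambda_k^2\|T_{\theta_k,\gamma_k}z_k-z_k\|^2$, I would apply the defining inequality of conical $\eta_k$-averagedness of $T_{\theta_k,\gamma_k}$ at the pair $(z_k, c_k)$; since $T_{\theta_k,\gamma_k}c_k = c_k$, it rearranges to $2\lr{z_k - c_k,\, T_{\theta_k,\gamma_k}z_k - z_k} \leq -\eta_k^{-1}\|T_{\theta_k,\gamma_k}z_k - z_k\|^2$, yielding
\[
\|u_k - c_k\|^2 \leq \|z_k - c_k\|^2 - \eta_k^{-1}\lambda_k\bigl(1 - \eta_k\lambda_k\bigr)\|T_{\theta_k,\gamma_k}z_k - z_k\|^2.
\]
Combining this with the Lipschitz bound of Definition~\ref{d:FPR}\ref{a:trans_nonex-type} applied to $Q_{\gamma_{k+1}\gets\gamma_k}$ produces a recursion $\|z_{k+1}-c_{k+1}\|^2 \leq (1+\varepsilon_k)\|z_k-c_k\|^2 - \rho_k\|T_{\theta_k,\gamma_k}z_k - z_k\|^2$, where $\varepsilon_k := \Lip_{\gamma_{k+1}\gets\gamma_k}^2 - 1$ is summable because $\Lip_{\gamma_{k+1}\gets\gamma_k} \to 1$ and $\sum_\kkk(\Lip_{\gamma_{k+1}\gets\gamma_k}-1) < +\infty$ by \eqref{a:stepsize-series}, while $\liminf_\kkk \rho_k > 0$ by hypothesis. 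Lemma~\ref{f:RS} then delivers both the existence of $\lim_\kkk \|z_k - c_k\|^2$ and the summability $\sum_\kkk \rho_k\|T_{\theta_k,\gamma_k}z_k - z_k\|^2 < +\infty$.

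Part~\ref{t:FPR-convergence-ii} follows at once: Lemma~\ref{l:Opial}\ref{p:Opial-bounded} gives boundedness of $(z_k)_\kkk$, and the summability combined with $\liminf_\kkk \rho_k > 0$ forces $\|z_k - T_{\theta_k,\gamma_k}z_k\| \to 0$. For part~\ref{t:FPR-convergence-iii}, let $z^*$ be any weak cluster point of $(z_k)_\kkk$ along a subsequence; boundedness-and-separation of $(\theta_k)$ together with closedness of $\Theta$ allows extracting a further subsequence with $\theta_{k_j} \to \bar\theta' \in \Theta$, and I would then invoke Corollary~\ref{l:parametric-DCP-extended} on the closed parameter set formed by the subsequence together with its limit, using the strong continuity of $(\theta,\gamma) \mapsto T_{\theta,\gamma}z$ and $\liminf_\kkk \eta_k > 0$ to place the averagedness parameters in a bounded-below regime. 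This gives $z^* \in \Fix T_{\bar\theta',\bar\gamma} = \Fix T_{\bar\theta,\bar\gamma}$ by the blanket hypothesis on $\Fix T_{\theta,\gamma}$; Lemma~\ref{l:Opial}\ref{p:Opial-cluster} then upgrades this to weak convergence of $(z_k)_\kkk$, and $T_{\theta_k,\gamma_k} z_k \weak z^*$ is immediate from $z_k - T_{\theta_k,\gamma_k}z_k \to 0$.

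The main technical obstacle is producing the precise coefficient $\eta_k^{-1}\lambda_k(1-\eta_k\lambda_k)$ from the conical-averagedness inequality, because this is exactly the quantity the stepsize/relaxation hypothesis is calibrated to control; the Lipschitz-inflation factor from $Q_{\gamma_{k+1}\gets\gamma_k}$ must then be absorbed cleanly into the Robbins--Siegmund $\varepsilon_k$ without contaminating $\rho_k$. A secondary, but nontrivial, difficulty is lining up the hypotheses of the parametric demiclosedness principle (Corollary~\ref{l:parametric-DCP-extended}) in the conically averaged setting of part~\ref{t:FPR-convergence-iii}: the averagedness parameter $\eta_k$ must remain bounded below so that the associated nonexpansive counterparts $N_{\theta_k,\gamma_k}z_k - z_k$ still vanish, which is precisely the role played by the assumption $\liminf_\kkk \eta_k > 0$.
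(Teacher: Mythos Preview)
Your proposal is correct and follows essentially the same route as the paper: build the companion sequence $(c_k)$ via Lemma~\ref{l:aux-seq}, derive the quadratic recursion from conical averagedness plus the Lipschitz bound on $Q_{\gamma_{k+1}\gets\gamma_k}$, feed it into Robbins--Siegmund, and conclude via Lemma~\ref{l:Opial} and Corollary~\ref{l:parametric-DCP-extended}. The only cosmetic differences are that the paper expands $\|u_k-c_k\|^2$ through the convex-combination identity rather than your cross-term manipulation (both give the same coefficient $\eta_k^{-1}\lambda_k(1-\eta_k\lambda_k)$), and the paper applies Lemma~\ref{f:RS} to $\alpha_k=\|z_k-c_k\|$ with $\varepsilon_k=\Lip_{\gamma_{k+1}\gets\gamma_k}-1$ and $\beta_k=0$, extracting $z_k-T_{\theta_k,\gamma_k}z_k\to 0$ afterwards from the quadratic inequality, whereas you go straight to $\alpha_k=\|z_k-c_k\|^2$ with $\varepsilon_k=\Lip_{\gamma_{k+1}\gets\gamma_k}^2-1$ and nonzero $\beta_k$; your summability claim for $\varepsilon_k$ is fine since $\Lip_{\gamma_{k+1}\gets\gamma_k}^2-1=(\Lip_{\gamma_{k+1}\gets\gamma_k}-1)(\Lip_{\gamma_{k+1}\gets\gamma_k}+1)$ and the second factor is bounded.
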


\begin{proof}
    Let $(c_k)_\kkk$ be the  sequence defined in Lemma~\ref{l:aux-seq}. For all $\kkk$, using Definition~\ref{d:FPR}\ref{a:trans_nonex-type}, $c_k \in \Fix T_{\theta_k,\gamma_k}$, and conical $\eta_k$-averagedness of $T_{\theta_k,\gamma_k}$, the following holds \begin{align*}
    & \|z_{k+1} - c_{k+1}\|^2\\ 
    & = \| Q_{\gamma_{k+1}\gets \gamma_k}\big((1-\lambda_k) z_k + \lambda_k T_{\theta_k,\gamma_k}z_k) - Q_{\gamma_{k+1}\gets\gamma_k} c_k\|^2\\
    & \leq \Lip_{\gamma_{k+1}\gets\gamma_k}^2\|(1-\lambda_k) z_k + \lambda_k T_{\theta_k,\gamma_k}z_k - c_k\|^2\\
    & = \Lip_{\gamma_{k+1}\gets\gamma_k}^2\|(1-\lambda_k) (z_k - c_k) + \lambda_k (T_{\theta_k,\gamma_k}x_k - c_k) \|^2\\
    & = \Lip_{\gamma_{k+1}\gets\gamma_k}^2\big((1-\lambda_k)\|z_k - c_k \|^2 + \lambda_k \|T_{\theta_k,\gamma_k}z_k - c_k \|^2 - \lambda_k(1-\lambda_k)\|z_k - T_{\theta_k,\gamma_k}z_k\|^2\big)\\
     &\leq \Lip_{\gamma_{k+1}\gets\gamma_k}^2\big((1-\lambda_k)\|z_k - c_k \|^2 + \lambda_k \left(\|z_k - c_k\|^2 -\frac{1 - \eta_k}{\eta_k}\|T_{\theta_k,\gamma_k}z_k - z_k\|^2\right)\\ & \quad  - \lambda_k((1-\lambda_k)\|z_k - T_{\theta_k,\gamma_k}z_k\|^2\big)\\
     &= \Lip_{\gamma_{k+1}\gets\gamma_k}^2\|z_k - c_k\|^2 - \Lip_{\gamma_{k+1}\gets\gamma_k}^2\lambda_k\frac{1 - \eta_k\lambda_k}{\eta_k}\|  z_k - T_{\theta_k,\gamma_k}z_k\|^2.
\end{align*} 
Since $\liminf_\kkk \lambda_k\frac{1 - \eta_k\lambda_k}{\eta_k} >0$, there exists $k_0 \geq 1$, such that for all $k \geq k_0$, $1 - \eta_k\lambda_k > 0$. Hence, for $k \geq k_0$, we have
\begin{equation*}
    \|z_{k+1} - c_{k+1}\| \leq   \Lip_{\gamma_{k+1}\gets\gamma_k}\|z_k - c_k\|. 
\end{equation*} 
Using Lemma~\ref{f:RS} with $\alpha_k = \|z_k - c_k\|$, $ \varepsilon_k = \Lip_{\gamma_{k+1}\gets\gamma_k} -1$ and $\beta_k = 0$, 
 we have that $(\|z_k - c_k\|)_\kkk$ converges. In view of Lemma~\ref{l:Opial}\ref{p:Opial-bounded} and Lemma~\ref{l:aux-seq}, $(z_k)_\kkk$ is bounded, and thus its set of cluster points is nonempty. 
 Furthermore, since for all $k \geq k_0$, \begin{equation*}
    \Lip_{\gamma_{k+1}\gets\gamma_k}^2\lambda_k\frac{1 - \eta_k\lambda_k}{\eta_k}\| z_k - T_{\theta_k,\gamma_k}z_k\|^2  \leq   \Lip_{\gamma_{k+1}\gets\gamma_k}^2\|z_k - c_k\|^2 - \|z_{k+1} - c_{k+1}\|^2,
\end{equation*} taking the limit as $k \to +\infty$, as $\Lip_{\gamma_{k+1}\gets\gamma_k} \to 1$ and $\liminf_\kkk \lambda_k\frac{1 - \eta_k\lambda_k}{\eta_k} >0$, then $z_k - T_{\theta_k,\gamma_k}z_k  \to 0$, which proves \ref{t:FPR-convergence-ii}. In particular, $(z_k)_\kkk$ and $(T_{\gamma_k}z_k)_\kkk$ have the same weak cluster points. Now suppose $(\theta,\gamma) \mapsto T_{\theta,\gamma}z$ is (strongly) continuous. In view of Corollary~\ref{l:parametric-DCP-extended} with $\mathcal{P} = \Theta \times \Gamma$,   any cluster point of $(z_k)_\kkk$ lies in $\Fix T_{\Bar{\theta},\Bar{\gamma}}$. We conclude that \ref{t:FPR-convergence-iii} holds from Lemma~\ref{l:Opial}\ref{p:Opial-cluster} and Lemma~\ref{l:aux-seq}. 
%
%
%
%
\end{proof}

\begin{remark}[Complexity estimates of the residual] In the context of the proof above, observe that if we assume the stricter condition $\sum_\kkk(\Lip_{\gamma_{k+1}\gets \gamma_k}^2 -1) < +\infty $, then from \begin{equation*}
    \|z_{k+1} - c_{k+1}\|^2 \leq \Lip_{\gamma_{k+1}\gets\gamma_k}^2\|z_k - c_k\|^2 - \Lip_{\gamma_{k+1}\gets\gamma_k}^2\lambda_k\frac{1 - \eta_k\lambda_k}{\eta_k}\|T_{\theta_k,\gamma_k}z_k - z_k\|^2
\end{equation*} and using Lemma~\ref{f:RS} with $\alpha_k = \|z_k - c_k\|^2$, $ \varepsilon_k = \Lip_{\gamma_{k+1}\gets\gamma_k}^2 -1$ and $\beta_k = \Lip_{\gamma_{k+1}\gets\gamma_k}\eta_k^{-1}\lambda_k (1 - \eta_k\lambda_k)\|T_{\theta_k,\gamma_k}z_k - z_k\|^2$ for all $k \geq k_0$, then \begin{equation*}
    \sum_\kkk \Lip_{\gamma_{k+1}\gets\gamma_k}^2\lambda_k\frac{1 - \eta_k\lambda_k}{\eta_k}\|T_{\theta_k,\gamma_k}z_k - z_k\|^2 < +\infty.
\end{equation*} Since $\Lip_{\gamma_{k+1}\gets\gamma_k} \geq 1$ and $\liminf_\kkk \lambda_k\frac{1 - \eta_k\lambda_k}{\eta_k} >0$, we obtain $\sum_\kkk \|T_{\theta_k,\gamma_k}z_k - z_k\|^2 < +\infty$. By virtue of \cite[Lemma 1(1)(a)]{davis2017convergence}, \begin{equation*}
    \min_{i=0,\dots,k}\|(\Id-T_{\theta_i,\gamma_i})x_i\|^2 = o\left(\frac{1}{k+1}\right).
\end{equation*} Furthermore, from convexity of the squared norm and boundedness of  $(\sum_{i=0}^k\| T_{\gamma_i}x_i - x_i\|^2)_{\kkk}$,  we get an ergodic rate of convergence for the residuals \begin{equation*}
    \left\|\frac{1}{k+1}\sum_{i=0}^k (\Id-T_{\theta_i,\gamma_i})x_i\right\| = O\left(\frac{1}{\sqrt{k+1}}\right).
\end{equation*}
 For a further analysis of the rates of convergence of relocated fixed-point iterations under the assumption of error bounds, we refer to \cite{atenas2025linear}.


\end{remark}


\section{Variable stepsize  distributed forward-backward methods} \label{s:FB}

Given $n \geq 2$ and $p \geq 1$, consider $n$ maximally monotone operators $A_1, \dots, A_n$ on $X$, and $p$ $\beta$-cocoercive operators $B_1, \dots, B_p$ on $X$. Our goal is to devise variable stepsize splitting methods to solve the  monotone inclusion problem in \eqref{e:distributed-monotone-inclusion}. In this section, we apply the relocated fixed-point iteration theory of Section~\ref{s:relocated-conic} to analyse relocated distributed splitting algorithms of forward-backward type. Following the framework introduced in \cite{dao2025general},  distributed forward-backward splitting algorithms are constructed using matrices that comply with the following assumptions.

\begin{assumption}
\label{a:stand}
The coefficient matrices $D = \diag(d_1,\dots, d_n)\in\mathbb{R}^{n\times n}$, $M \in \mathbb{R}^{n \times m}$, $N \in \mathbb{R}^{n \times n}$, $P\in\mathbb{R}^{n\times p}$, and $R \in \mathbb{R}^{p\times n}$ satisfy the following properties.
\begin{enumerate}
 \item\label{a:stand_kerM}
$\ker(M^*) = \mathbb{R}\bone$.
\item\label{a:stand_P}
$P^*\bone=\bone$.
\item\label{a:stand_R}
$R\bone=\bone$.
\item\label{a:stand_neg_semidef}
$2D - N - N^* - MM^*  \succeq 0$.
\item\label{a:stand_N}
$\sum_{i,j=1}^n N_{ij} = \sum_{i=1}^n d_i$.
\item \label{a:stand_triangular} $N$ and $P$ are 
lower triangular matrices with zeros on the diagonal, and $R$ is a lower triangular matrix.
\end{enumerate}
\end{assumption}

Recall that for $A \in \{M,N, P, R\}$, we use the notation $\mathbf{A} = A \otimes \Id$. In this manner, the distributed forward-backward iteration operator is given by: for all $\theta,\gamma \in \R_{++}$, \begin{equation} \label{e:FB-operator} 
    \begin{aligned}
    T_{\theta, \gamma}: \; X^{m} &\to X^{m} \\
     \bz &\mapsto \bz - \theta \bM^* \bx^\gamma(\bz), 
\end{aligned} 
\end{equation} where the mapping $\bx^\gamma=(\bx_1^\gamma, \dots, \bx_n^\gamma): X^m \to X^n$ is defined, for all $\bz \in X^m$, as
\begin{equation} \label{e:FB-resolvent}
    \left\{\begin{aligned}
        \bx_1^\gamma(\bz) & := J_{\frac{\gamma}{d_1}A_1}\left(\frac{1}{d_1}(\bM\bz)_1\right)\\
        \bx_i^\gamma(\bz) &:= J_{\frac{\gamma}{d_i}A_i}\left( \frac{1}{d_i}(\bM\bz)_i + \frac{1}{d_i}(\bN\bx^\gamma(\bz))_{\le i-1} - \frac{\gamma}{d_i}\big(\bP \bB \bR \bx^\gamma(\bz)\big)_{\le \min\{i-1,p\}} \right) \text{~for~} i=2, \dots, n.
    \end{aligned}\right.
\end{equation} where for all $i = 1,\dots, n-1$, and $\bx \in X^n$, \begin{equation} \label{e:N}
    (\bN\bx)_{\le i-1} := \sum_{j=1}^{i-1} N_{ij}\bx_j,
\end{equation} and \begin{equation} \label{e:F_i}
    \begin{aligned}
         \big(\bP \bB \bR \bx\big)_{\le \min\{i-1,p\}} 
         :=\sum_{j=1}^{\min\{i-1,p\}}P_{ij}B_j\left(\sum_{t=1}^{j-1} R_{jt}\bx_t\right). 
    \end{aligned}
\end{equation} 
 
\begin{remark} \label{r:explicit}

For ease of presentation, in this work we analyse the cocoercive case. Similar results can be obtained when cocoercivity is replaced with Lipschitz continuity (see \cite{dao2025general}).
    
\end{remark}





\begin{lemma}[{\cite[Lemma 3.6]{dao2025general}}] \label{l:con-ave} 

Let $A_1, \dots, A_n$ be  maximally monotone operators on $X$, and $B_1, \dots, B_{p}$ be  $\beta$-cocoercive operators on $X$. Suppose Assumption~\ref{a:stand}\ref{a:stand_kerM},~\ref{a:stand_P},~\ref{a:stand_R} and~\ref{a:stand_neg_semidef} hold. Then, for all $\theta, \gamma \in \R_{++}$, $T_{\theta,\gamma}$ is conically $\eta(\theta,\gamma)$-averaged with $ \eta(\theta,\gamma) = \frac{2\theta}{2 - \gamma\mu}$, when $0 < \gamma < \frac{2}{\mu}$ and $\mu = \beta\|(P^* -R)(M^*)^\dagger\|^2$. The convention for this result is that $\frac{1}{\mu} = +\infty$ if $\mu = 0$.
\end{lemma}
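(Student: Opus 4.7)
The plan is to verify the defining inequality of conical $\eta$-averagedness for $T_{\theta,\gamma}$, namely
\[\|T_{\theta,\gamma}\bz - T_{\theta,\gamma}\bz'\|^2 + \tfrac{1-\eta(\theta,\gamma)}{\eta(\theta,\gamma)}\|(\Id-T_{\theta,\gamma})\bz - (\Id-T_{\theta,\gamma})\bz'\|^2 \le \|\bz-\bz'\|^2\]
for all $\bz, \bz' \in X^m$. Set $\bx := \bx^\gamma(\bz)$, $\bx' := \bx^\gamma(\bz')$, $\be := \bx - \bx'$, and $\overline{\bz} := \bz - \bz'$. Since $T_{\theta,\gamma}\bz - T_{\theta,\gamma}\bz' = \overline{\bz} - \theta\bM^*\be$, expanding both squared norms and cancelling $\|\overline{\bz}\|^2$ reduces the target to
\[\tfrac{\theta}{\eta(\theta,\gamma)}\|\bM^*\be\|^2 \le 2\langle \bM\overline{\bz},\be\rangle. \qquad (\ast)\]

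Next, I would extract information from the resolvent construction~\eqref{e:FB-resolvent}. Each component satisfies $\tfrac{d_i}{\gamma}(u_i - \bx_i) \in A_i\bx_i$, where $u_i$ denotes the input to the $i$-th resolvent. Applying monotonicity of each $A_i$ to the pair $(\bx_i,\bx_i')$, summing over $i$, and repackaging using Kronecker products yields
\[\langle \bM\overline{\bz},\be\rangle + \langle \bN\be,\be\rangle - \gamma\langle \bP(\bB\bR\bx - \bB\bR\bx'),\be\rangle \ge \langle \bD\be,\be\rangle.\]

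After that, I would control the forward term using cocoercivity together with the identities $P^*\bone = \bone$ and $R\bone = \bone$ from Assumption~\ref{a:stand}\ref{a:stand_P}--\ref{a:stand_R}. These identities allow me to split $\langle \bP(\bB\bR\bx - \bB\bR\bx'),\be\rangle$ into the cocoercive inner products $\sum_j \langle B_j(\bR\bx)_j - B_j(\bR\bx')_j, (\bR\bx)_j - (\bR\bx')_j\rangle$, bounded below by $\beta^{-1}\sum_j\|B_j(\bR\bx)_j - B_j(\bR\bx')_j\|^2$, plus a cross-term involving $(P^* - R)\be$. Exploiting $\ker(\bM^*) = \R\bone$ from Assumption~\ref{a:stand}\ref{a:stand_kerM} to substitute $\be$ (up to the kernel direction $\bone$, which is annihilated by the relevant row sums of $P^* - R$) by $(\bM^*)^\dagger\bM^*\be$, and applying Young's inequality against the cocoercive term, the cross-term is bounded by $\tfrac{\gamma\mu}{2}\|\bM^*\be\|^2$ with $\mu = \beta\|(P^* - R)(M^*)^\dagger\|^2$.

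Finally, substituting these bounds and invoking Assumption~\ref{a:stand}\ref{a:stand_neg_semidef} in the form $\langle(2\bD - \bN - \bN^* - \bM\bM^*)\be,\be\rangle \ge 0$ gives
\[2\langle \bM\overline{\bz},\be\rangle \ge (1 - \gamma\mu)\|\bM^*\be\|^2,\]
so $(\ast)$ holds precisely when $\tfrac{\theta}{\eta(\theta,\gamma)} = \tfrac{2-\gamma\mu}{2}$, i.e.\ $\eta(\theta,\gamma) = \tfrac{2\theta}{2-\gamma\mu}$, under the constraint $\gamma < 2/\mu$. The main obstacle I anticipate is the cocoercive bookkeeping in the third step: correctly expanding $\bP(\bB\bR\bx - \bB\bR\bx')$ so that every cocoercivity scalar product is used in full and the residual is cleanly attributable to $(P^* - R)$, and then legitimising the pseudoinverse substitution via $\ker\bM^* = \R\bone$. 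This decomposition is exactly what forces the specific value of $\mu$; any coarser handling of the cross-term would inflate the constant and shrink the admissible range for $\gamma$.
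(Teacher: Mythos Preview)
The paper does not give its own proof of this lemma; it is quoted verbatim from \cite[Lemma~3.6]{dao2025general} and used as a black box. So there is nothing in the present paper to compare against directly.

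That said, your outline is the standard route to such a result and is essentially correct: reduce conical averagedness to the scalar inequality $(\ast)$, extract the monotonicity inequality from the resolvent inclusions, split the forward term via $\bP^*\be = \bR\be + (\bP^*-\bR)\be$ so that cocoercivity absorbs one piece and the pseudoinverse identity $(\bP^*-\bR)\be = (\bP^*-\bR)(\bM^*)^\dagger\bM^*\be$ (valid because $(P^*-R)\bone = 0$ and $\ker\bM^* = \R\bone$) controls the other, then close with Assumption~\ref{a:stand}\ref{a:stand_neg_semidef}. One arithmetic slip: your displayed bound $2\langle\bM\overline{\bz},\be\rangle \ge (1-\gamma\mu)\|\bM^*\be\|^2$ is off by a factor; carrying the Young inequality through with the coefficient needed to cancel the $\tfrac{2\gamma}{\beta}\|\Delta\bB\|^2$ term gives $2\langle\bM\overline{\bz},\be\rangle \ge \bigl(1-\tfrac{\gamma\mu}{2}\bigr)\|\bM^*\be\|^2 = \tfrac{2-\gamma\mu}{2}\|\bM^*\be\|^2$, which is exactly what your stated value of $\eta(\theta,\gamma)$ requires. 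Also note that when you write $\langle\bN\be,\be\rangle$ after summing the monotonicity inequalities, what actually appears is $\sum_i\sum_{j<i}N_{ij}\langle\be_j,\be_i\rangle$; this equals $\tfrac{1}{2}\langle(\bN+\bN^*)\be,\be\rangle$ only under the strict lower-triangularity of $N$ (Assumption~\ref{a:stand}\ref{a:stand_triangular}), so that hypothesis is implicitly in play even though the lemma as stated lists only \ref{a:stand_kerM}--\ref{a:stand_neg_semidef}.
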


In \cite{atenas2025relocated}, the variable stepsize variant of a family of resolvent splitting methods is constructed as  relocated fixed-point iterations. The first step to build a similar construction for distributed forward-backward methods is to define an appropriate fixed-point relocator. We define the \Qname{} of the iteration operator in \eqref{e:FB-operator} based on the following mapping. Given $\gamma \in \R_{++}$, define  $\be^\gamma: X^m  \to X^n$ by: for all $\bz \in X^m$, \begin{equation} \label{eq:def-e}
      \begin{aligned}
          \be^\gamma(\bz) &:= P_{\range(\bM)}\big(d_i \bx^\gamma_i(\bz) - (\bN \bx^\gamma(\bz))_{\leq i-1}\big)_{i=1}^n 
      \end{aligned}
 \end{equation} where $\bN \bx^\gamma(\bz)$ is defined as in \eqref{e:N}, and from Assumption~\ref{a:stand}\ref{a:stand_kerM}, \begin{equation} \label{e:range-M}
     \range(\bM) = \left\{ \by \in X^n: \sum_{i=1}^n \by_i = 0\right\}.
 \end{equation} 
 The next result presents fundamental properties satisfied by the operator $\be^\gamma$ in \eqref{eq:def-e} over the set of fixed-points of the iteration operator in \eqref{e:FB-operator} that will yield the satisfaction of Definition~\ref{d:FPR}\ref{a:trans_bijection}-\ref{a:trans_semigroup} for the fixed-point relocator for \eqref{e:FB-operator}.

\begin{proposition} \label{l:known-e}

Let $A_1, \dots, A_n$ be 
set-valued operators on $X$, and $B_1, \dots, B_{p}$ be 
single-valued operators on $X$. For $\theta ,\gamma \in \R_{++}$, consider the operator $T_{\theta,\gamma}$ defined in \eqref{e:FB-operator} and $\bx^\gamma = (\bx^\gamma_1, \dots, \bx^\gamma_n)$ defined in \eqref{e:FB-resolvent}. Then, the following hold. 

\begin{enumerate} 

    \item\label{l:known-e-ii} If Assumption~\ref{a:stand}\ref{a:stand_kerM} holds, then for all $\bz \in \Fix T_{\theta,\gamma}$, there exists $\overline{x} \in X$ such that for all $i = 1, \dots, n$, $\bx_i^\gamma(\bz) = \overline{x}$.

      \item \label{l:known-e-iii} If Assumption~\ref{a:stand}\ref{a:stand_kerM},~\ref{a:stand_triangular} and~\ref{a:stand_N} hold, then for all $\bz \in \Fix T_{\theta,\gamma}$, and $\Bar{x}$ from \ref{l:known-e-ii}, $$\be^\gamma(\bz)  = \left(d_1 \overline{x} , \dots, \left(d_i - \sum_{j=1}^{i-1} N_{ij}\right) \overline{x} , \dots ,\left(d_n - \sum_{j=1}^{n-1}N_{nj}\right) \overline{x}\right).$$

       \item \label{l:known-e-0}   If Assumption~\ref{a:stand}\ref{a:stand_kerM},~\ref{a:stand_P},~\ref{a:stand_R}, and~\ref{a:stand_N} hold, then: \begin{equation*}
           \Fix T_{\theta,\gamma} \neq \emptyset \text{~if and only if~} \zer\left(\sum_{i=1}^n A_i + \sum_{i=1}^p B_i\right) \neq \emptyset.
       \end{equation*}

   \end{enumerate}
    
\end{proposition}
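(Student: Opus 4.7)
My plan is to handle the three parts in order.

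For \ref{l:known-e-ii}, combining $\bz\in\Fix T_{\theta,\gamma}$ with $\theta>0$ in the definition \eqref{e:FB-operator} gives $\bM^*\bx^\gamma(\bz)=0$. Since $\bM=M\otimes\Id$, we have $\ker(\bM^*)=\ker(M^*)\otimes X$, which by Assumption~\ref{a:stand}\ref{a:stand_kerM} equals $\R\bone\otimes X=\{(\overline{x},\dots,\overline{x}):\overline{x}\in X\}$, giving the claim. For \ref{l:known-e-iii}, substituting $\bx_i^\gamma(\bz)=\overline{x}$ into the argument of the projection in \eqref{eq:def-e} gives, using the lower-triangular form of $N$ (Assumption~\ref{a:stand}\ref{a:stand_triangular}), the $n$-tuple whose $i$-th coordinate is $\big(d_i-\sum_{j=1}^{i-1}N_{ij}\big)\overline{x}$. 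Summing over $i$ yields $\big(\sum_i d_i-\sum_{i,j}N_{ij}\big)\overline{x}=0$ by Assumption~\ref{a:stand}\ref{a:stand_N}, so this tuple already belongs to $\range(\bM)=\{\by\in X^n:\sum_i\by_i=0\}$ by \eqref{e:range-M}, the projection acts as the identity, and the claimed formula follows.

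For \ref{l:known-e-0}, I prove both implications through the intermediate fact that $\bx^\gamma(\bz)$ must be constant. In $(\Rightarrow)$, \ref{l:known-e-ii} supplies $\overline{x}$ with $\bx^\gamma(\bz)=(\overline{x},\dots,\overline{x})$. Clearing each resolvent in \eqref{e:FB-resolvent} produces some $a_i\in A_i\overline{x}$ satisfying
\[
d_i\overline{x}+\gamma a_i-(\bN\bx^\gamma(\bz))_{\leq i-1}+\gamma\,(\bP\bB\bR\bx^\gamma(\bz))_{\leq\min\{i-1,p\}}=(\bM\bz)_i.
\]
Summing over $i=1,\dots,n$ collapses the left-hand side: the $\sum_i d_i-\sum_{i,j}N_{ij}$ pair vanishes by Assumption~\ref{a:stand}\ref{a:stand_N}; constancy of $\bx^\gamma$ together with $R\bone=\bone$ (Assumption~\ref{a:stand}\ref{a:stand_R}) reduces every argument of $B_j$ to $\overline{x}$; $P^*\bone=\bone$ (Assumption~\ref{a:stand}\ref{a:stand_P}) combined with $P$ being lower triangular with zero diagonal yields $\sum_{i>j}P_{ij}=1$, collapsing the $\bP\bB\bR$ double sum to $\sum_{j=1}^p B_j\overline{x}$; and the right-hand side sum vanishes because $\bone^*M=0$ by Assumption~\ref{a:stand}\ref{a:stand_kerM}. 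What remains is $\sum_{i=1}^n a_i+\sum_{j=1}^p B_j\overline{x}=0$, hence $\overline{x}\in\zer\big(\sum_i A_i+\sum_j B_j\big)$.

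For $(\Leftarrow)$, I invert the construction. Starting from such an $\overline{x}$, pick $a_i\in A_i\overline{x}$ with $\sum_i a_i+\sum_{j=1}^p B_j\overline{x}=0$ and define $\bv\in X^n$ by
\[
\bv_i:=d_i\overline{x}+\gamma a_i-\sum_{j<i}N_{ij}\overline{x}+\gamma\sum_{j=1}^{\min\{i-1,p\}}P_{ij}B_j\overline{x}.
\]
Rerunning the same sum-collapse in reverse shows $\sum_i\bv_i=0$, hence $\bv\in\range(\bM)$ and one can pick $\bz\in X^m$ with $\bM\bz=\bv$. A short induction on $i=1,\dots,n$ using the resolvent identity $u=J_{\tau A}(w)\Leftrightarrow w-u\in\tau Au$ then confirms $\bx_i^\gamma(\bz)=\overline{x}$, after which $\bM^*\bx^\gamma(\bz)=0$ places $\bz$ in $\Fix T_{\theta,\gamma}$. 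The main obstacle I foresee is the sum-collapse in $(\Rightarrow)$: one has to coordinate $R\bone=\bone$, $P^*\bone=\bone$, the zero-diagonal/lower-triangular structures and the truncations $\leq i-1$, $\leq\min\{i-1,p\}$ so that every $B_j$ with $1\leq j\leq p$ resurfaces with coefficient exactly $1$; once that identity is established, the $(\Leftarrow)$ direction is its mechanical inversion.
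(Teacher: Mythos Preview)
Your arguments for \ref{l:known-e-ii} and \ref{l:known-e-iii} are essentially the paper's own: the paper also reads off $\bx^\gamma(\bz)\in\ker(\bM^*)$ from \eqref{e:FB-operator} and invokes Assumption~\ref{a:stand}\ref{a:stand_kerM}, and then computes the projection in \eqref{eq:def-e} explicitly by subtracting the component average and checking that this average vanishes via Assumption~\ref{a:stand}\ref{a:stand_N} and~\ref{a:stand_triangular}.

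For \ref{l:known-e-0} the paper does not give a proof at all; it simply cites \cite[Lemma~3.5]{dao2025general}. Your self-contained argument is correct and is, in spirit, exactly the manipulation the paper performs later (in the proof of Proposition~\ref{p:FPR-Q} and in Appendix~\ref{a:convergence}) when it unwinds the resolvent identities and collapses the $N$- and $PBR$-sums. One small point worth flagging: both the $N$-collapse (passing from $\sum_i\sum_{j<i}N_{ij}$ to $\sum_{i,j}N_{ij}$) and the $P$-collapse (passing from $\sum_{i>j}P_{ij}$ to $\sum_i P_{ij}$) require the lower-triangular, zero-diagonal structure of Assumption~\ref{a:stand}\ref{a:stand_triangular}, which the statement of \ref{l:known-e-0} does not list among its hypotheses. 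This looks like a minor omission in the paper's statement rather than a gap in your reasoning, since the paper itself invokes \ref{a:stand_triangular} freely for the same purpose elsewhere.
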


\begin{proof}

Let $\theta,\gamma \in \R_{++}$ and $\bz \in \Fix T_{\theta,\gamma}$. From \eqref{e:FB-operator}, it follows that $\bx^\gamma(\bz) \in \ker(\bM^*)$, and thus Assumption~\ref{a:stand}\ref{a:stand_kerM} implies that there exists $\Bar{x} \in X$ such that for all $i = 1, \dots, n$, $\bx^\gamma_i(\bz) = \Bar{x}$, proving \ref{l:known-e-ii}. Furthermore, from \eqref{eq:def-e} and \eqref{e:range-M}, \begin{equation*}
        \begin{aligned}
            \be^\gamma_i(\bz) &= d_i \bx^\gamma_i(\bz) - (\bN \bx^\gamma(\bz))_{\le i-1} - \frac{1}{n}\left(\sum_{j=1}^n d_j \bx^\gamma_j(\bz) - (\bN \bx^\gamma(\bz))_{\le j-1}\right)\\
            & = d_i \Bar{x}- (\bN \bx^\gamma(\bz))_{\le i-1} - \frac{1}{n}\left(\sum_{j=1}^n d_j \Bar{x} - (\bN \bx^\gamma(\bz))_{\le j-1}\right).
        \end{aligned}
    \end{equation*} For $j=1$, $(\bN \bx^\gamma(\bz))_{\le 0} = 0$. For $j=2,\dots, n$,   \begin{equation} \label{e:Nx-at-fix}
        (\bN \bx^\gamma(\bz))_{\le j- 1} = \sum_{t=1}^{j-1} N_{jt}\bx^\gamma_t(\bz) = \sum_{t=1}^{j-1} N_{jt}\Bar{x},
    \end{equation} and thus in view of Assumption~\ref{a:stand}\ref{a:stand_triangular} and~\ref{a:stand_N}, \begin{equation*}
        \sum_{j=1}^n (\bN \bx^\gamma(\bz))_{\le j- 1} = \sum_{j=1}^n\sum_{t=1}^{j-1} N_{jt}\Bar{x} =  \sum_{j=1}^n\sum_{t=1}^{n} N_{jt}\Bar{x} = \sum_{j=1}^n d_j.
    \end{equation*} Altogether, we get \begin{equation*}
        \be^\gamma_i(\bz) = d_i \Bar{x}- \sum_{j=1}^{i-1} N_{ij}\Bar{x},
    \end{equation*} from where we conclude \ref{l:known-e-iii}. Finally, item \ref{l:known-e-0} is \cite[Lemma 3.5]{dao2025general}.

\end{proof}

The definition of a \Qname{} also requires the Lipschitz continuity condition in Definition~\ref{d:FPR}\ref{a:trans_nonex-type}. In the following result, we show Lipschitz continuity of the mapping $\be^{\gamma}$ in \eqref{eq:def-e}, which will imply Definition~\ref{d:FPR}\ref{a:trans_nonex-type} for the fixed-point relocator for \eqref{e:FB-operator}.

\begin{proposition} \label{l:known-e-i} 
Let $A_1, \dots, A_n$ be  maximally monotone operators on $X$, and $B_1, \dots, B_{p}$ be  $\beta$-cocoercive 
operators on $X$. Suppose Assumption~\ref{a:stand}\ref{a:stand_triangular} holds. Then, the operator $\be^\gamma$ defined in \eqref{eq:def-e} is Lipschitz continuous with constant \begin{equation*} 
   \sqrt{C_1^2 + \sum_{i=2}^n \left(C_i(\gamma)  + \|\bN\| \sum_{j=1}^{i-1} \frac{C_j(\gamma)}{d_j}  \right)^2},
\end{equation*}where \begin{equation} \label{e:C-lip}
      \left\{\begin{aligned}
          C_1  & := \sqrt{m}\|\bM\| \\
          C_i(\gamma)  & := \sqrt{m}\|\bM \| + \|\bN\| \sum_{j=1}^{i-1} \frac{C_j(\gamma)}{d_j} + \gamma\beta\| \bP\| \| \bR\| \sum_{j=1}^{\min\{i-1,p\}}\sum_{t=1}^{j-1} \frac{C_t(\gamma)}{d_t}
          \text{~for~} i = 2, \dots, n,
      \end{aligned}\right.
    \end{equation} using $C_1(\gamma) := C_1$ for ease of notation.
\end{proposition}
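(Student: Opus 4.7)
The plan is to decompose the operator as $\be^\gamma = P_{\range(\bM)} \circ \bw^\gamma$, where $\bw^\gamma:X^m\to X^n$ has $i$-th coordinate $\bw_i^\gamma(\bz) := d_i \bx_i^\gamma(\bz) - (\bN \bx^\gamma(\bz))_{\le i-1}$. Since projection onto the closed linear subspace $\range(\bM)$ is nonexpansive, it suffices to control the Lipschitz constant of $\bw^\gamma$. The heart of the argument is an induction on $i$ establishing the intermediate Lipschitz bound
$$d_i \|\bx_i^\gamma(\bz) - \bx_i^\gamma(\bz')\| \leq C_i(\gamma)\|\bz-\bz'\| \qquad \text{for all } i=1,\dots,n,$$
with $C_i(\gamma)$ as in \eqref{e:C-lip}.

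For the base case $i=1$, nonexpansiveness of $J_{(\gamma/d_1)A_1}$ reduces matters to bounding $\|(\bM(\bz-\bz'))_1\| = \|\sum_{j=1}^m M_{1j}(\bz_j - \bz_j')\|$; applying the triangle inequality together with $|M_{1j}|\leq \|\bM\|$ and then Cauchy--Schwarz ($\sum_j \|\bz_j-\bz_j'\| \leq \sqrt{m}\|\bz-\bz'\|$) yields $C_1 = \sqrt{m}\|\bM\|$. For the inductive step $i\ge 2$, nonexpansiveness of $J_{(\gamma/d_i)A_i}$ and the triangle inequality split the bound into three contributions coming from the three terms in the argument of the resolvent in \eqref{e:FB-resolvent}: the $\bM$-term is handled as in the base case; the $\bN$-term is bounded via $|N_{ij}|\leq \|\bN\|$ and the induction hypothesis applied to each $\bx_j^\gamma$ with $j<i$; the $\bP\bB\bR$-term uses $\beta$-Lipschitz continuity of each $B_j$ (inherited from cocoercivity), combined with $|P_{ij}|\leq \|\bP\|$, $|R_{jt}|\leq \|\bR\|$, and again the induction on $\bx_t^\gamma$ for $t<j$. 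Collecting these contributions gives exactly the recursion defining $C_i(\gamma)$ in \eqref{e:C-lip}.

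For the final assembly, nonexpansiveness of $P_{\range(\bM)}$ yields
$$\|\be^\gamma(\bz) - \be^\gamma(\bz')\|^2 \leq \sum_{i=1}^n \|\bw_i^\gamma(\bz) - \bw_i^\gamma(\bz')\|^2.$$
The $i=1$ coordinate collapses to $d_1\bx_1^\gamma$ (since $(\bN \bx)_{\le 0} = 0$) and contributes $C_1^2$ to the squared bound. For $i\ge 2$, the triangle inequality together with $|N_{ij}|\leq \|\bN\|$ and the intermediate bound gives $\|\bw_i^\gamma(\bz) - \bw_i^\gamma(\bz')\| \leq \bigl(C_i(\gamma) + \|\bN\|\sum_{j=1}^{i-1} C_j(\gamma)/d_j\bigr) \|\bz-\bz'\|$. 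Taking square roots yields the announced Lipschitz constant.

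The hard part will be the careful bookkeeping of the recursive triangular dependence: because each $\bx_i^\gamma$ is defined in terms of all the earlier $\bx_j^\gamma$, the induction has to be propagated through both the $\bN$-term and the doubly-nested $\bP\bB\bR$-term, and one must be consistent in replacing matrix-vector norms by the crude entrywise bounds $|N_{ij}|,|P_{ij}|,|R_{jt}|\leq\|\bN\|,\|\bP\|,\|\bR\|$ that keep the recursion closed under the stated form of $C_i(\gamma)$. All other ingredients — nonexpansiveness of resolvents and of the subspace projection, $\beta$-Lipschitz continuity of the $B_j$'s, and the triangle inequality — are routine.
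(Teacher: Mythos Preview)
Your proposal is correct and follows essentially the same approach as the paper: reduce to the pre-projection map via nonexpansiveness of $P_{\range(\bM)}$, prove by induction that $d_i\bx_i^\gamma$ is $C_i(\gamma)$-Lipschitz using the entrywise bounds $|A_{ij}|\le\|\bA\|$ for $A\in\{M,N,P,R\}$ together with nonexpansiveness of resolvents and $\beta$-Lipschitz continuity of the $B_j$, and then assemble the coordinatewise bounds for $\bw_i^\gamma$ via the triangle inequality. The paper's proof is organized in exactly this way.
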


\begin{proof}
    In view of nonexpansiveness of $P_{\range(\bM)}$,  we only need to bound, for all $\bz,\bw \in X^m$,  \begin{align*}
        &\|\bx^\gamma_1(\bz) - \bx^\gamma_1(\bw)\|\text{~and~} \\ &\| d_i(\bx^\gamma_i(\bz)-\bx^\gamma_i(\bw))- ( \bN [\bx^\gamma(\bz)-\bx^\gamma(\bw)])_{\le i-1}\| \text{~for~} i = 2, \dots, n.
    \end{align*}   
 First, observe that for all $i = 1,\dots,n $, $j = 1,\dots, m$, \begin{equation} \label{e:norm-ineq} \text{~for~} A \in \{M,N, P, P^*, R\}, \;  |A_{ij}| \le \|\bA\|.   \end{equation} We show that for every $i=1,\dots, n$ and $\gamma \in \R_{++}$, $\bx^\gamma_i$ is Lipschitz continuous with constant $C_i(\gamma)/d_i$. For $i=1$, \eqref{e:FB-resolvent},  nonexpansiveness of the resolvent, and \eqref{e:norm-ineq} yield \begin{equation} \label{e:x_1-Lip}\|\bx^\gamma_1(\bz) - \bx^\gamma_1(\bw)\| \leq \frac{1}{d_1}\sum_{j=1}^{m}|M_{1j}|\|\bz_j - \bw_j\|\leq \frac{\sqrt{m}\|\bM\|}{d_1}\|\bz-\bw\| = \dfrac{C_1}{d_1}\|\bz-\bw\|. \end{equation}  Next, given $i = 2, \dots, n$, suppose for all $j = 1, \dots, i-1$, $\|\bx^\gamma_j(\bz) - \bx^\gamma_j(\bw)\| \le \frac{C_j(\gamma)}{d_j} \|\bz-\bw\| $. Then, from \eqref{e:FB-resolvent}, and the fact that $B_j$ is Lipschitz continuous with constant $\beta$, \begin{align*}
     \|\bx^\gamma_i(\bz) - \bx^\gamma_i(\bw)\| & \leq \frac{1}{d_i}\left(\sum_{j=1}^m|M_{ij}|\|\bz_j - \bw_j\| + \sum_{j=1}^{i-1} |N_{ij}| \| \bx^\gamma_j(\bz) - \bx^\gamma_j(\bw)\| \right. \\ &\left. \quad\quad\quad\quad + \gamma \sum_{j=1}^{\min\{i-1,p\}} |P_{ij}| \beta \sum_{t=1}^{j-1}|R_{jt}| \| \bx^\gamma_t(\bz) - \bx^\gamma_t(\bw) \| \right) \\ 
     & \leq \frac{1}{d_i} \left( \sqrt{m}\|\bM \| + \|\bN\| \sum_{j=1}^{i-1} \frac{C_j(\gamma)}{d_j} + \gamma\beta\| \bP\| \| \bR\| \sum_{j=1}^{\min\{i-1,p\}}\sum_{t=1}^{j-1} \frac{C_t(\gamma)}{d_t}  \right) \|\bz - \bw\|.
 \end{align*}  Hence, the claim for $\bx_i^\gamma$ follows. In this manner, for $i=2,\dots, n$, 
\begin{align*} & \| d_i(\bx^\gamma_i(\bz)-\bx^\gamma_i(\bw))- ( \bN [\bx^\gamma(\bz)-\bx^\gamma(\bw)])_{\le i-1}\| \\%
 \leq & \;  d_i\|\bx^\gamma_i(\bz)-\bx^\gamma_i(\bw) \| + \sum_{j=1}^{i-1} |N_{ij}|\|\bx^\gamma_j(\bz)-\bx^\gamma_j(\bw)\|  \\%
\leq &\; \left(C_i(\gamma)  + \|\bN\| \sum_{j=1}^{i-1} \frac{C_j(\gamma)}{d_j}  \right) \|\bz - \bw\|. \end{align*}
Finally,
\begin{equation*}
    \begin{aligned}
        & \|\be^\gamma(\bz) - \be^\gamma(\bw)\| \\ 
        \le & \; \sqrt{d_1^2\|\bx^\gamma_1(\bz)-\bx^\gamma_1(\bw)\|^2 + \sum_{i=2}^n \| d_i(\bx^\gamma_i(\bz)-\bx^\gamma_i(\bw))- ( \bN [\bx^\gamma(\bz)-\bx^\gamma(\bw)])_{\le i-1}\|^2}\\
        \le & \;\sqrt{C_1^2 + \sum_{i=2}^n \left(C_i(\gamma)  + \|\bN\| \sum_{j=1}^{i-1} \frac{C_j(\gamma)}{d_j}  \right)^2} \|\bz-\bw\|,
    \end{aligned}
\end{equation*} which concludes the proof.
\end{proof}

In the next result, building on the properties of $\be^\gamma$, we propose a \Qname{} for the distributed forward-backward operator (cf. \eqref{e:FPR-DR}).

\begin{proposition}[Fixed-point relocator for distributed forward-backward methods]\label{p:FPR-Q}

Let $A_1, \dots, A_n$ be  maximally monotone operators on $X$, and $B_1, \dots, B_{p}$ be  $\beta$-cocoercive operators on $X$. For $\theta ,\gamma \in \R_{++}$, consider the operator $T_{\theta,\gamma}$ defined in \eqref{e:FB-operator} and $\bx^\gamma = (\bx^\gamma_1, \dots, \bx^\gamma_n)$ defined in \eqref{e:FB-resolvent}. Suppose Assumption~\ref{a:stand}\ref{a:stand_kerM},~\ref{a:stand_triangular} and~\ref{a:stand_N} hold true. Let $\gamma, \delta \in \R_{++}$, and define the operator $Q_{\delta\gets \gamma}: X^{m} \to X^{m}$ for all $\bz \in X^{m}$ by \begin{equation} \label{e:FPR-Q}
        Q_{\delta\gets \gamma}\bz := \tfrac{\delta}{\gamma}\bz +  \left(1 - \tfrac{\delta}{\gamma} \right)\bM^{\dagger}\be^\gamma(\bz),
    \end{equation} where  $\be^\gamma$ is given in \eqref{eq:def-e}, and $\bM^{\dagger}$ denotes the pseudo-inverse of $\bM = M \otimes I$.   
    Then 
    $(Q_{\delta\gets \gamma})_{\gamma, \delta \in \R_{++}}$ 
    define  \Qname{s} for the distributed forward-backward operators $(T_{\theta,\gamma})_{\gamma\in\R_{++}}$ given by \eqref{e:FB-operator}-\eqref{e:FB-resolvent}, with  Lipschitz constants\begin{equation} \label{e:Lip-const}
        \Lip_{\delta\gets\gamma} = \max\left\{1,\frac{\delta}{\gamma} +  \left|1 - \frac{\delta}{\gamma} \right|\|\bM^{\dagger}\|\sqrt{C_1^2 + \sum_{i=2}^n \left(C_i(\gamma)  + \|\bN\| \sum_{j=1}^{i-1} \frac{C_j(\gamma)}{d_j}  \right)^2}\right\},
    \end{equation} and $C_1$ and $C_2(\gamma), \dots, C_n(\gamma)$ given in  \eqref{e:C-lip}. 
\end{proposition}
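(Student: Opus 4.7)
The plan is to verify the four conditions in Definition~\ref{d:FPR} in turn, reducing each to the structural results already established for $\bx^\gamma$ and $\be^\gamma$. Condition (iv) is the easiest: by the triangle inequality,
$\|Q_{\delta\gets\gamma}\bz - Q_{\delta\gets\gamma}\bw\| \le \tfrac{\delta}{\gamma}\|\bz-\bw\| + \bigl|1-\tfrac{\delta}{\gamma}\bigr|\,\|\bM^{\dagger}\|\,\|\be^\gamma(\bz)-\be^\gamma(\bw)\|$,
and I would invoke Proposition~\ref{l:known-e-i} to bound the last norm by the square-root factor appearing inside \eqref{e:Lip-const}; the outer maximum with $1$ is taken solely to ensure $\Lip_{\delta\gets\gamma}\in[1,+\infty)$ as required by Definition~\ref{d:FPR}. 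Condition (ii) is immediate, since for fixed $\bz$ the map $\delta\mapsto Q_{\delta\gets\gamma}\bz$ is affine in $\delta$.

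The substantive step is to show that $Q_{\delta\gets\gamma}$ maps $\Fix T_{\theta,\gamma}$ into $\Fix T_{\theta,\delta}$. Fix $\bz\in\Fix T_{\theta,\gamma}$. By Proposition~\ref{l:known-e}\ref{l:known-e-ii} there exists $\overline{x}\in X$ with $\bx^\gamma_i(\bz)=\overline{x}$ for every $i$, and by \ref{l:known-e-iii} one has $\be^\gamma_i(\bz)=\bigl(d_i-\sum_{j=1}^{i-1}N_{ij}\bigr)\overline{x}$. Since $\be^\gamma(\bz)\in\range(\bM)$ by construction, the identity $\bM\bM^\dagger = P_{\range(\bM)}$ yields $\bM\,Q_{\delta\gets\gamma}\bz = \tfrac{\delta}{\gamma}\bM\bz + (1-\tfrac{\delta}{\gamma})\be^\gamma(\bz)$. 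Setting $\bz':=Q_{\delta\gets\gamma}\bz$, I would now show by induction on $i=1,\dots,n$ that $\bx^\delta_i(\bz')=\overline{x}$. Let $Y_i^\gamma$ denote the argument of the $i$-th resolvent in \eqref{e:FB-resolvent} at parameter $\gamma$, so that $\overline{x}=J_{\frac{\gamma}{d_i}A_i}(Y_i^\gamma)$. Under the inductive hypothesis $\bx^\delta_j(\bz')=\overline{x}$ for $j<i$, the explicit form of $\be^\gamma_i(\bz)$ and Assumption~\ref{a:stand}\ref{a:stand_triangular} would make the argument of the $i$-th resolvent at parameter $\delta$ collapse, after elementary algebraic simplification, to $\tfrac{\delta}{\gamma}Y_i^\gamma+(1-\tfrac{\delta}{\gamma})\overline{x}$. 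Applying Lemma~\ref{l:known-J}\ref{l:known-J-1} to the operator $A_i/d_i$ with the parameter pair $(\gamma,\delta)$ then gives $\bx^\delta_i(\bz')=\overline{x}$, completing the induction and proving $\bz'\in\Fix T_{\theta,\delta}$.

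A crucial observation is that the formula of Proposition~\ref{l:known-e}\ref{l:known-e-iii} depends only on $\overline{x}$ and the structural matrices, not on the stepsize; hence $\be^\delta(\bz')=\be^\gamma(\bz)$. Substituting this equality into $Q_{\varepsilon\gets\delta}(\bz')$ and grouping the terms in $\bz$ and $\bM^\dagger\be^\gamma(\bz)$ would yield $Q_{\varepsilon\gets\delta}Q_{\delta\gets\gamma}\bz = \tfrac{\varepsilon}{\gamma}\bz+(1-\tfrac{\varepsilon}{\gamma})\bM^\dagger\be^\gamma(\bz) = Q_{\varepsilon\gets\gamma}\bz$, which is (iii). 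Specialising to $\varepsilon=\gamma$ gives $Q_{\gamma\gets\delta}Q_{\delta\gets\gamma}=\Id$ on $\Fix T_{\theta,\gamma}$, and by symmetry the opposite composition is the identity on $\Fix T_{\theta,\delta}$, establishing the bijectivity in (i). The main technical obstacle is the inductive reduction in the preceding paragraph: one has to track the lower-triangular recursion defining $\bx^\delta$ carefully and verify that the explicit form of $\be^\gamma(\bz)$ precisely cancels the off-diagonal $\bN$- and $\bP\bB\bR$-contributions so that only the combination $\tfrac{\delta}{\gamma}Y_i^\gamma+(1-\tfrac{\delta}{\gamma})\overline{x}$ survives, enabling the invocation of the resolvent identity.
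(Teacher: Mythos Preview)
Your proposal is correct and follows essentially the same route as the paper's proof: the paper also first establishes $Q_{\delta\gets\gamma}\Fix T_{\theta,\gamma}\subseteq\Fix T_{\theta,\delta}$ via the same induction on $i$ using Lemma~\ref{l:known-J}\ref{l:known-J-1}, then uses the invariance $\be^\delta(\by)=\be^\gamma(\bz)$ to obtain the semigroup property~(iii), specialises to $\varepsilon=\gamma$ for the bijectivity in~(i), and handles~(ii) and~(iv) exactly as you describe. The only cosmetic difference is that the paper first phrases $Q_{\delta\gets\gamma}\bz$ as the unique solution of $\bM\by=\tfrac{\delta}{\gamma}\bM\bz+(1-\tfrac{\delta}{\gamma})\be^\gamma(\bz)$ (by injectivity of $\bM$) rather than invoking $\bM\bM^\dagger=P_{\range(\bM)}$ directly, but this yields the same identity you use.
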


\begin{proof}
    Since $\range(\bM)$ is closed and $\be^\gamma(\bz) \in \range(\bM)$, in view of \cite[Proposition 3.27]{BC2017},  for all $\delta, \gamma \in \R_{++}$ and $\bz \in X^{m}$, the system \begin{equation}\label{e:unique-sol}
    \bM\by = \tfrac{\delta}{\gamma}\bM\bz +  \left(1 - \tfrac{\delta}{\gamma} \right)\be^\gamma(\bz)
\end{equation} has at least one solution. Uniqueness of the solution follows from injectivity of $\bM$. Hence, the mapping of unique solutions to this system is exactly $Q_{\delta\gets\gamma}$. 

In order to show that $Q_{\delta\gets\gamma}$ is 
a \Qname{}, we first prove that \begin{equation} \label{e:QFix-inc-Fix}
     Q_{\delta\gets \gamma}\Fix T_{\theta,\gamma} \subseteq \Fix T_{\theta,\delta}.
 \end{equation} Let $\bz \in \Fix T_{\theta,\gamma}$  
and $\overline{x} \in X$ from Lemma~\ref{l:known-e}\ref{l:known-e-ii} such that $ \bx_i^\gamma(\bz) = \overline{x}$ for all $ i = 1, \dots, n$, and set $\by = Q_{\delta\gets \gamma}\bz$. Our goal, in view of \eqref{e:FB-operator}, is to prove that $\bM^*\bx^\delta(\by) = 0$. From \eqref{e:unique-sol} and Lemma~\ref{l:known-e}\ref{l:known-e-iii}, \begin{equation} \label{e:Q-at-fixed-2}
            \bM \by =   \frac{\delta}{\gamma}\bM \bz + \left(1-\frac{\delta}{\gamma}\right)\left(d_1 \overline{x} , \dots, \left(d_i - \sum_{j=1}^{i-1} N_{ij}\right) \overline{x} , \dots ,\left(d_n - \sum_{j=1}^{n-1}N_{nj}\right) \overline{x}\right).
        \end{equation} First, for $i=1$, \begin{align*}
        x_1^\delta(\by) &= J_{\frac{\delta}{d_1}A_1}\left(\frac{1}{d_1}(\bM \by)_1\right) = J_{\frac{\delta}{d_1}A_1}\left(\frac{\delta}{\gamma}\frac{1}{d_1}(\bM\bz)_1 + \left(1-\frac{\delta}{\gamma}\right) \overline{x}\right) \\
        &=  J_{\frac{\delta}{d_1}A_1}\left(\frac{\delta}{\gamma}\frac{1}{d_1}(\bM\bz)_1 + \left(1-\frac{\delta}{\gamma}\right) \bx_1^\gamma(\bz)\right) \\
        &=  J_{\frac{\delta}{d_1}A_1}\left(\frac{\delta}{\gamma}\frac{1}{d_1}(\bM\bz)_1 + \left(1-\frac{\delta}{\gamma}\right) J_{\frac{\gamma}{d_1}A_1}\left(\frac{1}{d_1}(\bM \bz)_1\right)\right) \\
        & = J_{\frac{\gamma}{d_1}A_1}\left(\frac{1}{d_1}(\bM \bz)_1\right) = x_1^\gamma(\bz) =  \overline{x},
    \end{align*} where in the second equality of the first line we use \eqref{e:Q-at-fixed-2}, 
    in the third line we use \eqref{e:FB-resolvent}, and in the first equality of the last line we use Lemma~\ref{l:known-J}\ref{l:known-J-1}. 
    Next, let $i = 2, \dots, n$, and suppose for all $j = 1, \dots, i -1$, $x_j^\delta(\by) = \overline{x}$. From Assumption~\ref{a:stand}\ref{a:stand_R} and~\ref{a:stand_triangular}, by denoting $\Bar{\bx} = (\Bar{x}, \dots, \Bar{x}) \in X^n$, it follows that $\bR \Bar{\bx} = \bx$, and $\sum_{t=1}^{j-1} R_{jt}\Bar{x} = \Bar{x}$. In this manner,  similarly to  \eqref{e:Nx-at-fix}, it holds that
    \begin{equation} \label{e:aux-eq}
    (\bN \bx^\delta(\by))_{i-1} =\sum_{j=1}^{i-1} N_{ij}\Bar{x}, \text{~and~}
             \left\{\begin{aligned}
             &\text{for~} \bF_i \bar{x}:=\sum_{j=1}^{\min\{i-1,p\}}P_{ij} B_j\Bar{x}\\ 
             &\big( \bP  \bB \bR \bx^\delta(\by) \big)_{\le \min\{i-1,p\}} = \bF_i \bar{x} = \big( \bP  \bB \bR \bx^\gamma(\bz) \big)_{\le \min\{i-1,p\}}. 
             \end{aligned}\right.\end{equation} 
             Hence, \begin{align*}
        x_i^\delta(\by) &= J_{\frac{\delta}{d_i}A_i}\left(\frac{1}{d_i}(\bM\by)_i + \frac{1}{d_i}\sum_{j=1}^{i-1} N_{ij}\Bar{x} - \frac{\delta}{d_i} \bF_i \bar{x} \right) \\
        & = J_{\frac{\delta}{d_i}A_i}\left(\frac{1}{d_i}\frac{\delta}{\gamma}(\bM\bz)_i + \frac{1}{d_i}\left(1 - \frac{\delta}{\gamma}\right)\left(d_i - \sum_{j=1}^{i-1}N_{ij}\right)\Bar{x}  + \frac{1}{d_i}\sum_{j=1}^{i-1} N_{ij}\Bar{x} -\frac{\delta}{d_i} \bF_i \Bar{x} \right) \\
        & = J_{\frac{\delta}{d_i}A_i}\left(\frac{1}{d_i}\frac{\delta}{\gamma}(\bM\bz)_i + \left(1 - \frac{\delta}{\gamma}\right)\Bar{x} + \frac{1}{d_i}\frac{\delta}{\gamma} \sum_{j=1}^{i-1}N_{ij}\Bar{x} -\frac{\delta}{d_i} \bF_i \Bar{x} \right)  \\ 
        & = J_{\frac{\delta}{d_i}A_i}\left(\frac{\delta}{\gamma} \left[ \frac{1}{d_i}(\bM\bz)_i +\frac{1}{d_i} \sum_{j=1}^{i-1}N_{ij}\Bar{x}  - \frac{\gamma}{d_i} \bF_i \Bar{\bx} \right] + \left(1 - \frac{\delta}{\gamma}\right)\Bar{x}\right)  \\ 
        & = J_{\frac{\delta}{d_i}A_i}\left(\frac{\delta}{\gamma} \left[ \frac{1}{d_i}(\bM\bz)_i +\frac{1}{d_i} \big( \bN \bx^\gamma(\bz) \big)_{\leq i-1}
          - \frac{\gamma}{d_i} \big( \bP  \bB \bR \bx^\gamma(\bz) \big)_{\le \min\{i-1,p\}} \right] \right. \\ & \qquad\qquad\quad \left.+ \left(1 - \frac{\delta}{\gamma}\right)\bx^\gamma_i(\bz)\right)   \\ 
        & = \bx^\gamma_i(\bz) = \Bar{x},
    \end{align*} where in the first equality we use \eqref{e:FB-resolvent}, the second equality follows from \eqref{e:Q-at-fixed-2}, in the fifth equality we use \eqref{e:aux-eq}, and in the last line we employ Lemma~\ref{l:known-J}\ref{l:known-J-1}. 
    Altogether, we conclude that for all $i = 1, \dots, n$, $x_i^\delta(\by) = \overline{x}$, thus $\bx^\delta(\by) \in \ker(\bM^*)$ and $\by \in \Fix T_{\theta,\delta}$. Hence \eqref{e:QFix-inc-Fix} holds. In order to prove the reverse inclusion, we first show that Definition~\ref{d:FPR}\ref{a:trans_semigroup} holds. In other words, we show that for all $\gamma,\delta,\varepsilon \in \R_{++}$, \begin{equation} \label{3-group-prop}
        \text{for all~} \bz \in \Fix T_{\gamma},\;Q_{\varepsilon\gets\delta}Q_{\delta\gets\gamma}\bz=Q_{\varepsilon\gets\gamma}\bz.
    \end{equation}  We argue similarly to \cite[Eq. (34)]{atenas2025relocated}. Let $\bz \in \Fix T_{\delta,\gamma}$ and $\by = Q_{\delta\gets \gamma}\bz$. Then, from the analysis above, for all $i = 1, \dots, n$, $x_i^\delta(\by) = \overline{x} = x_i^\gamma(\bz)$, and thus $\be^\delta(\by) = \be^\gamma(\bz)$. Then\begin{equation*}\begin{aligned}     Q_{\varepsilon\gets \delta}\by & = \tfrac{\varepsilon}{\delta}\by + \left(1-\tfrac{\varepsilon}{\delta}\right)\bM^{\dagger}\be^\delta(\by)\\ %
    & = \tfrac{\varepsilon}{\delta}\by + \left(1-\tfrac{\varepsilon}{\delta}\right)\bM^{\dagger}\be^\gamma(\bz)\\ 
    & = \tfrac{\varepsilon}{\delta}\left[ \tfrac{\delta}{\gamma}\bz + \left( 1 - \tfrac{\delta}{\gamma}\right)\bM^{\dagger}\be^\gamma(\bz) 
    \right] + \left(1-\tfrac{\varepsilon}{\delta}\right)\bM^{\dagger} \be^\gamma(\bz)\\ 
        & =  \tfrac{\varepsilon}{\gamma}\bz + \left( 1 - \tfrac{\varepsilon}{\gamma}\right)\bM^{\dagger}\be^\gamma(\bz)  = Q_{\varepsilon\gets \gamma}\bz,
        \end{aligned}\end{equation*} where in the first equality we use \eqref{e:FPR-Q}, and in the third equality we employ \eqref{e:Q-at-fixed-2}. This  proves \eqref{3-group-prop}, which is  Definition~\ref{d:FPR}\ref{a:trans_semigroup}.

Next, for all $\delta,\gamma \in \R_{++}$, from \eqref{3-group-prop} with $\varepsilon = \gamma$, then $Q_{\gamma\gets\delta}Q_{\delta\gets\gamma} = Q_{\gamma\gets\gamma} = I$ on $\Fix T_{\theta,\gamma}$. Likewise, the relation in \eqref{3-group-prop} also implies $Q_{\delta\gets\gamma}Q_{\gamma\gets\delta} = Q_{\delta\gets\delta} = I$ on $\Fix T_{\theta,\delta}$. Hence 
${Q_{\delta\gets\gamma}}_{|\Fix T_{\theta,\gamma}}$ is a bijection with inverse ${Q_{\delta\gets\gamma}}_{|\Fix T_{\theta,\gamma}}^{-1} = {Q_{\gamma\gets\delta}}_{|\Fix T_{\theta,\delta}}$. Moreover, from \eqref{e:QFix-inc-Fix}, it follows that $ Q_{\gamma\gets \delta}\Fix T_{\theta,\delta} \subseteq \Fix T_{\theta,\delta}$, then $\Fix T_{\theta,\delta} \subseteq Q_{\delta \gets \gamma} \Fix T_{\theta,\delta}$. Therefore, \eqref{e:QFix-inc-Fix} holds with equality, and thus ${Q_{\delta\gets\gamma}}_{|\Fix T_{\theta,\gamma}}$ is a bijection that maps $\Fix T_{\theta,\gamma}$ to $\Fix T_{\theta,\delta}$, proving Definition~\ref{d:FPR}\ref{a:trans_bijection}.

Moreover, for every $\gamma \in \R_{++}$ and $\bz \in X^{m}$, the mapping $\Gamma\to X\colon \delta\mapsto Q_{\delta\gets \gamma}x$ is continuous by construction, as it is an affine mapping, so that Definition~\ref{d:FPR}\ref{a:trans_cont} holds.

Finally, we show Definition~\ref{d:FPR}\ref{a:trans_nonex-type}. For any $\delta,\gamma\in\R_{++}$, and $\bz,  \bw \in X^{m}$, from the definition of $Q_{\delta\gets\gamma}$, it holds that\begin{equation*}
    \begin{aligned}
        \|Q_{\delta\gets \gamma}\bz - Q_{\delta\gets \gamma}\bw\| & \le  \frac{\delta}{\gamma}\|\bz-\bw\| +  \left|1 - \frac{\delta}{\gamma} \right|\|\bM^{\dagger}\|\|\be^\gamma(\bz) - \be^\gamma(\bw)\| 
    \end{aligned} 
\end{equation*} Hence, from Lemma~\ref{l:known-e-i}, $Q_{\delta\gets\gamma}$ is Lipschitz continuous with a Lipschitz constant given by \eqref{e:Lip-const}. We therefore conclude that $Q_{\delta\gets\gamma}$ is a fixed-point relocator for $T_{\theta,\gamma}$.

\end{proof}


    



Having defined a \Qname{} for the iteration operator in \eqref{e:FB-operator}, we now proceed to state the convergence result for the family of relocated distributed forward-backward methods.

\begin{theorem}[Convergence of relocated distributed forward-backward methods] \label{t:dist-FB-convergence}
Let $A_1, \dots, A_n$ be  maximally monotone operators on $X$, and $B_1, \dots, B_{p}$ be  $\beta$-cocoercive operators on $X$, such that $\zer(\sum_{i=1}^n A_i + \sum_{i=1}^p B_i) \neq \varnothing$. Suppose Assumption~\ref{a:stand} holds. For $\theta,\gamma \in \R_{++}$,  consider the  operator $T_{\theta,\gamma}$ defined in \eqref{e:FB-operator}-\eqref{e:FB-resolvent}. Moreover, let  $(\theta_k)_\kkk $ in $\R_{++}$ be a sequence bounded and separated from zero, and $(\lambda_k)_\kkk$ in $\R_{++}$ be a sequence separated from zero, 
 and $\Gamma \subseteq (0 , 2\mu^{-1})$ for $\mu = \beta\|(P^* -R)(M^*)^\dagger\|^2$. Furthermore, let $(\gamma_k)_\kkk $ in $\Gamma$ be a convergent sequence to some $\Bar{\gamma} \in \Gamma$, such that $\sum_\kkk (\gamma_{k+1} - \gamma_k)_{+} < +\infty$ and 
 $\liminf_\kkk(2- \gamma_k\mu - 2\lambda_k\theta_k) >0$. Let $(Q_{\delta\gets\gamma})_{\delta,\gamma \in \Gamma}$ be the \Qname{s} defined in  \eqref{e:FPR-Q}. 

Given $\bz_0 \in X^{m}$, define for all $\kkk$, 
    \begin{equation} \label{e:var-FB}
        \left\{\begin{aligned}
        \bx_k & = \bx^{\gamma_k}(\bz_k) \text{~from \eqref{e:FB-resolvent}}\\
            \bw_k& = \bz_k - \lambda_k\theta_k \bM^*\bx_k\\
            \bz_{k+1}& = Q_{\gamma_{k+1}\gets\gamma_k}\bw_k,
        \end{aligned}\right.
    \end{equation} where $\bM^{\dagger}$ is the pseudo-inverse of $\bM$. Then, $(\bz_k)_\kkk$ and $(\bw_k)_\kkk$ converge weakly to some $\overline{\bz} \in \Fix T_{\liminf_\kkk \theta_k,\overline{\gamma}}$, and  $ (\bx_{k})_\kkk$ converges weakly to  $\overline{\bx} = (\overline{x}, \dots, \overline{x}) \in X^n$, such that \begin{equation} \label{e:x-zero}
        \Bar{x} = J_{\frac{\overline{\gamma}}{d_1}A_1}\left(\frac{1}{d_1}(\bM\overline{\bz})_1\right)\in \zer\left(\sum_{i=1}^n A_i + \sum_{i=1}^{p} B_i \right).
    \end{equation}

\end{theorem}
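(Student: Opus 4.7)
The strategy is to cast the iteration \eqref{e:var-FB} as a relocated fixed-point iteration \eqref{e:z-iteration} for the distributed forward-backward operator $T_{\theta_k,\gamma_k}$ of \eqref{e:FB-operator}, and then invoke Theorem~\ref{t:FPR-convergence}. Since $T_{\theta_k,\gamma_k}\bz_k = \bz_k - \theta_k\bM^*\bx_k$, the middle line of \eqref{e:var-FB} rewrites as $\bw_k = (1-\lambda_k)\bz_k + \lambda_k T_{\theta_k,\gamma_k}\bz_k$, so that $\bz_{k+1} = Q_{\gamma_{k+1}\gets\gamma_k}\bigl((1-\lambda_k)\bz_k + \lambda_k T_{\theta_k,\gamma_k}\bz_k\bigr)$ with the relocator $Q_{\delta\gets\gamma}$ from Proposition~\ref{p:FPR-Q} matches \eqref{e:z-iteration} exactly.

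To apply Theorem~\ref{t:FPR-convergence}, I would take $\Theta := [\inf_\kkk \theta_k, \sup_\kkk \theta_k]$ and replace $\Gamma$ by its closure in $(0,2\mu^{-1})$; both are closed and bounded away from zero. Lemma~\ref{l:con-ave} supplies conical $\eta(\theta,\gamma) = 2\theta/(2-\gamma\mu)$-averagedness, jointly continuous and strictly positive on $\Theta\times\Gamma$. Equation~\eqref{e:FB-operator} gives $\bz\in\Fix T_{\theta,\gamma}\iff\bM^*\bx^\gamma(\bz)=0$ independently of $\theta$, so the fixed-point set is $\theta$-invariant and nonempty by Proposition~\ref{l:known-e}\ref{l:known-e-0}. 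The bound \eqref{e:Lip-const} yields $\Lip_{\gamma_{k+1}\gets\gamma_k} - 1 \leq C|\gamma_{k+1}-\gamma_k|$ for some constant $C$, using that $(\gamma_k)_\kkk$ stays in a compact subset of $(0,2\mu^{-1})$ separated from zero and that the factor multiplying $|1-\gamma_{k+1}/\gamma_k|$ in \eqref{e:Lip-const} depends polynomially (hence continuously) on $\gamma$; summability then follows from $\sum_\kkk|\gamma_{k+1}-\gamma_k|<+\infty$, a consequence of $\sum_\kkk(\gamma_{k+1}-\gamma_k)_+ < +\infty$ combined with $\gamma_k\to\overline{\gamma}$. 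Joint continuity of $(\theta,\gamma)\mapsto T_{\theta,\gamma}\bz$ follows inductively through \eqref{e:FB-resolvent} via Lemma~\ref{l:known-J}\ref{l:known-J-2}. The parameter conditions $\liminf_\kkk\eta_k^{-1}\lambda_k(1-\eta_k\lambda_k) = \liminf_\kkk \lambda_k(2-\gamma_k\mu - 2\lambda_k\theta_k)/(2\theta_k)>0$ and $\liminf_\kkk\eta_k \geq \liminf_\kkk\theta_k>0$ are immediate. Theorem~\ref{t:FPR-convergence} then yields $\bz_k\weak\overline{\bz}\in\Fix T_{\liminf_\kkk \theta_k,\overline{\gamma}}$ and $\bz_k - T_{\theta_k,\gamma_k}\bz_k\to 0$ strongly, so that $\bw_k = \bz_k + \lambda_k(T_{\theta_k,\gamma_k}\bz_k - \bz_k)\weak\overline{\bz}$ as well. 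Proposition~\ref{l:known-e}\ref{l:known-e-ii} identifies $\bx^{\overline{\gamma}}(\overline{\bz}) = (\overline{x},\dots,\overline{x})$ with $\overline{x} = J_{\overline{\gamma}/d_1 A_1}((\bM\overline{\bz})_1/d_1)$, giving the explicit formula in \eqref{e:x-zero}, and the explicit fixed-point-to-zero correspondence underlying Proposition~\ref{l:known-e}\ref{l:known-e-0} (cf.\ \cite[Lemma~3.5]{dao2025general}) places $\overline{x}\in\zer(\sum_i A_i + \sum_j B_j)$.

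The main obstacle will be establishing $\bx_k\weak\overline{\bx} := (\overline{x},\dots,\overline{x})$. Boundedness of $(\bx_k)_\kkk$ is immediate from Lipschitz continuity of $\bx^{\gamma_k}$ (Proposition~\ref{l:known-e-i}) and boundedness of $(\bz_k,\gamma_k)_\kkk$. Since $\bM^*\bx_k = \theta_k^{-1}(\bz_k - T_{\theta_k,\gamma_k}\bz_k)\to 0$ strongly (using that $(\theta_k)_\kkk$ is separated from zero), Assumption~\ref{a:stand}\ref{a:stand_kerM} together with the closed-range theorem applied to $\bM^*|_{\range(\bM)}$ yields $\bx_k - \overline{\bx}_k\to 0$ strongly, where $\overline{\bx}_k := P_{\ker\bM^*}\bx_k = (\overline{x}_k,\dots,\overline{x}_k)$. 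Hence every weak cluster point $\tilde\bx$ of $(\bx_k)_\kkk$ satisfies $\tilde\bx = (\tilde x,\dots,\tilde x)$ for some $\tilde x\in X$, and it remains to show $\tilde x = \overline{x}$. I plan to argue by induction on $i$: for $i=1$, the resolvent identity $((\bM\bz_k)_1 - d_1\bx_{1,k})/\gamma_k\in A_1\bx_{1,k}$, combined with the weak convergence $\bz_k\weak\overline{\bz}$, the strong coincidence $\bx_{1,k} - \overline{x}_k\to 0$, and a Brezis-type limsup argument for the maximally monotone graph of $A_1$, forces $((\bM\overline{\bz})_1 - d_1\tilde x)/\overline{\gamma}\in A_1\tilde x$; uniqueness of the resolvent then gives $\tilde x = \overline{x}$. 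The inductive step for $i=2,\dots,n$ proceeds analogously, leveraging Lipschitz continuity of each $B_j$ (from cocoercivity) and the triangular structure of $N$, $P$, $R$ in Assumption~\ref{a:stand}\ref{a:stand_triangular} to pass to the limit in \eqref{e:FB-resolvent}.
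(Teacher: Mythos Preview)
Your reduction of \eqref{e:var-FB} to the abstract scheme \eqref{e:z-iteration} and the verification of the hypotheses of Theorem~\ref{t:FPR-convergence} are correct and match the paper's Proposition~\ref{p:aux-1} exactly; this yields $\bz_k\weak\overline{\bz}\in\Fix T_{\liminf\theta_k,\overline{\gamma}}$, $\bz_k-T_{\theta_k,\gamma_k}\bz_k\to 0$ strongly, and hence $\bw_k\weak\overline{\bz}$ and $\bM^*\bx_k\to 0$ strongly. The closed-range argument giving $\bx_{i,k}-\bx_{j,k}\to 0$ strongly is also fine.

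The gap is in your plan for $\bx_k\weak\overline{\bx}$. The inductive Brezis argument you sketch for $i=1$ requires $\limsup_k\langle \bx_{1,k}-\tilde x,\,v_{1,k}-\tilde v_1\rangle\le 0$ with $v_{1,k}=\frac{1}{\gamma_k}\bigl((\bM\bz_k)_1-d_1\bx_{1,k}\bigr)$. The term $-\frac{d_1}{\gamma_k}\|\bx_{1,k}-\tilde x\|^2$ is harmless, but the cross term $\frac{1}{\gamma_k}\langle \bx_{1,k}-\tilde x,(\bM\bz_k)_1-(\bM\overline{\bz})_1\rangle$ is an inner product of two sequences that converge \emph{only weakly} to zero; there is no reason for its $\limsup$ to be nonpositive (think of $e_k\otimes e_k$ in $\ell^2$). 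The strong coincidence $\bx_{1,k}-\overline{x}_k\to 0$ does not help, since $\overline{x}_k$ is itself only weakly convergent. Treating each $A_i$ separately therefore does not close.

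The paper bypasses this by packaging all $n$ inclusions into a single maximally monotone operator $\mathcal{S}$ on $X^n$ (Lemma~\ref{l:aux}) whose first $n-1$ components are the \emph{inverses} $(A_i+F_i)^{-1}$, coupled by a skew-symmetric block. This swap makes the ``image'' vector $\bp^{\gamma_k}(\bz_k)$ consist of the differences $\bx_{i,k}-\bx_{n,k}$ (plus a last component that collapses via Assumption~\ref{a:stand}\ref{a:stand_N}), which you already know converge \emph{strongly} to zero. Then ordinary weak-strong demiclosedness of $\mathcal{S}$ gives $0\in\mathcal{S}(\overline{q})$, from which the resolvent formula \eqref{e:x-zero} and the zero inclusion drop out (Proposition~\ref{p:aux-2}). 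The point is that the product/inverse construction manufactures the strong convergence needed for demiclosedness out of the structural identity $\bM^*\bx_k\to 0$; a coordinate-wise argument cannot do this.
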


The proof of Theorem~\ref{t:dist-FB-convergence} can be found in Appendix~\ref{a:convergence}. Barring the details related to \Qname{s}, this proof is an adaptation of the convergence analysis in \cite{dao2025general} tailored to the fixed-point relocator framework.

\begin{remark}[On the \Qname{} Lipschitz constants] \label{r:Lip-const}
   The condition $\sum_\kkk (\gamma_{k+1} - \gamma_k)_{+} < +\infty$ above relates to the condition $\sum_\kkk (\Lip_{\gamma_{k+1}\gets \gamma_k} -1) < +\infty$ for the Lipschitz constants of the \Qname{s} in Theorem~\ref{t:FPR-convergence}. Indeed, since $(\gamma_k)_\kkk$ is bounded, then there exist $\gamma_{\rm low}, \gamma_{\rm high} \in \R_{++}$ such that for all $\kkk$, $\gamma_{\rm low} \leq \gamma_k \leq \gamma_{\rm high}$. From \eqref{e:C-lip},  $\gamma \mapsto C_i(\gamma)$ are continuous mappings on $\R_{++}$ for all $i = 1, \dots, n$, thus all $ C_i([\gamma_{\rm low}, \gamma_{\rm high}])$ are uniformly bounded. Hence, from \eqref{e:Lip-const}, there exists $\Bar{C} \in \R_{++}$ such that for all $\kkk$, \begin{equation*}
    \Lip_{\gamma_{k+1}\gets\gamma_k} -1 \leq \left|\frac{\gamma_{k+1}}{\gamma_k} -1\right| +  \left|1 - \frac{\gamma_{k+1}}{\gamma_k}\right|  \|\bM^{\dagger}\|\Bar{C}   \leq \frac{|\gamma_{k+1}-\gamma_k|}{\gamma_{\rm low}}(1 +   \|\bM^{\dagger}\|\Bar{C}).
\end{equation*} From \cite[Remark 4.10 (iii)]{atenas2025relocated}, having $\gamma_k \to \Bar{\gamma}$ such that $\sum_\kkk (\gamma_{k+1} - \gamma_k)_+ < +\infty$ is equivalent to having $(\gamma_k)_\kkk$ bounded and $\sum_\kkk |\gamma_{k+1} - \gamma_k| < +\infty$. Hence, $\sum_\kkk (\gamma_{k+1} - \gamma_k)_{+} < +\infty$ implies  $\sum_\kkk (\Lip_{\gamma_{n+1}\gets\gamma_n} -1) < +\infty$. 
   

    
\end{remark}



    


\section{Graph-based distributed forward-backward methods} \label{s:graph}

The general framework in \cite{dao2025general} includes as special cases several distributed splitting algorithms of forward-backward nature. In this section, we apply the results of Section~\ref{s:FB} to derive relocated extensions of the  graph-based forward-backward method introduced in \cite{aragon2025forward}, and  propose a variable stepsize extension of the three-operator Davis--Yin splitting method \cite{davis2017three}.

\subsection{Relocated forward-backward algorithms devised by graphs} \label{s:graph-FB}

Originally designed in \cite{bredies2024graph} for resolvent splitting methods, graph-based extensions of operator splitting methods define their iterates based on the topology of a given graph. In this section, we follow the presentation in  \cite{aragon2025forward} and its generalisation in \cite[Example 4.1]{dao2025general}.

Let $n \geq 2$ and $G = (\mN, \mE)$ be a  directed graph with set of nodes $\mN = \{1, \dots, n\}$, and set of arcs $\mE$ with $|\mE| \geq n-1$. We use the notation \begin{equation*}
    i \tto j \iff  (i,j) \in \mE.
\end{equation*} Throughout this section, suppose \begin{equation} \label{e:diG-a1}
     i \tto j \implies i <j.
\end{equation} Observe that \eqref{e:diG-a1} implies that there are no self-loops in the graph (for no node $i \in \mN$, $i\tto i$ holds true). For $i \in \mN$, $\kappa_i := |\{j \in \mN: i \tto j \text{~~or~~} j \tto i\}|$ denotes the \emph{degree} of node $i$. 
Likewise, $\kappa_i^+ := |\{j \in \mN:  j \tto i \}|$ denotes the \emph{in-degree} of $i$, and  $\kappa_i^- := |\{j \in \mN:  i \tto j \}|$ denotes the \emph{out-degree} of $i$. Naturally, for all $ i \in \mN$, $\kappa_i = \kappa_i^+ + \kappa_i^-$. We also denote $\text{Deg}(G) := \diag(\kappa_1, \dots, \kappa_n)$. 

\begin{assumption} \label{assump:1}
    Suppose $G = (\mN, \mE)$ is a directed graph that \begin{enumerate}
    \item is connected:  for any two distinct nodes $i,j \in \mN$, there exist distinct nodes $i_1, \dots, i_r \in \mN$, such that for all $s= 0, \dots, r+1$, $i_s \tto i_{s+1}$ or $i_{s+1} \tto i_s$, where $i_0 = i$ and $i_{r+1} = j$, 
    \item is simple: there are no multiple arcs between any two distinct nodes. 
\end{enumerate}

\end{assumption}

We also define the \emph{incidence matrix} of $G$, denoted Inc$(G) \in \R^{n \times |\mE|}$, as: for $i,j \in \mN$ and $e \in \mE$, $\Inc(G)_{i, e} = 1$ if $e = (i,j)$, $\Inc(G)_{i,e} = -1$ if $e = (j,i)$, $\Inc(G)_{i,e} = 0$ otherwise. The \emph{Laplacian} matrix of $G$ is defined as $\text{Lap}(G) := \Inc(G)\Inc(G)^* \in \R^{n \times n}$, a (symmetric) positive semidefinite matrix satisfying: for $ i,j\in \mN$, $\text{Lap}(G)_{i,i} = \kappa_i$, $\text{Lap}(G)_{i,j} =  -1$ if $i \tto j$ or $j \tto i$, and $\text{Lap}(G)_{i,j} = 0$ otherwise. Let $G^\prime = (\mN, \mE^\prime)$ be a connected subgraph of $G$ with the same set of nodes $\mN$. 

In the next result, we formulate the iteration operator of forward-backward algorithms devised by graphs from \cite{aragon2025forward}.

\begin{lemma}
    Let $\gamma,\theta \in \R_{++}$. The iteration operator $T_{\theta,\gamma}: X^{n-1} \to X^{n-1}$ of  forward-backward algorithms devised by graphs from \cite{aragon2025forward} is given by, for $\bz \in X^{n-1}$ and $i= 1, \dots, n-1$, \begin{equation} \label{e:graph-FB-operator}
    (T_{\theta,\gamma}\bz)_i = \bz_i -  \theta \sum_{j=1}^n \Inc(G^\prime)_{ji}\bx^\gamma_j(\bz),
\end{equation} where \begin{equation} \label{e:graph-resolvents}
\left\{\begin{aligned}
    \bx_1^\gamma(\bz) &= J_{\frac{\gamma}{\kappa_1}A_1}\left(\frac{1}{\kappa_1}\sum_{j=1}^{n-1} \Inc(G^\prime)_{1j}\bz_j\right)\\
    \bx_i^\gamma(\bz) &= J_{\frac{\gamma}{\kappa_i}A_i}\left(\frac{2}{\kappa_i}\sum_{j\tto i}\bx_j^\gamma(\bz) - \frac{\gamma}{\kappa_i}B_{i-1}\bx^\gamma_{h(i)}(\bz) + \frac{1}{\kappa_i}\sum_{j=1}^{n-1} \Inc(G)_{ij}\bz_j\right)\text{~for~} i=2,\dots,n.
\end{aligned}\right.
\end{equation} 
\end{lemma}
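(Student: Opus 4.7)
The plan is to realise the graph-based forward-backward iteration operator of \cite{aragon2025forward} as a specific instance of the general distributed operator $T_{\theta,\gamma}$ from \eqref{e:FB-operator}--\eqref{e:FB-resolvent}. To do so, I would exhibit coefficient matrices $D$, $M$, $N$, $P$, $R$ which encode the graph data, verify Assumption~\ref{a:stand}, and then substitute them into the general formulas to recover \eqref{e:graph-FB-operator}--\eqref{e:graph-resolvents}.

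Concretely, I would take $D = \mathrm{Deg}(G) = \diag(\kappa_1,\dots,\kappa_n)$ (so $d_i = \kappa_i$), $M = \mathrm{Inc}(G') \in \R^{n\times(n-1)}$, and $N \in \R^{n\times n}$ strictly lower triangular with $N_{ij} = 2$ if $j \tto i$ in $G$ and $N_{ij} = 0$ otherwise, so that $(\bN\bx)_{\le i-1} = 2\sum_{j\tto i}\bx_j$. For the forward block, set $p = n-1$ and choose lower triangular $P$, $R$ whose sparsity patterns concentrate the $i$-th row on $B_{i-1}\bx_{h(i)}$; the history function $h$ determines the single nonzero entry of each relevant row of $R$, while $P$ picks out the single index $j = i-1$ in each row. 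Items (i)--(iii), (v)--(vi) of Assumption~\ref{a:stand} then follow directly: (i) is equivalent to connectedness of $G'$; the marginal identities (ii), (iii) are built into the constructions of $P$ and $R$; (v) is the handshake lemma, $\sum_{i,j}N_{ij} = 2|\mE| = \sum_i\kappa_i$; and (vi) holds by construction.

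The main obstacle is the semidefinite condition (iv), $2D - N - N^* - MM^* \succeq 0$. Here the key observation is that $N + N^* = 2\,\mathrm{Adj}(G)$, so $2D - N - N^* = 2\,\mathrm{Lap}(G)$, and $MM^* = \mathrm{Inc}(G')\mathrm{Inc}(G')^* = \mathrm{Lap}(G')$, reducing (iv) to
\[
2\,\mathrm{Lap}(G) - \mathrm{Lap}(G') \;\succeq\; \mathrm{Lap}(G) \;\succeq\; 0,
\]
where the first inequality uses subgraph monotonicity of the Laplacian: since $G' \subseteq G$, one has $\mathrm{Lap}(G') \preceq \mathrm{Lap}(G)$.

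With Assumption~\ref{a:stand} in place, the remaining work is a direct bookkeeping substitution into \eqref{e:FB-operator}--\eqref{e:FB-resolvent}. For $i = 1$, $\tfrac{1}{d_1}(\bM\bz)_1 = \tfrac{1}{\kappa_1}\sum_{j=1}^{n-1}\mathrm{Inc}(G')_{1j}\bz_j$, giving the first line of \eqref{e:graph-resolvents}. For $i \geq 2$, the three contributions $\tfrac{1}{d_i}(\bM\bz)_i$, $\tfrac{1}{d_i}(\bN\bx^\gamma(\bz))_{\le i-1}$, and $\tfrac{\gamma}{d_i}(\bP\bB\bR\bx^\gamma(\bz))_{\le \min\{i-1,p\}}$ collapse respectively to $\tfrac{1}{\kappa_i}\sum_j \mathrm{Inc}(G')_{ij}\bz_j$, $\tfrac{2}{\kappa_i}\sum_{j\tto i}\bx^\gamma_j(\bz)$, and $\tfrac{\gamma}{\kappa_i}B_{i-1}\bx^\gamma_{h(i)}(\bz)$, matching the second line. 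Finally, the outer update $\bz - \theta \bM^*\bx^\gamma(\bz)$ has $i$-th coordinate $\bz_i - \theta\sum_{j=1}^n\mathrm{Inc}(G')_{ji}\bx^\gamma_j(\bz)$, recovering \eqref{e:graph-FB-operator}. Once the Laplacian inequality for (iv) is secured, the rest is purely a matter of aligning indices.
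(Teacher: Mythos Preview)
Your approach is correct and essentially matches the paper's: both realise the graph-based iteration as an instance of the general operator \eqref{e:FB-operator}--\eqref{e:FB-resolvent} by choosing $M = \Inc(G')$ and encoding the graph adjacency in $N$, then substituting. The only difference is a harmless normalisation: the paper takes $D = \tfrac{1}{2}\diag(\kappa_i)$ and $N_{ij}\in\{0,1\}$, which forces a change of variables $(\gamma,\theta,\bz)\gets(2\gamma,2\theta,2\bz)$ at the end, whereas you take $D=\diag(\kappa_i)$ and $N_{ij}\in\{0,2\}$, absorbing the factor of $2$ upfront and avoiding the rescaling. Your route is slightly more direct, and you also spell out the verification of Assumption~\ref{a:stand}\ref{a:stand_neg_semidef} via $2D-N-N^*=2\,\mathrm{Lap}(G)$ and Laplacian monotonicity for subgraphs, which the paper simply delegates to \cite[Example~4.1]{dao2025general}; otherwise the arguments are the same.
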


\begin{proof} As shown in \cite[Example 4.1]{dao2025general}, the following selection of matrices satisfies Assumption~\ref{a:stand}: $M = \Inc(G^\prime)$, $N \in \R^{n\times n}$ defined by $N_{ij} = 1$ if $j\tto i$, and $N_{ij} = 0$ otherwise, and $D = \frac{1}{2} \diag(\kappa_i)_{i=1}^n$. For the methods introduced in \cite{aragon2025forward}, $|\mE| = p = n-1 $, so that we take $P^\top = [0_{(n-1)\times 1} \;|\; I_{n-1} ]$, and defining $h: \mN\setminus\{1\} \to \mN\setminus\{n\}$ such that for all $i \in \mN$, $h(i)\tto i$, we take $R \in \R^{(n-1)\times n}$ as: for $i=1,\dots,n-1$, $R_{i,h(i+1)}= 1$, and $R_{ij}=0$ otherwise for any $j \in \mN$. In this manner, we recover the algorithmic framework introduced in \cite{aragon2025forward}, as described in \cite[Example 4.1]{dao2025general}. Indeed, since for all $i \in \mN$, $d_i = \frac{1}{2}\kappa_i$, then using the change of variables \begin{equation} \label{e:graph-change-of-variables}
    \gamma \gets 2\gamma, \; \theta \gets 2\theta, \; \bz \gets 2\bz,
\end{equation} the iteration operator in \eqref{e:FB-operator}-\eqref{e:FB-resolvent} yields \eqref{e:graph-FB-operator}-\eqref{e:graph-resolvents}.\end{proof}

In the next result, for graph-based forward-backward methods, we show that we recover the same \Qname{} as \cite[Section 5]{atenas2025relocated}, originally devised for graph-based pure resolvent splitting methods \cite{bredies2024graph}.

\begin{proposition}[Graph-based fixed-point relocators] \label{p:graph-FPR}

Given $\gamma \in \R_{++}$, define $\tilde{\be}^\gamma: X^{n-1} \to X^{n-1}$ as, for all $\bz \in X^{n-1}$, \begin{equation*}
    \tilde{\be}^\gamma(\bz) := P_{\range(\bM)}\left(( \kappa_i -  2\kappa_i^+) \bx^\gamma_i(\bz) \right)_{i=1}^n.
\end{equation*} Then, for all $\gamma,\delta \in \R_{++}$, $\bz \in X^{n-1}$,  the operator defined as \begin{equation} \label{e:graph-FPR}
    \tilde{Q}_{\delta\gets \gamma}\bz = \tfrac{\delta}{\gamma}\bz +  \left(1 - \tfrac{\delta}{\gamma} \right)\Inc(G^\prime)^{\dagger}\tilde{\be}^\gamma(\bz)
\end{equation} is a \Qname{} for the operator \eqref{e:graph-FB-operator}-\eqref{e:graph-resolvents} with Lipschitz constant \begin{equation*}
    \Lip_{\delta\gets\gamma} = \max\left\{1, \frac{\delta}{\gamma} + \left| 1 - \frac{\delta}{\gamma} \right|\|\Inc(G^\prime)^\dagger\|\sqrt{C_1^2 + \sum_{i=2}^n \left( C_i(\gamma) + \sum_{j=1}^{i-1}\frac{C_j(\gamma)}{d_j}\right)^2}\right\},
\end{equation*}  where $C_1 = \sqrt{m}\|\Inc(G^\prime)\|$ and for $i=2,\dots,n$, $C_i(\gamma) = \sqrt{m}\|\Inc(G^\prime)\| + \sum_{j=1}^{i-1}\frac{C_j(\gamma)}{d_j} + \gamma\beta\sum_{j=1}^{i-1}\sum_{t=1}^{j-1}\frac{C_t(\gamma)}{d_t}$. We set $C_1(\gamma) := C_1$ for ease of notation.
    
\end{proposition}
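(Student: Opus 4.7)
The plan is to invoke Proposition~\ref{p:FPR-Q} with the matrix data $D = \tfrac{1}{2}\diag(\kappa_i)_{i=1}^n$, $\bM = \Inc(G')\otimes\Id$, $N_{ij} = 1$ if $j\tto i$ and $0$ otherwise, together with the choices of $P$ and $R$ recalled above the statement (all of which satisfy Assumption~\ref{a:stand} by \cite[Example~4.1]{dao2025general}), and then to appeal to Remark~\ref{r:FPR} (multiple fixed-point relocators) to replace the general expression for $\be^\gamma$ by the simpler $\tilde{\be}^\gamma$ of \eqref{e:graph-FPR}. Under the change of variables \eqref{e:graph-change-of-variables}, Proposition~\ref{p:FPR-Q} produces a \Qname{} $Q_{\delta\gets\gamma}$ for $(T_{\theta,\gamma})_{\gamma\in\R_{++}}$, so it suffices to show that $\tilde{Q}_{\delta\gets\gamma}$ coincides with $Q_{\delta\gets\gamma}$ on $\Fix T_{\theta,\gamma}$ and is Lipschitz continuous with the stated constant.

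For the agreement on fixed points, I would fix $\bz \in \Fix T_{\theta,\gamma}$ and use Proposition~\ref{l:known-e}\ref{l:known-e-ii} to deduce $\bx^\gamma_i(\bz) = \bar{x}$ for a common $\bar{x} \in X$. The inner expression defining $\tilde{\be}^\gamma(\bz)$ then reduces to $((\kappa_i - 2\kappa_i^+)\bar{x})_{i=1}^n$, and since by the handshake identity $\sum_i \kappa_i = 2|\mE| = 2\sum_i\kappa_i^+$, its coordinates sum to zero, so by \eqref{e:range-M} the vector already lies in $\range(\bM)$ and the projection acts as the identity. On the other hand, Proposition~\ref{l:known-e}\ref{l:known-e-iii} gives the $i$-th block of $\be^\gamma(\bz)$ as $(\tfrac{\kappa_i}{2} - \kappa_i^+)\bar{x}$; after accounting for the factor of two from the rescaling $\bz \gets 2\bz$ in \eqref{e:graph-change-of-variables}, the two expressions agree, and hence $\tilde{Q}_{\delta\gets\gamma}\bz = Q_{\delta\gets\gamma}\bz$.

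The Lipschitz continuity of $\tilde{Q}_{\delta\gets\gamma}$ with the stated constant is then obtained by repeating the argument of Proposition~\ref{l:known-e-i} for the simpler mapping $\tilde{\be}^\gamma$: induction on $i$ yields Lipschitz constants $C_i(\gamma)/d_i$ for each $\bx^\gamma_i$, and then nonexpansiveness of $P_{\range(\bM)}$, the triangle inequality, and the fact that the nonzero entries of $N$, $P$, and $R$ all equal $1$ (so that the generic factors $\|\bN\|$, $\|\bP\|$, $\|\bR\|$ in Proposition~\ref{l:known-e-i} reduce to $1$) yield the expression claimed in the statement. With Lipschitz continuity established, Remark~\ref{r:FPR} concludes that $\tilde{Q}_{\delta\gets\gamma}$ is a \Qname{} for $(T_{\theta,\gamma})_{\gamma\in\R_{++}}$.

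The main obstacle I foresee is the bookkeeping associated with the change of variables \eqref{e:graph-change-of-variables}: the rescalings $\gamma \gets 2\gamma$ and $\bz \gets 2\bz$ must be tracked carefully to match $\be^\gamma$ from Proposition~\ref{p:FPR-Q} (formulated with $d_i = \kappa_i/2$) to $\tilde{\be}^\gamma$ (written directly in graph variables with the factor $\kappa_i - 2\kappa_i^+$). Once this alignment is verified, the equality on fixed points reduces to the elementary identity $\sum_i(\kappa_i - 2\kappa_i^+) = 0$, and the remainder of the proof is routine.
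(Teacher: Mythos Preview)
Your proposal is correct and follows essentially the same route as the paper: invoke Proposition~\ref{p:FPR-Q} for the graph matrix data, show that $Q_{\delta\gets\gamma}$ and $\tilde Q_{\delta\gets\gamma}$ agree on $\Fix T_{\theta,\gamma}$ via Proposition~\ref{l:known-e}\ref{l:known-e-ii} (the paper writes $\sum_{j\tto i}\bx_j^\gamma(\bz)=\kappa_i^+\bar x$ directly, which gives $\be^\gamma(\bz)=\tfrac12\tilde\be^\gamma(\bz)$ and absorbs the factor~$2$ from \eqref{e:graph-change-of-variables}), verify Lipschitz continuity, and conclude via Remark~\ref{r:FPR}. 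One small correction: the reason the factors $\|\bN\|,\|\bP\|,\|\bR\|$ disappear in the stated $C_i(\gamma)$ is not that these matrix norms equal~$1$ (they generally do not for $0$--$1$ matrices), but that in the proof of Proposition~\ref{l:known-e-i} the crude bound $|N_{ij}|\le\|\bN\|$ in \eqref{e:norm-ineq} can be replaced by $|N_{ij}|\le 1$ directly, since all entries of $N,P,R$ lie in $\{0,1\}$.
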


\begin{proof}

In this setting and considering the change of variables \eqref{e:graph-change-of-variables}, the \Qname{} in Proposition~\ref{p:FPR-Q} takes the following form: for all $\gamma,\delta \in \R_{++}$, $\bz \in X^{n-1}$, \begin{equation*}
    Q_{\delta\gets \gamma}\bz = \tfrac{\delta}{\gamma}\bz +  2\left(1 - \tfrac{\delta}{\gamma} \right)\Inc(G^\prime)^{\dagger}\be^\gamma(\bz),
\end{equation*} where \begin{equation*}
    \be^\gamma(\bz) = P_{\range(\bM)}\left(\frac{1}{2}\kappa_i \bx^\gamma_i(\bz) - \sum_{j\tto i} \bx^\gamma_j(\bz)\right)_{i=1}^n.
\end{equation*} Given $\theta \in \R_{++}$ and  $\bz \in \Fix T_{\theta,\gamma}$, Proposition~\ref{l:known-e}\ref{l:known-e-ii} yields \begin{equation*}
    \sum_{j\tto i} \bx^\gamma_j(\bz) = \kappa_i^+ \Bar{x} = \kappa_i^+ x^\gamma_i(\bz).
\end{equation*} Hence, for all $\bz \in \Fix T_{\theta,\gamma}$, $\be^\gamma(\bz) = \frac{1}{2} \tilde{\be}^\gamma(\bz)$, and thus $Q_{\delta\gets\gamma}\bz = \tilde{Q}_{\delta\gets\gamma}\bz$. Furthermore, it follows directly from Proposition~\ref{p:FPR-Q} that $\tilde{Q}_{\delta\gets\gamma}$ is Lipschitz continuous with the constant specified above.  In view of Remark~\ref{r:FPR}, the result follows.


\end{proof}

\begin{remark}\label{r:cheap-reloc}
 In practice, using the \Qname{} in \eqref{e:graph-FPR} may require extra resolvent evaluations, as argued in \cite[Section 5.2]{atenas2025relocated}, which increases the per-iteration cost of the resulting algorithm. More specifically, given $\bz \in X^{n-1}$, $\delta,\gamma \in \R_{++}$, for each $i=1,\dots, n$, the $i$-th component of the \Qname{} in \eqref{e:graph-FPR} is determined by the incidence matrix via the term \begin{equation*}
    \begin{aligned}
        \big(\Inc(G^\prime)^{\dagger}\tilde{\be}^\gamma(\bz)\big)_i &= \sum_{j=1}^n\Inc(G^\prime)^{\dagger}_{ij}\tilde{\be}^\gamma_j(\bz) \\
        & = \sum_{j=1}^n\Inc(G^\prime)^{\dagger}_{ij}\left((\kappa_j - 2\kappa_j^+){\bx}^\gamma_j(\bz) - \frac{1}{n}\sum_{\ell=1}^n(\kappa_\ell - 2\kappa_\ell^+){\bx}^\gamma_\ell(\bz) \right)\\
        & = \sum_{j=1}^n\Inc(G^\prime)^{\dagger}_{ij}(\kappa_j - 2\kappa_j^+){\bx}^\gamma_j(\bz) - \frac{1}{n}\sum_{\ell=1}^n(\kappa_\ell - 2\kappa_\ell^+){\bx}^\gamma_\ell(\bz)\sum_{j=1}^n\Inc(G^\prime)^{\dagger}_{ij}.
    \end{aligned}
\end{equation*} In view of \eqref{e:var-FB}, in each iteration of the relocated fixed-point iteration, the evaluation of the $i$-th component  $Q_{\delta\gets\gamma}^i\bz$ requires, in principle, the calculation of $\bx^{\gamma}_i(\bz)$ and $\bx^{\gamma}_i(\bw)$, where $\bw = T_{\theta,\gamma}\bz$. In total, in each iteration one would need to evaluate $2n$ resolvents, while in a traditional fixed-point iteration, one would only need $n$ resolvent evaluations. In order to overcome this issue, we will use Lemma~\ref{l:known-J}\ref{l:known-J-1} and Remark~\ref{r:FPR} to construct \Qname{s} that do not require extra resolvent evaluations, dependent on the structure of the incidence matrix. These ``cheaper'' \Qname{s} recycle previous resolvent evaluations, and have simpler Lipschitz constants.

\end{remark}


The following result introduces different \Qname{s} for three different graph structures. 

\begin{proposition}[Graph-dependent fixed-point relocators] \label{p:graph-FPR-cheap}

Given a graph $G = (\mN, \mE)$ satisfying \eqref{e:diG-a1} and Assumption~\ref{assump:1}, consider the following directed trees:
\begin{enumerate}
    
    \item \label{p:inward-star} Inward star-shaped subgraph:  $\widecheck{G} = (\mN, \widecheck{\mE})$,  $\widecheck{\mE} = \{(i,n): i = 1, \dots, n-1 \}$. Define for all $\bz \in X^{n-1}$ and $\delta,\gamma \in \R_{++}$,\begin{equation} \label{e:in-star-FPR}\widecheck{Q}_{\delta\gets\gamma}\bz := \frac{\delta}{\gamma}\bz +  \left( 1 - \frac{\delta}{\gamma} \right) {\diag(\kappa_i - 2\kappa_i^+)_{i=1}^{n-1}} \otimes \bx^\gamma_1(\bz).
\end{equation}

    \item \label{p:outward-star} Outward star-shaped subgraph: $\widehat{Q} = (\mN, \widehat{\mE})$, $\widehat{\mE} = \{(n,i): i = 2, \dots, n \}$. Define for all $\bz \in X^{n-1}$ and $\delta,\gamma \in \R_{++}$, \begin{equation} \label{e:out-star-FPR}\widehat{Q}_{\delta\gets\gamma}\bz := \frac{\delta}{\gamma}\bz -  \left( 1 - \frac{\delta}{\gamma} \right) {\diag(\kappa_i - 2\kappa_i^+)_{i=2}^n} \otimes \bx^\gamma_1(\bz).
\end{equation}

\item \label{p:sequential} Sequential subgraph: $\Bar{G} = (\mN, \Bar{\mE})$, $\Bar{\mE} = \{(i,i+1): i=1,\dots,n-1\}$. Define for all $\bz \in X^{n-1}$, $\delta,\gamma \in \R_{++}$, and $i=1,\dots, n-1$, \begin{equation} \label{e:seq-FPR}\Bar{Q}_{\delta\gets\gamma}^i\bz := \frac{\delta}{\gamma} \bz + \left(1-\frac{\delta}{\gamma}\right) {\diag\left(\sum_{j=1}^i\kappa_j - 2\kappa_j^+\right)_{i=1}^{n-1}} \otimes \bx^\gamma_1(\bz).
\end{equation} 
\end{enumerate}
Then, if for each pair $(Q^\prime,G^\prime) \in \{(\widecheck{Q},\widecheck{G}),(\widehat{Q},\widehat{G}),(\Bar{Q},\Bar{G})\}$,  $G^\prime$ is a subgraph of $G$, then for all $ \delta,\gamma, \theta\in \R_{++}$, $Q^\prime_{\delta\gets\gamma}: X^{n-1} \to X^{n-1}$ defines a \Qname{} for the iteration operator $T_{\theta,\gamma}$ defined by the incidence matrix of $G^\prime$, and  \begin{equation} \label{e:recycle}
    \bx_1^\gamma(\bz) = \bx_1^\delta(Q^\prime_{\delta\gets \gamma}\bz). 
\end{equation}
    
\end{proposition}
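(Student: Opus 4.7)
The plan is to invoke Remark~\ref{r:FPR}, which states that any Lipschitz continuous operator coinciding with a \Qname{} on $\Fix T_{\theta,\gamma}$ is itself a \Qname{}. Accordingly, for each of the three candidates $\widecheck{Q}$, $\widehat{Q}$, $\Bar{Q}$, I will (a) verify equality with the general \Qname{} $\tilde{Q}_{\delta\gets\gamma}$ from Proposition~\ref{p:graph-FPR} on $\Fix T_{\theta,\gamma}$, (b) argue Lipschitz continuity, and (c) establish the recycling identity \eqref{e:recycle}.

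For (a), fix $\bz \in \Fix T_{\theta,\gamma}$ and set $\Bar{x} := \bx_1^\gamma(\bz)$; by Proposition~\ref{l:known-e}\ref{l:known-e-ii}, $\bx_i^\gamma(\bz) = \Bar{x}$ for all $i = 1,\dots,n$. The key observation is that $\sum_{i=1}^n (\kappa_i - 2\kappa_i^+) = 2|\mE^\prime| - 2|\mE^\prime| = 0$, so $((\kappa_i - 2\kappa_i^+)\Bar{x})_{i=1}^n \in \range(\bM)$ by \eqref{e:range-M} and $P_{\range(\bM)}$ acts as the identity in $\tilde{\be}^\gamma(\bz)$. I then invert $\Inc(G^\prime)$ on $\range(\bM)$: for the inward star $\widecheck{G}$, the columns have the form $\be_i - \be_n$, so the pre-image is $\bu_i = (\kappa_i - 2\kappa_i^+)\Bar{x} = \Bar{x}$ for $i=1,\dots,n-1$; for the outward star $\widehat{G}$, the entries $\kappa_i - 2\kappa_i^+ = -1$ at nodes $i=2,\dots,n$ together with the minus sign in \eqref{e:out-star-FPR} yield $\bu_i = \Bar{x}$ again; for the sequential tree $\Bar{G}$, the bidiagonal incidence matrix telescopes to $\bu_i = \sum_{j=1}^i (\kappa_j - 2\kappa_j^+)\Bar{x}$. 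Substituting $\Bar{x} = \bx_1^\gamma(\bz)$ in each case recovers exactly \eqref{e:in-star-FPR}, \eqref{e:out-star-FPR}, and \eqref{e:seq-FPR} on the fixed-point set.

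For (b), each $Q^\prime_{\delta\gets\gamma}$ is an affine combination of $\bz$ and the mapping $\bz \mapsto \bx_1^\gamma(\bz)$, and the latter is Lipschitz by the bound \eqref{e:x_1-Lip} obtained in the proof of Proposition~\ref{l:known-e-i}; therefore $Q^\prime_{\delta\gets\gamma}$ is Lipschitz on $X^{n-1}$, and Remark~\ref{r:FPR} concludes that it is a \Qname{} for $T_{\theta,\gamma}$. For (c), in each of the three spanning trees node $1$ has no incoming edges ($\kappa_1^+ = 0$), so only outgoing-edge coordinates contribute to $\sum_j \Inc(G^\prime)_{1,j}(Q^\prime_{\delta\gets\gamma}\bz)_j$. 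A direct inspection of \eqref{e:in-star-FPR}--\eqref{e:seq-FPR} at these coordinates yields
\begin{equation*}
\frac{1}{\kappa_1}\sum_{j=1}^{n-1} \Inc(G^\prime)_{1,j} (Q^\prime_{\delta\gets\gamma}\bz)_j = \frac{\delta}{\gamma}\cdot\frac{1}{\kappa_1}\sum_{j=1}^{n-1} \Inc(G^\prime)_{1,j}\bz_j + \left(1-\frac{\delta}{\gamma}\right)\bx_1^\gamma(\bz),
\end{equation*}
and since $\bx_1^\gamma(\bz) = J_{\frac{\gamma}{\kappa_1}A_1}\bigl(\tfrac{1}{\kappa_1}\sum_j \Inc(G^\prime)_{1,j}\bz_j\bigr)$, Lemma~\ref{l:known-J}\ref{l:known-J-1} immediately gives $\bx_1^\delta(Q^\prime_{\delta\gets\gamma}\bz) = \bx_1^\gamma(\bz)$, which is \eqref{e:recycle}. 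The main bookkeeping obstacle is the per-graph pseudo-inverse computation in (a) and the sign tracking in (c) coming from the orientation of $\Inc(G^\prime)$; once these are dispatched, the proof reduces to a single application of Lemma~\ref{l:known-J}\ref{l:known-J-1} together with Remark~\ref{r:FPR}.
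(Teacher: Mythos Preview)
Your overall strategy matches the paper's proof exactly: show that each $Q'_{\delta\gets\gamma}$ agrees with the reference relocator $\tilde Q_{\delta\gets\gamma}$ of Proposition~\ref{p:graph-FPR} on $\Fix T_{\theta,\gamma}$, check Lipschitz continuity, invoke Remark~\ref{r:FPR}, and then establish \eqref{e:recycle} via Lemma~\ref{l:known-J}\ref{l:known-J-1}. The paper carries out step~(a) by computing the entries of $\Inc(G')^\dagger$ explicitly and plugging them into the expression from Remark~\ref{r:cheap-reloc}; your variant of solving $\Inc(G')\bu = \tilde\be^\gamma(\bz)$ directly on $\range(\bM)$ is a legitimate shortcut that leads to the same place.

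That said, you repeatedly conflate degrees in $G$ with degrees in the spanning tree $G'$. The quantities $\kappa_i$ and $\kappa_i^+$ appearing in \eqref{e:in-star-FPR}--\eqref{e:seq-FPR} and in $\tilde\be^\gamma$ are degrees in the ambient graph $G$, not in $G'$. Hence your intermediate claims ``$(\kappa_i-2\kappa_i^+)\bar x = \bar x$'' for the inward star and ``$\kappa_i-2\kappa_i^+ = -1$ at nodes $i=2,\dots,n$'' for the outward star are false in general; they hold only when $G=G'$. Your handshaking justification ``$2|\mE'|-2|\mE'|=0$'' should read $|\mE|$, and in step~(c) the reason $\kappa_1^+=0$ is assumption~\eqref{e:diG-a1} on $G$ (node $1$ can have no predecessors in $G$), not the structure of the tree $G'$. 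Once you correct these --- so that $\bu_i = (\kappa_i-2\kappa_i^+)\bar x$ for the inward star and $\bu_i = -(\kappa_{i+1}-2\kappa_{i+1}^+)\bar x$ for the outward star, with the $\kappa$'s taken in $G$ --- the computations genuinely reproduce \eqref{e:in-star-FPR}--\eqref{e:seq-FPR} on the fixed-point set, and your argument for \eqref{e:recycle} goes through as written.
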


\begin{proof}

Let $\delta,\gamma,\theta \in \R_{++}$.  From Proposition~\ref{l:known-e}\ref{l:known-e-ii}, given $\bz \in \Fix T_{\theta,\gamma}$, there exists $\Bar{x} \in X$ such that for all $i=1,\dots, n$, $\bx^\gamma_i(\bz) = \Bar{x}$. Then  \begin{equation} \label{e:graph-identity}
    \bx^\gamma_1(\bz) = \bx^\gamma_i(\bz) \text{~and~}  \displaystyle\sum_{j=1}^n (\kappa_j - 2\kappa_j^+) \bx^\gamma_j(\bz) = \displaystyle\sum_{j=1}^n (\kappa_j - 2\kappa_j^+) \Bar{x} = 0,
\end{equation}
where the last equality follows from the handshaking lemma (see, e.g. \cite[Fact 5.1 (i)]{atenas2025relocated}): $\sum_{j=1}^n (\kappa_j - 2\kappa_j^+) = 0$). We now proceed to show that for each graph structure, the given operator is a \Qname{} that satisfies \eqref{e:recycle}.
\begin{enumerate}

    \item[\ref{p:inward-star}] For the inward star-shaped subgraph, the incidence matrix is given by, for all $i=1,\dots,n-1$, $\Inc(\widecheck{G})_{ii} = 1$,  $ \Inc(\widecheck{G})_{ni} = -1 $, and $0$ otherwise. 
    In particular, $\bx^\gamma_1(\bz) = J_{\frac{\gamma}{\kappa_1}A_1}(\frac{1}{\kappa_1}\bz_1)$. Moreover, for all $i = 1, \dots, n-1$, 
    $\Inc(\widecheck{G})^\dagger_{ii} = 1-\frac1n$, and for all $j = 1, \dots, n$, $j \neq i$, $\Inc(\widecheck{G})^\dagger_{ij} = - \frac1n$. Thus, for all $i =1, \dots, n-1$, $\sum_{j=1}^n\Inc(\widecheck{G})^{\dagger}_{ij} = 0$. In view of Remark~\ref{r:cheap-reloc}, then 
    \begin{equation*}
    \begin{aligned}
        \big(\Inc(\widecheck{G})^\dagger\tilde{\be}^\gamma(\bz)\big)_i & = \left( 1 - \frac1n \right)(\kappa_i - 2\kappa_i^+) \bx^\gamma_i(\bz)  - \frac{1}{n}\displaystyle\sum_{\substack{j=1\\ j \neq i}}^n (\kappa_j - 2\kappa_j^+) \bx^\gamma_j(\bz) \\ 
        & = (\kappa_i - 2\kappa_i^+) \bx^\gamma_i(\bz) -\frac{1}{n}\displaystyle\sum_{j=1}^n (\kappa_j - 2\kappa_j^+) \bx^\gamma_j(\bz) . 
    \end{aligned}
    \end{equation*} 
Then, for all $\bz \in \Fix T_{\theta,\gamma}$, \eqref{e:graph-FPR} and \eqref{e:graph-identity} imply, for all $i=1,\dots, n-1$, \begin{equation*}
    \tilde{Q}_{\delta\gets \gamma}^i\bz = \frac{\delta}{\gamma} \bz_i + \left(1 - \frac{\delta}{\gamma}\right)  (\kappa_i - 2\kappa_i^+) \bx^\gamma_1(\bz)  = \widecheck{Q}_{\delta\gets \gamma}^i\bz.
\end{equation*} Hence, $\tilde{Q}_{\delta\gets\gamma} = \widecheck{Q}_{\gamma\gets\delta}$ over $\Fix T_{\theta,\gamma}$. Furthermore, for all $\bz,\bw \in X^{n-1}$ and $\delta,\gamma\in\R_{++}$, since \begin{equation*}
    \|\bx^\gamma_1(\bz) - \bx^\gamma_1(\bw)\| \leq \frac{1}{\kappa_1}\|\bz_1-\bw_1\|,
\end{equation*} then \begin{equation*}
    \|\widecheck{Q}_{\delta\gets\gamma}\bz-\widecheck{Q}_{\delta\gets\gamma}\bw\| \leq \frac{\delta}{\gamma}\|\bz - \bw\| + \left|1 - \frac{\delta}{\gamma}\right| \frac{1}{\kappa_1}\sqrt{\sum_{i=1}^{n-1} (\kappa_i - 2\kappa_i^+)^2}\|\bz_1-\bw_1\|.
\end{equation*} This proves that $\widecheck{Q}_{\delta\gets\gamma}$ is Lipschitz continuous with  constant$$\mathcal{L}_{\delta\gets\gamma} = \max\left\{1,\frac{\delta}{\gamma} + \left|1 - \frac{\delta}{\gamma}\right| \frac{1}{\kappa_1}\sqrt{\kappa_1^2 + \sum_{i=1}^{n-1} (\kappa_i - 2\kappa_i^+)^2}\right\}.$$ Furthermore, for all $\bz \in X^{n-1}$, from \eqref{e:in-star-FPR} and Lemma~\ref{l:known-J}\ref{l:known-J-1}, it holds \begin{equation*}
    \bx_1^\delta(\widecheck{Q}_{\delta\gets \gamma}\bz) = J_{\frac{\delta}{\kappa_1}A_1}\left(\frac{1}{\kappa_1}\widecheck{Q}_{\delta\gets \gamma}^1\bz\right) 
    = J_{\frac{\delta}{\kappa_1}A_1}\left(\frac{\delta}{\gamma}\frac{1}{\kappa_1}\bz_1 + \left( 1 - \frac{\delta}{\gamma} \right) \bx^\gamma_1(\bz)\right) = 
    \bx_1^\gamma(\bz).
\end{equation*}

\item[\ref{p:outward-star}] For the outward star-shaped graph, for all $i=1,\dots, n-1$, $\Inc(\widehat{G})_{1i} = 1$ and $\Inc(\widehat{G})_{(i+1)i} = -1$, and $0$ otherwise. In particular, $\bx^\gamma_1(\bz) = J_{\frac{\gamma}{\kappa_1}A_1}(\frac{1}{\kappa_1}\sum_{i=1}^{n-1}\bz_i)$.      Moreover, for all $i=1,\dots, n-1$, $\Inc(\widehat{G})^\dagger_{i(i+1)} = \frac1n - 1$, and for all $j = 1, \dots, n$, $j \neq i+1$, $\Inc(\widehat{G})^\dagger_{ij} = \frac1n$. 
Thus, for all $i =1, \dots, n-1$, $\sum_{j=1}^n\Inc(\widehat{G})^{\dagger}_{ij} = 0$. In view of Remark~\ref{r:cheap-reloc}, then\begin{equation*}\begin{aligned}
        \big(\Inc(\widehat{G})^\dagger\tilde{\be}^\gamma(\bz)\big)_i & = \frac{1}{n}\displaystyle\sum_{\substack{j=1\\ j \neq i+1}}^n (\kappa_j - 2\kappa_j^+) \bx^\gamma_j(\bz) + \left( \frac1n -1 \right)(\kappa_{i+1} - 2\kappa_{i+1}^+) \bx^\gamma_{i+1}(\bz) \\
        & = \frac{1}{n}\displaystyle\sum_{j=1}^n (\kappa_j - 2\kappa_j^+) \bx^\gamma_j(\bz)  -(\kappa_{i+1} - 2\kappa_{i+1}^+) \bx^\gamma_{i+1}(\bz) 
    \end{aligned}
    \end{equation*}
Then, for all $\bz \in \Fix T_{\theta,\gamma}$, \eqref{e:graph-FPR} and \eqref{e:graph-identity} yield for all $i = 1,\dots, n-1$,  \begin{equation*}
    \tilde{Q}_{\delta\gets \gamma}^i\bz = \frac{\delta}{\gamma} \bz_i -\left(1 - \frac{\delta}{\gamma}\right) (\kappa_{i+1} - 2\kappa_{i+1}^+) \bx^\gamma_1(\bz) = \widehat{Q}_{\gamma\gets\delta}\bz.
\end{equation*} Hence, $\tilde{Q}_{\delta\gets\gamma} = \widehat{Q}_{\gamma\gets\delta}$ over $\Fix T_{\theta,\gamma}$. Moreover, for all $\bz,\bw \in X^{n-1}$ and $\delta,\gamma\in\R_{++}$, since \begin{equation*}
    \|\bx^\gamma_1(\bz) - \bx^\gamma_1(\bw)\| \leq \frac{1}{\kappa_1}\sum_{i=1}^{n-1}\|\bz_i-\bw_i\|,
\end{equation*} then \begin{equation*}
    \|\widehat{Q}_{\delta\gets\gamma}\bz-\widehat{Q}_{\delta\gets\gamma}\bw\| \leq \frac{\delta}{\gamma}\|\bz - \bw\| + \left|1 - \frac{\delta}{\gamma}\right| \frac{1}{\kappa_1}\sqrt{\sum_{i=2}^n (\kappa_i - 2\kappa_i^+)^2}\sum_{i=1}^{n-1}\|\bz_i-\bw_i\|.
\end{equation*} This proves that $\widehat{Q}_{\delta\gets\gamma}$ is Lipschitz continuous with constant $$\mathcal{L}_{\delta\gets\gamma} = \max\left\{1,\frac{\delta}{\gamma} + \left|1 - \frac{\delta}{\gamma}\right| \frac{\sqrt{n-1}}{\kappa_1}\sqrt{\sum_{i=2}^n (\kappa_i - 2\kappa_i^+)^2}\right\}.$$  Furthermore, since $-\kappa_1 = \sum_{i=2}^n(\kappa_i - 2\kappa_i^+)$, then for all $\bz \in X^{n-1}$, 
\eqref{e:out-star-FPR} and Lemma~\ref{l:known-J}\ref{l:known-J-1} yield \begin{equation*}
    \begin{aligned}
        \bx_1^\delta(\widehat{Q}_{\delta\gets \gamma}\bz) & = J_{\frac{\delta}{\kappa_1}A_1}\left(\frac{1}{\kappa_1}\sum_{i=1}^{n-1}\widehat{Q}_{\delta\gets \gamma}^i\bz\right)\\
        & = J_{\frac{\delta}{\kappa_1}A_1}\left(\frac{\delta}{\gamma} \frac{1}{\kappa_1}\sum_{j=1}^{n-1}\bz_i -\frac{1}{\kappa_1}\left(1 - \frac{\delta}{\gamma}\right) \sum_{j=2}^{n}(\kappa_{j} - 2\kappa_{j}^+) \bx^\gamma_1(\bz)\right)\\
        & = J_{\frac{\delta}{\kappa_1}A_1}\left(\frac{\delta}{\gamma} \frac{1}{\kappa_1}\sum_{i=1}^{n-1}\bz_i +\left(1 - \frac{\delta}{\gamma}\right) \bx^\gamma_1(\bz)\right) = \bx^\gamma_1(\bz).\\
    \end{aligned}
\end{equation*}

\item[\ref{p:sequential}] For the sequential graph, the incidence matrix is given by: for all $i = 1, \dots, n-1$, $\Inc(\Bar{G})_{ii} = 1$, $\Inc(\Bar{G})_{(i+1)i} = -1$, and $0$ otherwise.  In particular, $\bx^\gamma_1(\bz) = J_{\frac{\gamma}{\kappa_1}A_1}(\frac{1}{\kappa_1}\bz_1)$.      Moreover, for all $i=1,\dots, n-1$, $\Inc(\Bar{G})^\dagger_{ij} = 1 - \frac{i}{n}$ for $j=1,\dots,i$, and $\Inc(\Bar{G})^\dagger_{ij} = -\frac{i}{n}$ for $j=i+1,\dots, n$. 
    Thus, for all $i =1, \dots, n-1$, $\sum_{j=1}^n\Inc(\Bar{G})^{\dagger}_{ij} = 0$. In view of Remark~\ref{r:cheap-reloc}, then\begin{equation*}\begin{aligned}
        \big(\Inc(\Bar{G})^\dagger\tilde{\be}^\gamma(\bz)\big)_i & = \left(1 - \frac{i}{n}\right)\displaystyle\sum_{j=1}^i (\kappa_j - 2\kappa_j^+) \bx^\gamma_j(\bz) - \frac{i}{n}\sum_{j=i+1}^n (\kappa_j - 2\kappa_j^+) \bx^\gamma_j(\bz) \\
        & = \sum_{j=1}^i (\kappa_j - 2\kappa_j^+) \bx^\gamma_j(\bz) - \frac{i}{n}\sum_{j=1}^n (\kappa_j - 2\kappa_j^+) \bx^\gamma_j(\bz) 
    \end{aligned}
    \end{equation*} Then, for all $\bz \in \Fix T_{\theta,\gamma}$, \eqref{e:graph-FPR} and \eqref{e:graph-identity} yield 
   for all $i = 1,\dots, n-1$,  \begin{equation*}
    \begin{aligned}
        \tilde{Q}_{\delta\gets \gamma}^i\bz &= \frac{\delta}{\gamma} \bz_i +\left(1 - \frac{\delta}{\gamma}\right) \sum_{j=1}^i (\kappa_j - 2\kappa_j^+) \bx^\gamma_1(\bz) = \Bar{Q}_{\delta\gets\gamma}\bz.
    \end{aligned}
\end{equation*} Hence, $\tilde{Q}_{\delta\gets\gamma} = \Bar{Q}_{\gamma\gets\delta}$ over $\Fix T_{\theta,\gamma}$.   Moreover, since for all $\bz,\bw \in X^{n-1}$ and $\delta,\gamma\in\R_{++}$, \begin{equation*}
    \|x^\gamma_1(\bz) - x^\gamma_1(\bw)\| \leq \frac{1}{\kappa_1}\|\bz_1-\bw_1\|,
\end{equation*} then \begin{equation*}
    \|\Bar{Q}_{\delta\gets\gamma}\bz-\Bar{Q}_{\delta\gets\gamma}\bw\| \leq \frac{\delta}{\gamma}\|\bz - \bw\| + \left|1 - \frac{\delta}{\gamma}\right| \frac{1}{\kappa_1}\sqrt{\sum_{i=1}^{n-1}\left(\sum_{j=1}^i \kappa_j - 2\kappa_j^+\right)^2}\|\bz_1-\bw_1\|.
\end{equation*} This proves that $\Bar{Q}_{\delta\gets\gamma}$ is Lipschitz continuous with  constant $$\mathcal{L}_{\delta\gets\gamma} = \max\left\{1,\frac{\delta}{\gamma} + \left|1 - \frac{\delta}{\gamma}\right|\frac{1}{\kappa_1}\sqrt{\sum_{i=1}^{n-1}\left(\sum_{j=1}^i \kappa_j - 2\kappa_j^+\right)^2}\right\}.$$ 
Furthermore, for all $\bz \in X^{n-1}$, from \eqref{e:seq-FPR} and Lemma~\ref{l:known-J}\ref{l:known-J-1}, it holds \begin{equation*}
    \bx^\delta_1(\Bar{Q}_{\delta\gets \gamma}\bz) = J_{\frac{\delta}{\kappa_1}A_1}\left(\frac{1}{\kappa_1}\Bar{Q}_{\delta\gets \gamma}^1\bz\right) 
    = J_{\frac{\delta}{\kappa_1}A_1}\left(\frac{\delta}{\gamma}\frac{1}{\kappa_1}\bz_1 + \left( 1 - \frac{\delta}{\gamma} \right) \bx^\gamma_1(\bz)\right) = 
    \bx_1^\gamma(\bz).
\end{equation*}

\end{enumerate} 

In view of Remark~\ref{r:FPR} and Proposition~\ref{p:graph-FPR}, then $\widecheck{Q}_{\delta\gets\gamma}$, $\widehat{Q}_{\delta\gets\gamma}$, and $\Bar{Q}_{\delta\gets\gamma}$  are  \Qname{s} of their corresponding iteration operators.

\end{proof}

\begin{remark}[Convergence of relocated graph-based forward-backward algorithms]

In view of Proposition~\ref{p:graph-FPR} and~\ref{p:graph-FPR-cheap}, convergence of relocated fixed-point iterations for operators in the form \eqref{e:graph-FB-operator} follows directly from Theorem~\ref{t:dist-FB-convergence}, and using a similar argument as in Remark~\ref{r:Lip-const} to show that the condition $\sum_\kkk (\Lip_{\gamma_{k+1}\gets\gamma_k} -1) < + \infty$ holds. The advantage of the structure-dependent fixed-point relocators in Proposition~\ref{p:graph-FPR-cheap} is that no extra resolvent evaluations are needed in comparison to the non-relocated version of the method, thus preventing an increase in the computational cost per (relocated) iteration. More precisely,  the iterates in \eqref{e:var-FB} take the form \begin{equation} \label{e:var-FB}
        \left\{\begin{aligned}
        \bx_k & = \bx^{\gamma_k}(\bz_k) \text{~from \eqref{e:FB-resolvent}}\\
            \bw_k& = \bz_k - \lambda_k\theta_k \Inc(G^\prime)^*\bx_k\\
            \bz_{k+1}& = Q^\prime_{\gamma_{k+1}\gets\gamma_k}\bw_k,
        \end{aligned}\right.
    \end{equation} where $Q^\prime_{\gamma_{k+1}\gets\gamma_k}$ is a \Qname{} dependent on the graph structure of $G^\prime$ in Proposition~\ref{p:graph-FPR-cheap}. For $Q^\prime \in \{\widecheck{Q}, \widehat{Q}, \Bar{Q}\}$, \eqref{e:recycle} implies \[\bx_{1,k+1} = \bx_1^{\gamma_{k+1}}(\bz_{k+1})  = \bx_1^{\gamma_{k+1}}(Q^\prime_{\gamma_{k+1}\gets\gamma_k}\bw_k) = \bx_1^{\gamma_k}(\bw_k).\]  Hence, for all $\kkk$,
    
\begin{enumerate}
    \item For the inward star-shaped subgraph, $\bx_{1,k+1} = J_{\frac{\gamma_k}{\kappa_1}A_1}(\frac{1}{\kappa_1}\bw_{1,k})$,  and from \eqref{e:in-star-FPR}, \[\begin{aligned}
        \bz_{k+1} &= \frac{\gamma_{k+1}}{\gamma_k}\bw_k + \left(1 - \frac{\gamma_{k+1}}{\gamma_k}\right) {\diag(\kappa_i - 2\kappa_i^+)_{i=1}^{n-1}} \otimes \bx^{\gamma_k}_1(\bw_k)\\& = \frac{\gamma_{k+1}}{\gamma_k}\begin{pmatrix}
            \bw_{1,k} \\ \bw_{2,k} \\ \vdots\\ \bw_{n-1,k}
        \end{pmatrix} + \left(1 - \frac{\gamma_{k+1}}{\gamma_k}\right) \begin{pmatrix}
            \kappa_1 \bx_{1,k+1} \\
            (\kappa_2 - 2\kappa_2^+)\bx_{1,k+1}\\
            \vdots \\ (\kappa_{n-1} - 2\kappa_{n-1}^+)\bx_{1,k+1}
        \end{pmatrix}.
    \end{aligned} \]
    \item For the outward star-shaped subgraph, $\bx_{1,k+1} = J_{\frac{\gamma_k}{\kappa_1}A_1}(\frac{1}{\kappa_1}\sum_{i=1}^{n-1}\bw_{i,k})$,  and from \eqref{e:out-star-FPR}, \[\begin{aligned}
        \bz_{k+1} &= \frac{\gamma_{k+1}}{\gamma_k}\bw_k - \left(1 - \frac{\gamma_{k+1}}{\gamma_k}\right) {\diag(\kappa_i - 2\kappa_i^+)_{2=1}^{n}} \otimes \bx^{\gamma_k}_1(\bw_k)\\& = \frac{\gamma_{k+1}}{\gamma_k}\begin{pmatrix}
            \bw_{1,k} \\ \bw_{2,k} \\ \vdots\\ \bw_{n-1,k}
        \end{pmatrix} - \left(1 - \frac{\gamma_{k+1}}{\gamma_k}\right) \begin{pmatrix}
            (\kappa_2 - 2\kappa_2^+) \bx_{1,k+1} \\
            (\kappa_3 - 2\kappa_3^+)\bx_{1,k+1}\\
            \vdots \\ (\kappa_n - 2\kappa_n^+)\bx_{1,k+1}
        \end{pmatrix}.
    \end{aligned} \]
    \item For the sequential subgraph, $\bx_{1,k+1} = J_{\frac{\gamma_k}{\kappa_1}A_1}(\frac{1}{\kappa_1}\bw_{1,k})$,  and from \eqref{e:seq-FPR}, we get \begin{equation*} 
\Bar{Q}_{\delta\gets\gamma}^i\bz := \begin{cases}
\frac{\delta}{\gamma}\bz_1 + \left(1 - \frac{\delta}{\gamma}\right) \kappa_1 \bx^\gamma_1(\bz) \\
\Bar{Q}_{\delta\gets \gamma}^{i-1}\bz +\frac{\delta}{\gamma} (\bz_i - \bz_{i-1}) +  \left(1 - \frac{\delta}{\gamma}\right) (\kappa_i - 2\kappa_i^+) \bx^\gamma_1(\bz), \text{~for~} i = 2, \dots, n-1,
\end{cases}
\end{equation*} and thus \[\begin{aligned}
        \bz_{1,k+1} &= \frac{\gamma_{k+1}}{\gamma_k}\bw_{1,k} + \left(1 - \frac{\gamma_{k+1}}{\gamma_k}\right) \kappa_1 \bx_{1,k+1},
    \end{aligned} \] and for $i=2,\dots,n-1$, \[\bz_{i,k+1} = \bz_{i-1,k+1} + \frac{\gamma_{k+1}}{\gamma_k} (\bw_{i,k} - \bw_{i-1,k}) + \left(1 - \frac{\gamma_{k+1}}{\gamma_k}\right)(\kappa_i - 2\kappa_i^+) \bx_{1,k+1}. \] 
\end{enumerate}

In each case, since $\bx_{1,k+1}$ is computed in iteration $k$, then this iterate can be recycled in iteration $k+1$ and only compute $\bx_{2,k+1}, \dots, \bx_{n,k+1}$ in the beginning of the iteration $k+1$.
    
\end{remark}


\subsection{A variable stepsize forward-backward three-operator splitting method} \label{s:FPR-DY}

The Davis--Yin three-operator splitting method \cite{davis2017three} is a method of the forward-backward family aiming to solve the following problem: \begin{equation*}
    \text{find~} x \in X \text{~such that~} 0 \in A_1x + A_2x + Bx,
\end{equation*} where $A_1$ and $A_2$ are maximally monotone operators on $X$, and $B$ is a $\beta$-cocoercive operator. To recover the Davis--Yin iteration operator from either the inward star-shaped or sequential graphs in Proposition~\ref{p:graph-FPR-cheap}, set $n=2$, $\mN = \{1,2\}$ and $\mE = \mE^\prime = \{(1,2)\}$, so that $\kappa_1=\kappa_2=\kappa_2^+ = 1$ and $\kappa_1^+ = 0$. Then, the Davis--Yin iteration operator is defined by: for $\theta,\gamma \in \R_{++}$ and $z \in X$,\begin{equation*}
    T^{DY}_{\theta,\gamma}z := z - \theta  ( x_1^\gamma(z) - x_2^\gamma(z)), \text{~where~}
    \left\{\begin{aligned}
        x_1^\gamma(z) &:= J_{\gamma A_1}(2z)\\
        x_2^\gamma(z) &:= J_{2\gamma A_2}\big(  -z +  2 x_1^\gamma(z) - \gamma B_{1}x_{1}^\gamma(z)\big).
    \end{aligned}\right.
\end{equation*} In view of Proposition~\ref{p:graph-FPR-cheap}, the 
a \Qname{} for $T^{DY}_{\theta,\gamma}$ is given by:\begin{equation} \label{e:FPR-DY}
    \Bar{Q}_{\delta\gets\gamma} = \frac{\delta}{\gamma}\Id + \left(1 - \frac{\delta}{\gamma}\right)x_1^\gamma.
\end{equation} Observe that $\Bar{Q}_{\delta\gets\gamma}$ coincides with the \Qname{} of the Douglas--Rachford method from \cite[Lemma 4.7]{atenas2025relocated}. The resulting variable stepsize version of the three-operator Davis--Yin splitting method is presented in Algorithm~\ref{a:DY} (cf. \cite[Algorithm 1]{atenas2025relocated}).

\begin{algorithm}[!ht]
\caption{Relocated Davis--Yin algorithm for finding a zero of $A_1+A_2+B$. \label{a:DY}}
\SetKwInOut{Input}{Input}
\Input{Choose $z_0 \in X$, and an initial stepsize $\gamma_0 \in \R_{++}$. }

Set $x_{0} = J_{\gamma_0 A_1}z_0$. 

\For{$k=0,1,2,\dots$}{
Step 1. Intermediate step. Choose $\lambda_k, \theta_k \in \R_{++}$, and compute \begin{equation} \label{DY:variable-stepsize-1}
    \left\{\begin{aligned}
    y_{k} & = J_{\gamma_k A_2}(2x_{k} - z_k - \gamma_k Bx_{k}) \\
    w_k & = z_k + \lambda_k\theta_k(y_{k} - x_{k}). 
    \end{aligned}\right.
\end{equation}

Step 2. Next iterate. Compute
\begin{equation} \label{DY:variable-stepsize-2}
    \left\{\begin{aligned}
    &x_{k+1}  = J_{\gamma_k A_1}w_k \\
    &\text{the next stepsize $\gamma_{k+1} \in \R_{++}$}\\
    &z_{k+1}  = \dfrac{\gamma_{k+1}}{\gamma_k} w_k + \left( 1 - \dfrac{\gamma_{k+1}}{\gamma_k} \right) x_{k+1}. 
    \end{aligned}\right.
\end{equation}

}
\end{algorithm}



\begin{corollary}[Convergence of relocated Davis--Yin method]
    Let $A_1, A_2$ be two maximally monotone operators on $X$, and $B$ be a $\beta$-cocoercive operator on $X$, such that $\zer(A_1+A_2+B) \neq \varnothing$. Let $\Gamma \subseteq (0, \frac{2}{\beta})$, $(\gamma_k)_\kkk $ be a sequence in $\Gamma$, and $\Bar{\gamma} \in \Gamma$  satisfying \begin{equation} \label{a:gamma}
        \gamma_k \to \overline{\gamma} \text{~and~}
        \sum_\kkk (\gamma_{k+1}-\gamma_k)_+ < +\infty .
    \end{equation} Let $(\theta_k)_\kkk $ be a bounded sequence in $\R_{++}$ separated from zero, and $(\lambda_k)_\kkk$ be a sequence in $\R_{++}$ separated from zero, such that $\liminf_\kkk(2 - \beta\gamma_k - 2 \lambda_k \theta_k) >0$. For the sequences $(x_k)_\kkk$, $(y_k)_\kkk$, $(z_k)_\kkk$ and $(w_k)_\kkk$ generated by Algorithm~\ref{a:DY}, the following hold.
\begin{enumerate}
    \item $(z_k)_\kkk$ and $(w_k)_\kkk$ converge to the same point $\overline{z} \in \Fix T^{DY}_{\liminf_\kkk \theta_k,\overline{\gamma}}$.
    \item $(x_k)_\kkk$ and $(y_k)_\kkk$ converge to the same point $\bar{x} = J_{\overline{\gamma}A_1}(\Bar{z}) \in \zer(A_1+A_2 +B)$.
\end{enumerate}
\end{corollary}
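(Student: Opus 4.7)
The strategy is to recognise Algorithm~\ref{a:DY} as an instance of the graph-based forward-backward scheme from Section~\ref{s:graph-FB} with $n=2$, $\mN=\{1,2\}$ and $\mE=\mE^\prime=\{(1,2)\}$, and then invoke Theorem~\ref{t:dist-FB-convergence}. First I would specialise the matrices as in \cite[Example~4.1]{dao2025general}: $D=\tfrac{1}{2}I_2$, $M=(1,-1)^*$, $N$ with $N_{21}=1$ and all other entries zero, $P=(0,1)^*$, and $R=(1,0)$. Assumption~\ref{a:stand} is then immediate. A short computation yields $(M^*)^\dagger=\tfrac{1}{2}(1,-1)^*$ and hence $(P^*-R)(M^*)^\dagger=-1$, so the constant $\mu=\beta\|(P^*-R)(M^*)^\dagger\|^2$ of Lemma~\ref{l:con-ave} equals $\beta$. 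In particular, the stepsize window $\Gamma\subseteq(0,2/\beta)$ in the corollary is precisely the admissible interval $(0,2\mu^{-1})$ of Theorem~\ref{t:dist-FB-convergence}.

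Next I would translate the conical averagedness condition. By Lemma~\ref{l:con-ave}, $\eta(\theta_k,\gamma_k)=2\theta_k/(2-\beta\gamma_k)$, from which a direct computation gives
\begin{equation*}
\eta_k^{-1}\lambda_k(1-\eta_k\lambda_k)=\frac{\lambda_k\,(2-\beta\gamma_k-2\lambda_k\theta_k)}{2\theta_k}.
\end{equation*}
Since $(\theta_k)_\kkk$ and $(\lambda_k)_\kkk$ are bounded and separated from zero and $(\gamma_k)_\kkk$ is bounded, the corollary's hypothesis $\liminf_\kkk(2-\beta\gamma_k-2\lambda_k\theta_k)>0$ is equivalent to $\liminf_\kkk \eta_k^{-1}\lambda_k(1-\eta_k\lambda_k)>0$ required by Theorem~\ref{t:dist-FB-convergence}; the summability $\sum_\kkk(\gamma_{k+1}-\gamma_k)_+<+\infty$ transfers verbatim.

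Finally I would match the iterates. With $\bx_k=(x_k,y_k)$ one has $\bM^*\bx_k=x_k-y_k$, so the update $w_k=z_k+\lambda_k\theta_k(y_k-x_k)=z_k-\lambda_k\theta_k\bM^*\bx_k$ reproduces the middle line of~\eqref{e:var-FB}. For $n=2$, both the inward star-shaped and sequential graphs in Proposition~\ref{p:graph-FPR-cheap} collapse to the single edge $\mE^\prime=\{(1,2)\}$, and the associated \Qname{s} reduce to $\Bar{Q}_{\delta\gets\gamma}$ given in~\eqref{e:FPR-DY}. Since $x_{k+1}=J_{\gamma_k A_1}w_k=x_1^{\gamma_k}(w_k)$, the definition $z_{k+1}=(\gamma_{k+1}/\gamma_k)w_k+(1-\gamma_{k+1}/\gamma_k)x_{k+1}$ is exactly $\Bar{Q}_{\gamma_{k+1}\gets\gamma_k}w_k$. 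Theorem~\ref{t:dist-FB-convergence} then delivers weak convergence of $(z_k)_\kkk$ and $(w_k)_\kkk$ to a common $\overline{z}\in\Fix T^{DY}_{\liminf_\kkk\theta_k,\overline{\gamma}}$, while the shadow sequence $(\bx_k)_\kkk=(x_k,y_k)_\kkk$ converges weakly to $(\overline{x},\overline{x})$ with $\overline{x}=J_{\overline{\gamma}A_1}\overline{z}\in\zer(A_1+A_2+B)$ by~\eqref{e:x-zero}, yielding both claims. The only obstacle will be bookkeeping the change of variables from Section~\ref{s:graph-FB} aligning the graph framework with the Davis--Yin convention, but with $\kappa_1=\kappa_2=1$ this is transparent.
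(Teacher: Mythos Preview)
Your proposal is correct and follows essentially the same approach as the paper: identify Algorithm~\ref{a:DY} as an instance of the general scheme~\eqref{e:var-FB} with $M=(1,-1)^*$ and the \Qname{} $\Bar{Q}_{\delta\gets\gamma}$ from~\eqref{e:FPR-DY}, verify that $\mu=\beta$, and invoke Theorem~\ref{t:dist-FB-convergence}. Your only extra work is the explicit conversion of the parameter condition into the $\eta_k$ form of Theorem~\ref{t:FPR-convergence}, which is unnecessary since Theorem~\ref{t:dist-FB-convergence} already states the hypothesis directly as $\liminf_\kkk(2-\gamma_k\mu-2\lambda_k\theta_k)>0$; with $\mu=\beta$ this matches the corollary verbatim.
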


\begin{proof}
Let $(x_k)_\kkk$, $(y_k)_\kkk$, $(w_k)_\kkk$, $(z_k)_\kkk$ be the sequences generated by Algorithm~\ref{a:DY}. From \eqref{DY:variable-stepsize-2}, for all $\kkk$,  $z_{k+1} = \Bar{Q}_{\gamma_{k+1}\gets\gamma_k} w_k$, $w_k = z_k - \lambda_k\theta_k \begin{bmatrix}
    1 \\ -1
\end{bmatrix}[x_k \quad y_k]$, where $x_k = \bx^{\gamma_k}_1(z_k)$ and $y_k = \bx^{\gamma_k}_2(z_k)$. Hence, the sequences conform to \eqref{e:var-FB} with $M = \begin{bmatrix}
    1 \\ -1
\end{bmatrix} $. Moreover, the operator in \eqref{e:FPR-DY} is Lipschitz continuous with constant $\Lip_{\delta\gets\gamma} = \frac{\delta}{\gamma} + \left| 1- \frac{\delta}{\gamma} \right|$.  
The result follows Theorem~\ref{t:dist-FB-convergence}, by noting that, from Lemma~\ref{l:con-ave},  $\mu = \beta$ in this setting.
\end{proof}

\begin{remark}[On the selection of variable stepsizes] \label{r:stepsize} One of the conditions of the general convergence result in Theorem~\ref{t:dist-FB-convergence} is that \eqref{a:gamma} holds. In view of \cite[Lemma 4.1]{Lorenz-Tran-Dinh}, the following safeguard scheme constructs a sequence of stepsizes satisfying this condition. Given bounds $0<\gamma_{\min}\leq\gamma_{\max} < +\infty$, such that for some $\nu \in (0,1)$, $\gamma_{\max} = \nu \sup{\Gamma}$, and for a sequence $(\zeta_k)_{k \in \N} \subseteq (0,1]$, such that $\zeta_0 = 1$ and $\sum_{k \in \N} \zeta_k < +\infty$, before evaluating $z_{k+1}$ in \eqref{DY:variable-stepsize-2} at the end of iteration $k$ in Algorithm~\ref{a:DY}, for some $\tau_k \in [\gamma_{\min},\gamma_{\max}]$,  define \begin{equation*}
    \gamma_{k+1} = (1-\zeta_k)\gamma_k + \zeta_k \tau_k.
\end{equation*} Observe that this rule guarantees $(\gamma_k)_{k \in \N} \subseteq [\gamma_{\min},\gamma_{\max}]$. In the next section, we show some preliminary numerical experiments using the aforementioned rule. 


\end{remark}

\section{Numerical experiments} \label{s:numerical}

In this section, we apply the algorithms in Section~\ref{s:graph} using different heuristics to define the sequence of stepsizes $(\gamma_k)_\kkk$ to solve statistical variable selection problems. We first examine the three-operator case (Section~\ref{s:FPR-DY}), and then the five-operator case (Section~\ref{s:graph-FB}).

\subsection{Constrained LASSO}

Given a matrix $A \in \R^{q \times d}$ and a vector $b \in \R^q$, the LASSO \cite{tibshirani1996regression} problem seeks to select variables from a vector $x \in \R^d$ such that $Ax \approx b$. In order to choose the variables that explain the reconstruction of the vector $b$, the LASSO uses the $\ell_1$-norm to promote sparsity (variable selection) in the solution. In this section, our goal is to reconstruct a vector $x \in \R^d$ in a predetermined box, such that $Ax = b$ in the least squares sense. We model this problem as the constrained LASSO problem: \begin{equation} \label{eq:ct-LASSO}
    \min_{x\in\R^d} \phi(x) := \|Ax-b\|^2_2 + \lambda\|x\|_1 \text{~s.t.~} x \in [-u,u]^d,
\end{equation} where $\lambda >0$ is a regularisation parameter, and $u >0$ defines the bounds of the box constraints. Since this problem shows a three-operator structure, we can employ Algorithm~\ref{a:DY} to find a solution. Through this simple example, we will illustrate that larger constant stepsizes do not necessarily yield better numerical performance of Algorithm~\ref{a:DY}, justifying the need for splitting methods that can vary stepsizes throughout iterations. We set $A_1 = \lambda \partial \|\cdot\|_1$, $A_2 = N_{[-u,u]^2}$, and $B_1 = A^\top(A\cdot- b)$. Clearly, $B_1$ is Lipschitz continuous with constant $L = \lambda_{\max}(A^\top A)$, where $\lambda_{\max}(\cdot)$ is the maximum eigenvalue function. As $B_1$ is the gradient of a convex function, it is  $L$-cocoercive. We generate $A$ and $b$ randomly, and control the eigenvalues of $A$ to prevent the Lipschitz constant $L$ from exploding or become too small. We also choose the small regularisation parameter $\lambda = 10^{-3}$ and the box bound $ u = 50$.

Following Remark~\ref{r:stepsize}, we iteratively construct the sequence of stepsizes by setting $\zeta_k = \frac{0.1}{(k+1)^{1.5}}$ for all $\kkk$, and pick $\tau_k = \proj_{[\gamma_{\min},\gamma_{\max}]}(t_k)$, where $t_k>0$ can take three different forms: \begin{equation} \label{eq:nonstat}
    t_k = \frac{\|x_{k+1}\|}{\|x_{k+1}-w_k\|} \text{~inspired by \cite[Remark~4.12]{atenas2025relocated}},
\end{equation} \begin{equation}\label{eq:accDY}
    t_k = \frac{-2\gamma_k^2\cdot \frac{2-1.99}{2L} +\sqrt{\gamma_k^4\cdot \frac{(2-1.99)^2}{L^2}+4\gamma_k^2}}{2} \text{~inspired by \cite[eq. (3.6)]{davis2017three}},
\end{equation} or \begin{equation} \label{eq:Halpern}
    t_k = \frac{1}{k+1} \text{~inspired by \cite{Wittmann1992Approximation}}.
\end{equation}  Figure~\ref{fig:LASSO} shows the results obtained by applying Algorithm~\ref{a:DY} to solve problem \eqref{eq:ct-LASSO}, where the initial point is drawn randomly, $\gamma_0 = \frac{1}{\beta}$. We test the three fixed-point relocator (FPR) stepsize regimes listed above, as well as three constant stepsize regimes: $\gamma = \frac{0.1}{L}$, $\gamma = \frac{1}{L}$ and $\gamma = \frac{1.99}{L}$. We report the relative error throughout the iterations of the sequences $(x_k)_\kkk$, $(y_k)_\kkk$, and their associated function values.

\begin{figure*}[h!]
        \centering
        \begin{subfigure}[b]{0.495\textwidth}
            \centering
            \includegraphics[width=\textwidth]{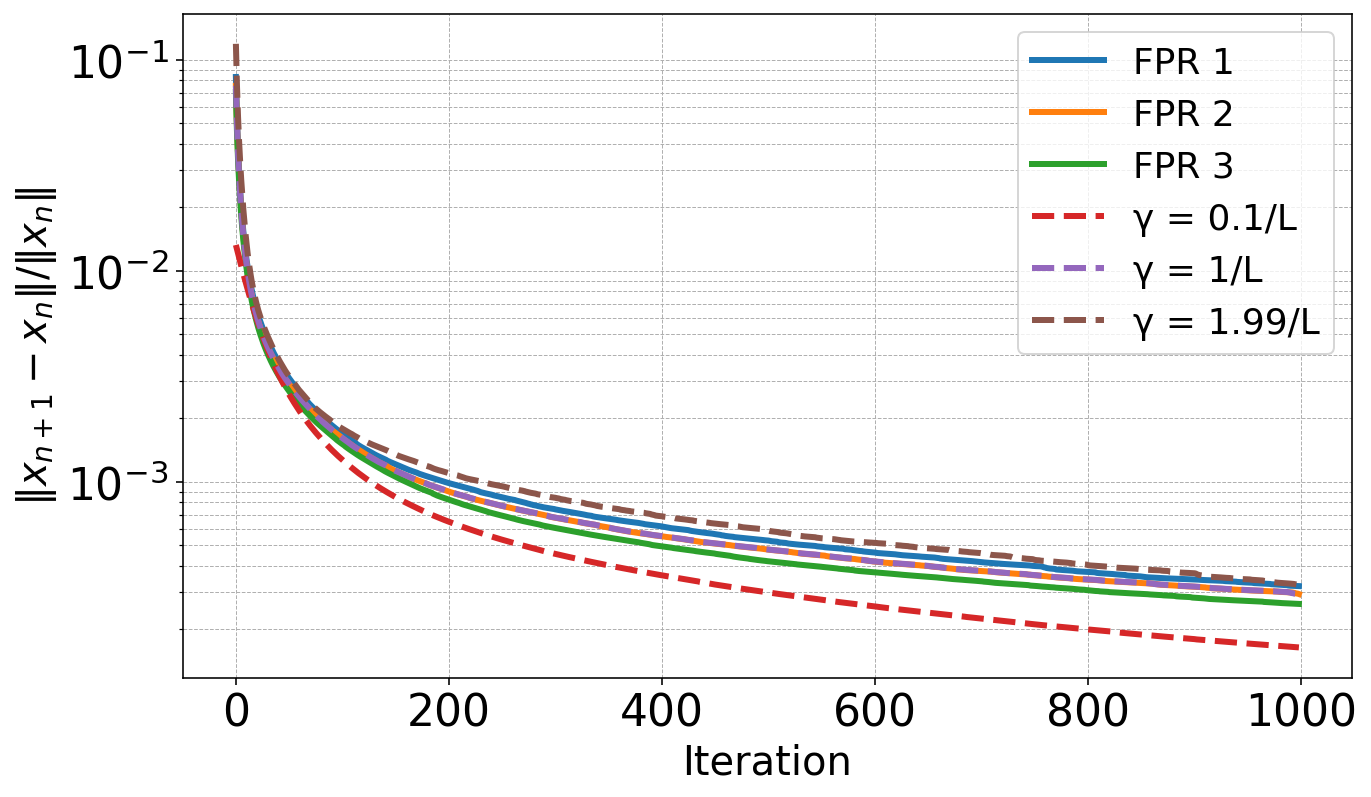}
            \caption[]%
            {Relative error of $(x_k)_\kkk$.}    
            \label{fig:LASSO-dx-1}
        \end{subfigure}
        \hfill
        \begin{subfigure}[b]{0.495\textwidth}  
            \centering 
            \includegraphics[width=\textwidth]{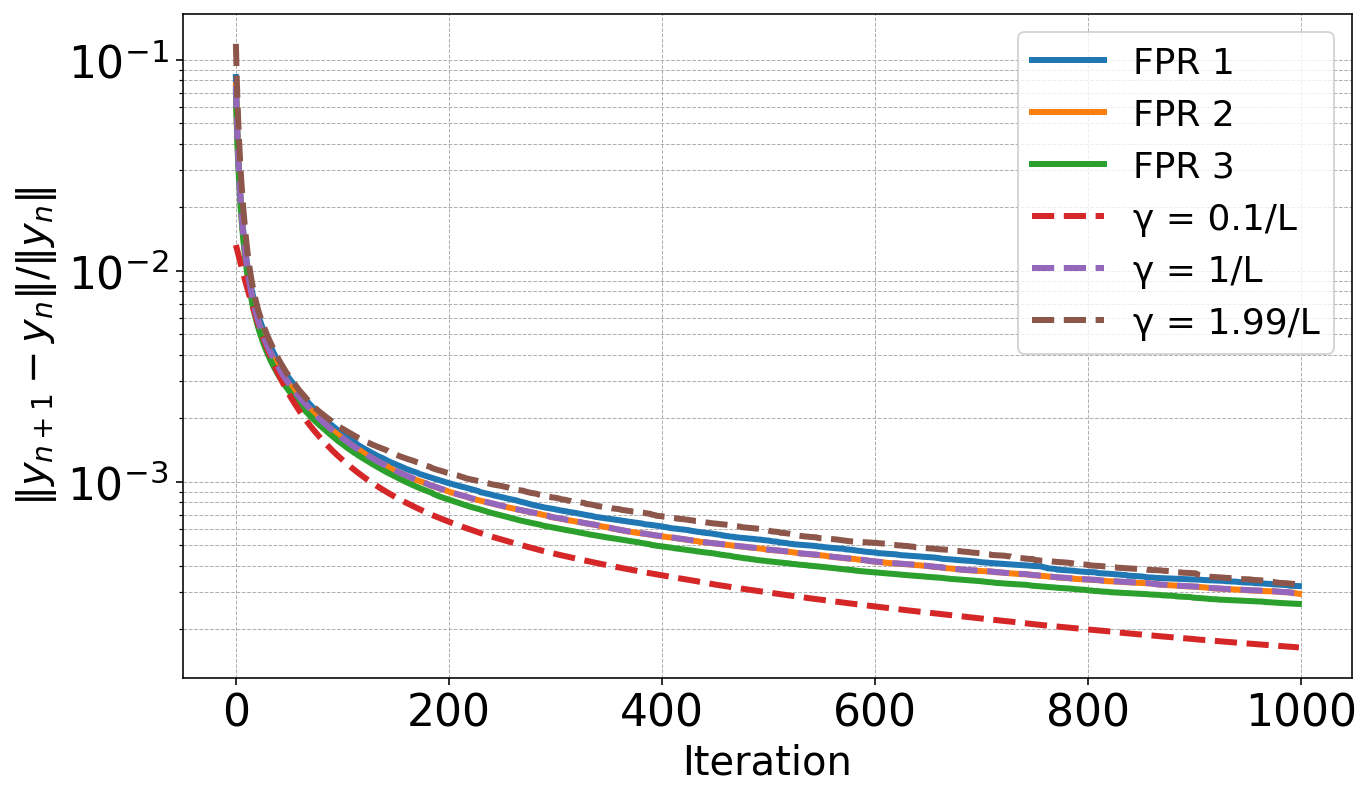}
            \caption[]%
            {Relative error of $(y_k)_\kkk$.}    
            \label{fig:LASSO-dx-2}
        \end{subfigure}
        \vskip\baselineskip
        \begin{subfigure}[b]{0.495\textwidth}   
            \centering 
            \includegraphics[width=\textwidth]{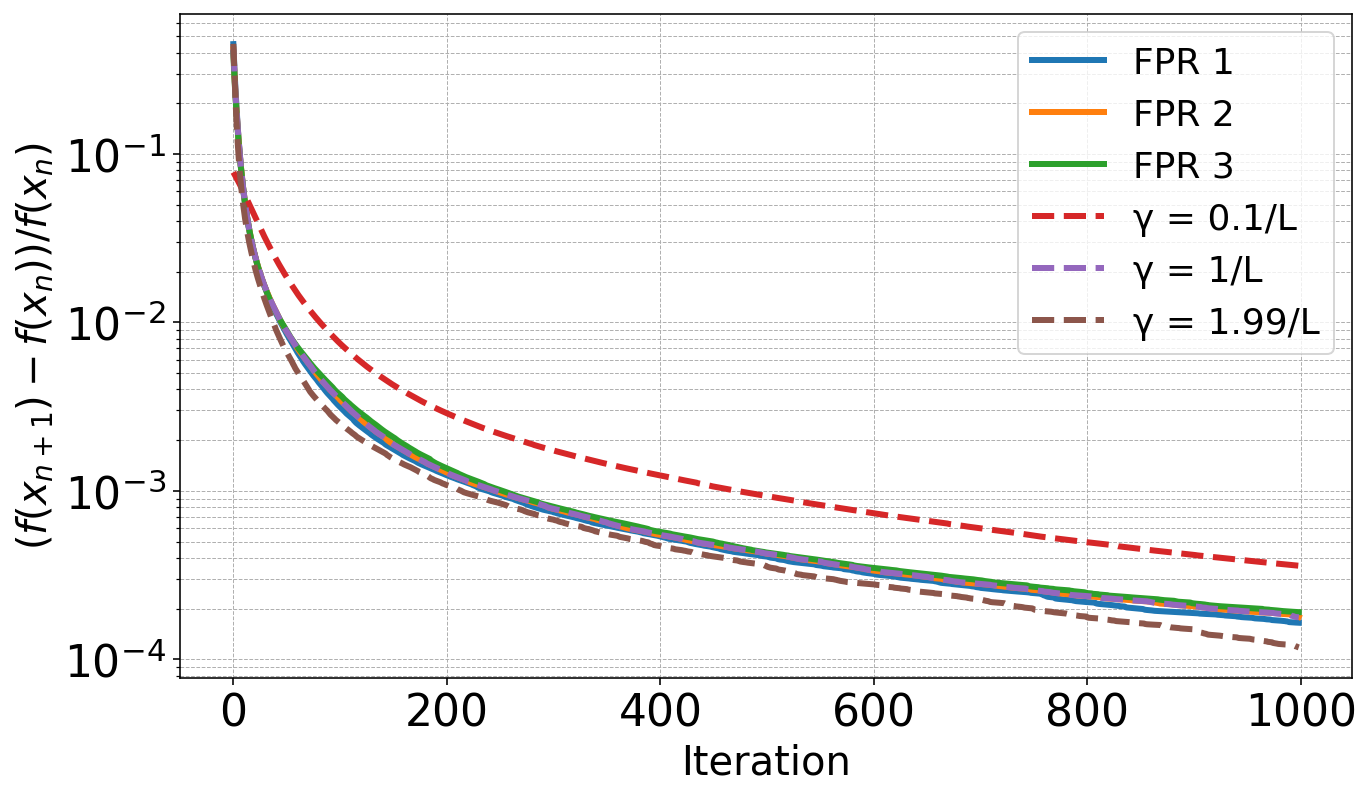}
            \caption[]%
            {Relative error of $(\phi(x_k))_\kkk$.}    
            \label{fig:LASSO-dfx-1}
        \end{subfigure}
        \hfill
        \begin{subfigure}[b]{0.495\textwidth}   
            \centering 
            \includegraphics[width=\textwidth]{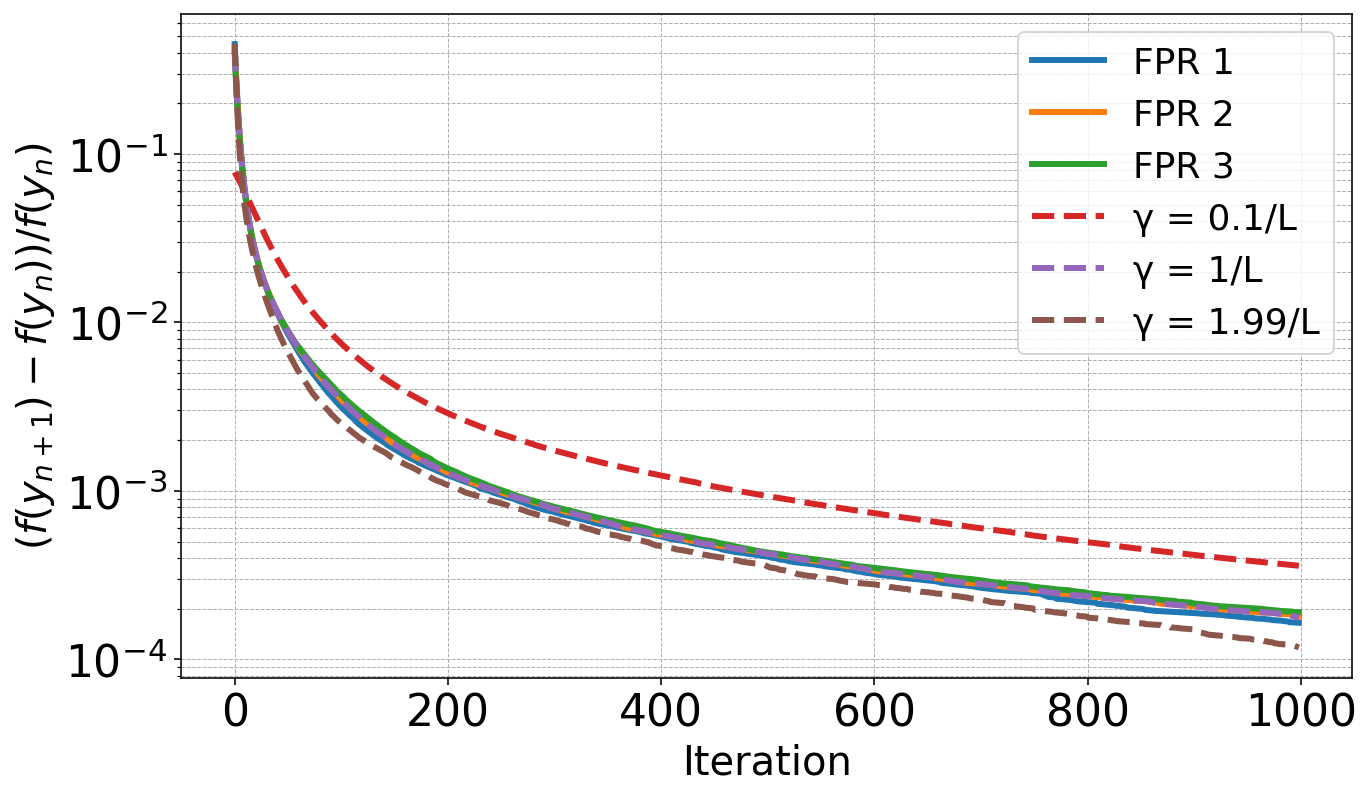}
            \caption[]%
            {Relative error of $(\phi(y_k))_\kkk$.}    
            \label{fig:LASSO-dfx-2}
        \end{subfigure}
        \caption[]
        {Relative error of iterates and function values generated by Algorithm~\ref{a:DY} for different stepsize regimes to solve problem \eqref{eq:ct-LASSO}, where FPR 1 is the stepsize rule \eqref{eq:nonstat}, FPR 2 is \eqref{eq:accDY} and FPR 3 is \eqref{eq:Halpern}. } 
        \label{fig:LASSO}
    \end{figure*}

Figure~\ref{fig:LASSO} shows that better convergence results for the iterate relative error can be obtained for smaller values of constant $\gamma$, while better outcomes can be observed for larger values of $\gamma$ for the function values relative error. The FPR stepsize heuristics provide results in between the two extremes. These results align with the experiments performed in \cite{pedregosa2018adaptive} (e.g. Figures 1 and 5) for small regularisation parameters, where a larger stepsize does not consistently show a better performance for the objective function values. Here we only report the results for a small value of $\lambda$, as for larger values what is observed is that a better numerical performance can be obtained with larger stepsizes.

\subsection{Nonnegative elastic net}

The elastic net model \cite{zou2005regularization} is a generalisation of the LASSO for sparse linear regression in the presence of highly correlated features. In this section, we consider the elastic net problem with nonnegative variables. Given $A \in \R^{q\times d}$,  and $b \in \R^q$, the nonnegative elastic net problem is the following constrained and penalised least squares optimisation problem: \begin{equation} \label{e:elastic-net-obj} \min_{x \in \R^d} \phi(x) := \frac{1}{2}\|Ax-b\|_2^2 +  \lambda_1 \|x\|_1 + \frac{\lambda_2}{2}\|x\|_2^2 \text{~s.t.~} x \in [0, +\infty)^d,\end{equation} where $\lambda_1,\lambda_2 >0$ are regularisation parameters. The $\ell_2$-norm squared induces shrinkage and handles variable correlation. The combination of the two regularisation terms prevents overfitting to improve performance and reliability:  highly correlated predictors have similar coefficients. The nonnegative constraint in this case is necessary in applications where it is known a priori that the variables cannot be negative, e.g. in the air quality regression problem \cite{air_quality_360}, where the variables represent concentration of chemicals.



Adding a constraint in the elastic net problem in \eqref{e:elastic-net-obj} naturally yields a multioperator optimisation problem that can be handled by the forward-backward splitting methods in Section~\ref{s:graph-FB}. We choose $n=3$ and $p = 2$,  $A_1 = N_{x \geq 0}$, $A_2 = A_3 = \frac{\lambda_1}{2}  \partial \|\cdot\|_1$,  $B_1 = A^\top(A\cdot -b)$ and $B_2 = \lambda_2 I$. Since $B_1$ and $B_2$ are Lipschitz continuous and subdifferentials of a convex function, then both are $\beta$-cocoercive with $\beta = \max\{\lambda_{\max}(A^\top A),\lambda_2\}$. We generate synthetic data to emulate the air quality dataset from \cite{air_quality_360}. In particular, we generate the matrix $A$ from an exponential distribution, and then impose linear dependency between some columns (to induce correlation between features). Furthermore, we also draw a vector $x^*$ from an exponential distribution and then set $b $ to be $Ax^*$ plus some random normal noise. In addition, we choose $\lambda_1 = \lambda_2 = 10^{-2}$ as regularisation parameters. Similarly to the previous section, we do not report the results for larger values of the regularisation parameters, as in those cases, better numerical performance is observed for larger stepsizes.

As in the previous section, we follow Remark~\ref{r:stepsize} to define the stepsizes for the relocated fixed-point iterations setting. In particular, we tested \eqref{eq:nonstat} and \eqref{eq:Halpern}, but decided to only display the results of the former, as there was no significant difference between the results using either of the options (observe that \eqref{eq:accDY} does not apply in a setting with more than three operators).

\begin{figure}[h!]

\centering
\subfloat[Relative residual $(\bx_k)_\kkk$,\newline inward star-shaped subgraph]{\includegraphics[width=0.33\textwidth]{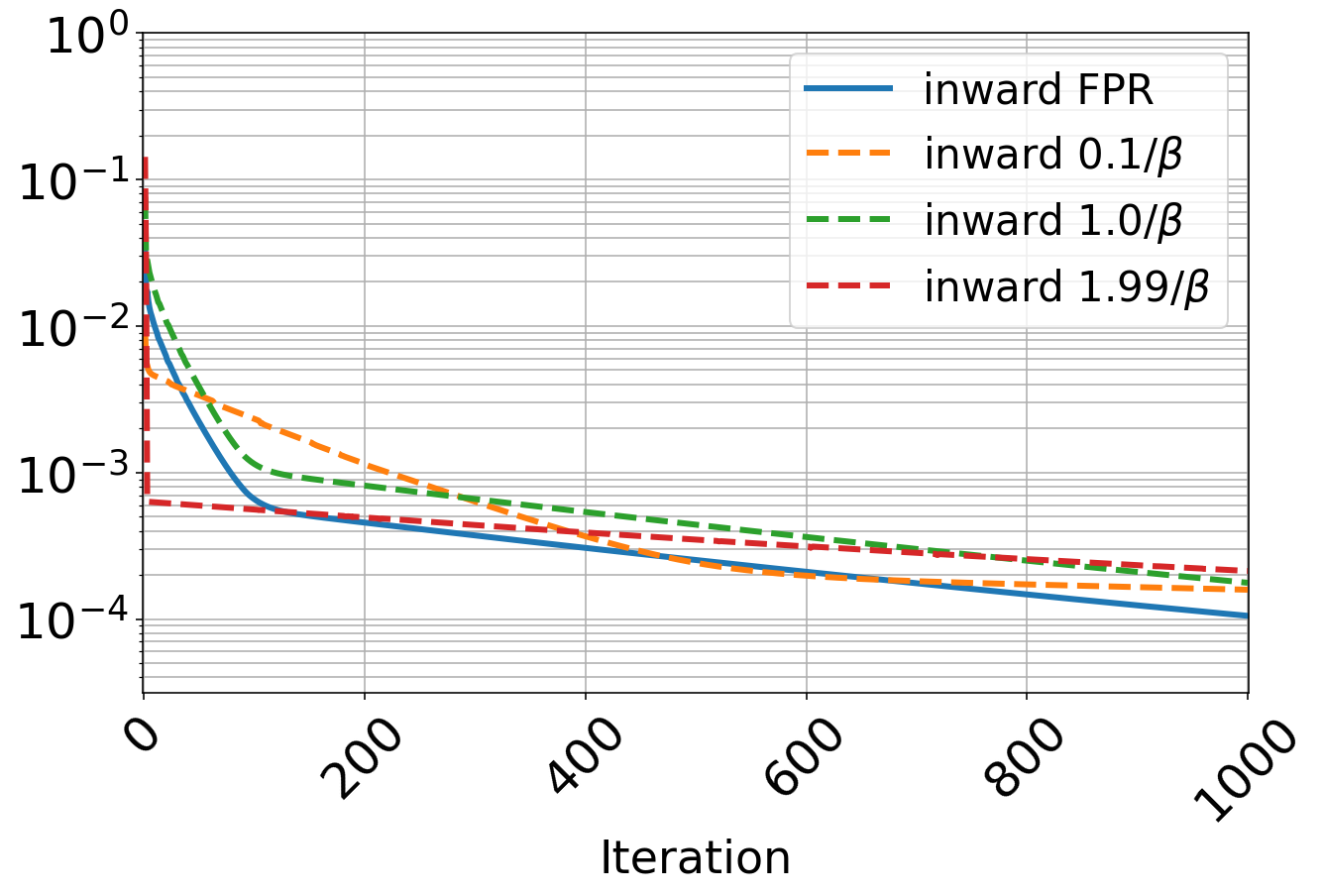}}\hfil
\subfloat[Relative residual $(\bx_k)_\kkk$,\newline outward star-shaped subgraph]{\includegraphics[width=0.33\textwidth]{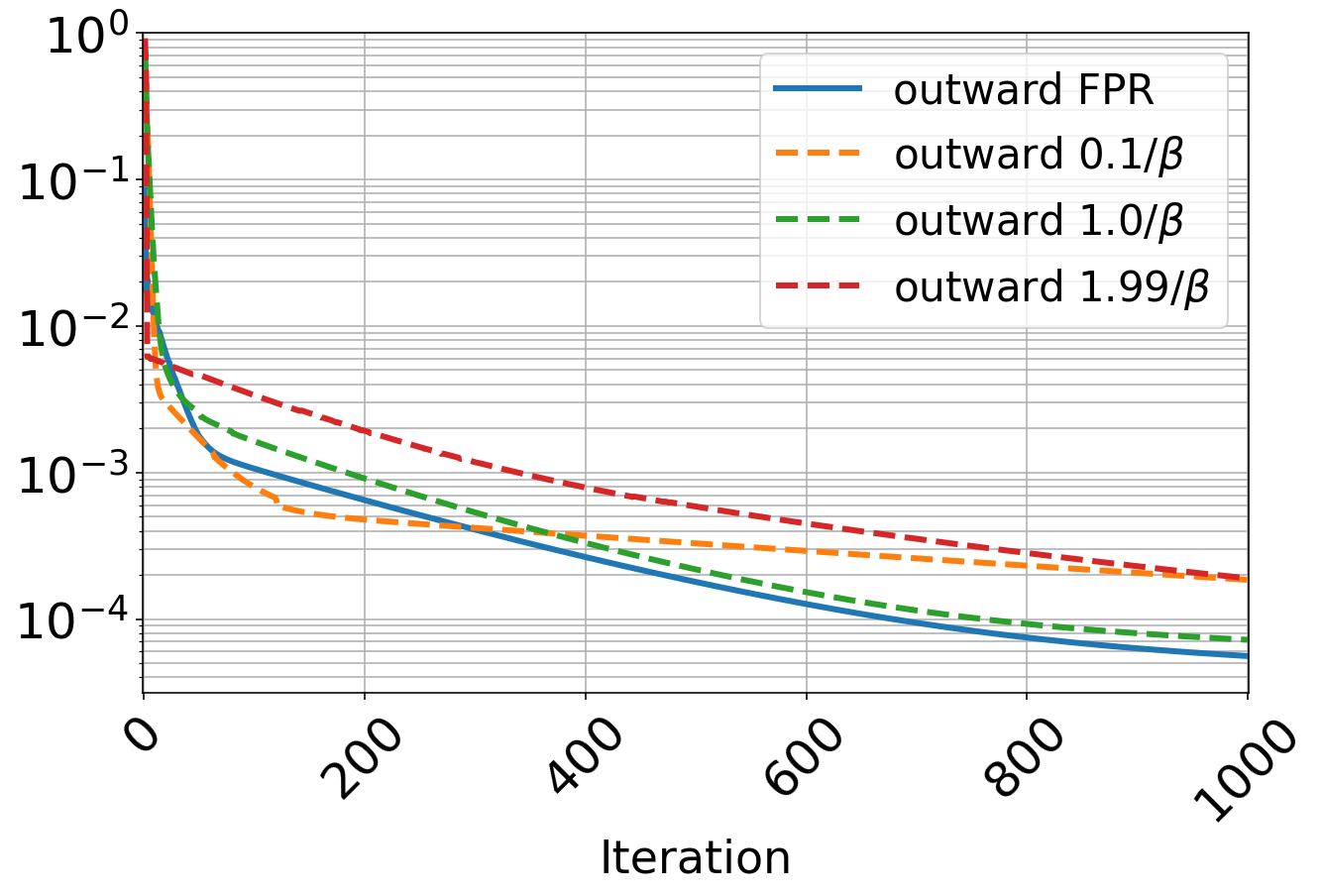}}\hfil 
\subfloat[Relative residual $(\bx_k)_\kkk$,\newline sequential subgraph]{\includegraphics[width=0.33\textwidth]{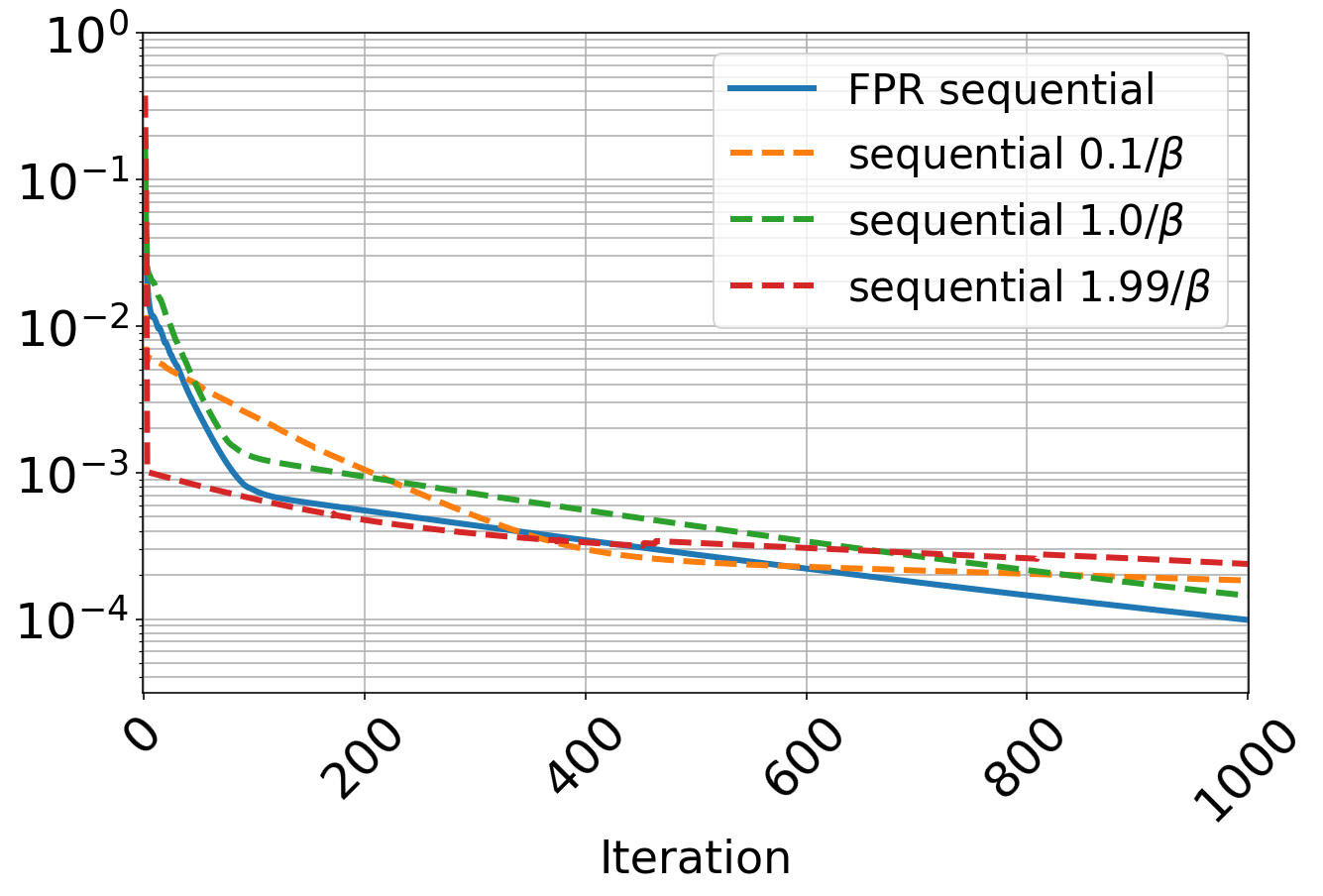}} 

\subfloat[Relative residual $(\phi(\bx_k))_\kkk$,\newline inward star-shaped subgraph]{\includegraphics[width=0.33\textwidth]{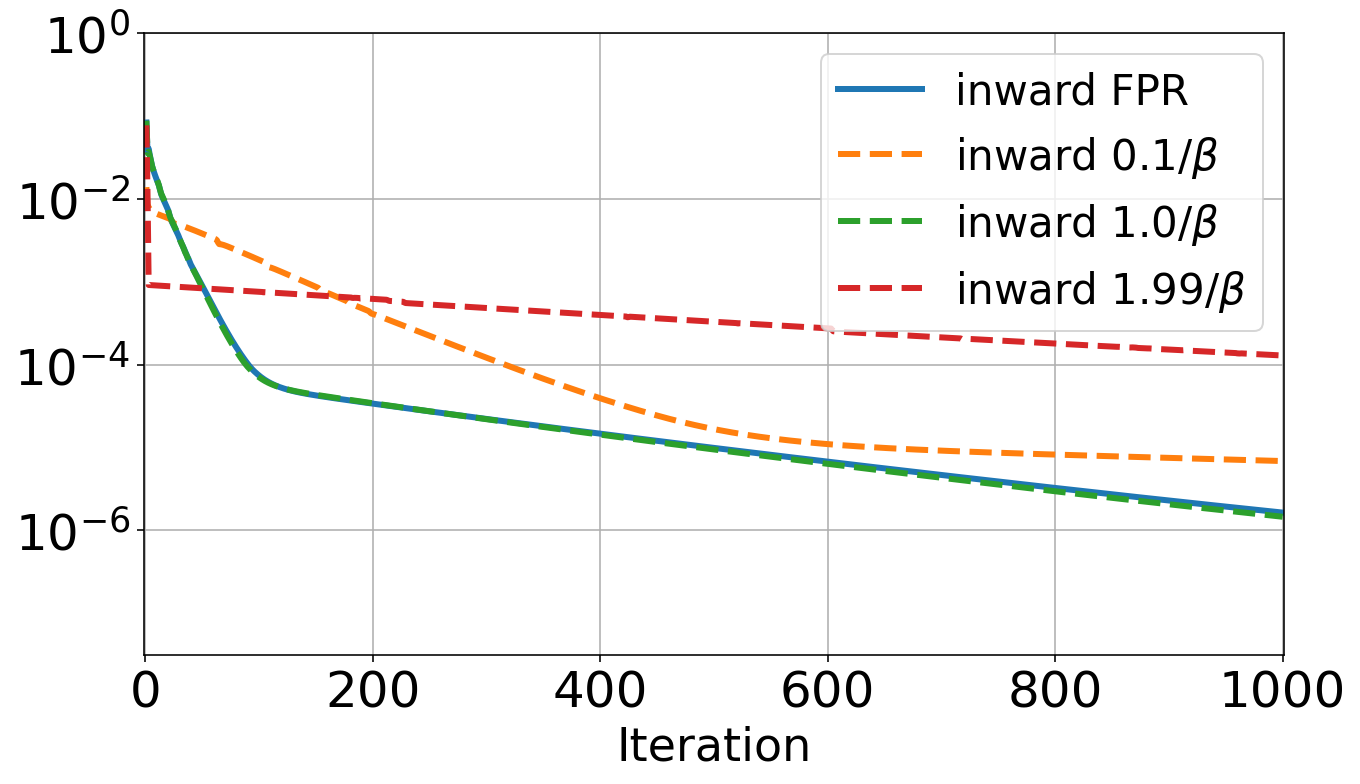}}\hfil   
\subfloat[Relative residual $(\phi(\bx_k))_\kkk$,\newline outward star-shaped subgraph]{\includegraphics[width=0.33\textwidth]{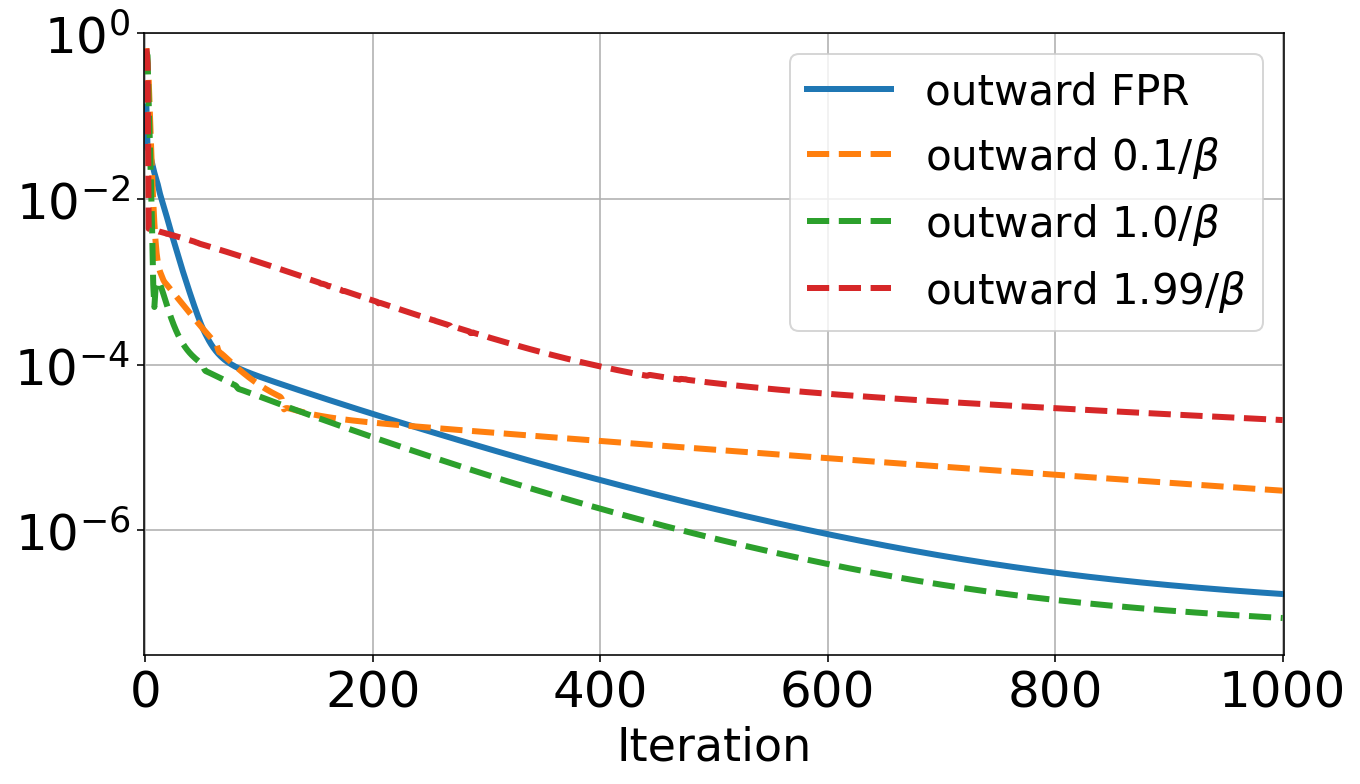}}\hfil
\subfloat[Relative residual $(\phi(\bx_k))_\kkk$,\newline sequential subgraph]{\includegraphics[width=0.33\textwidth]{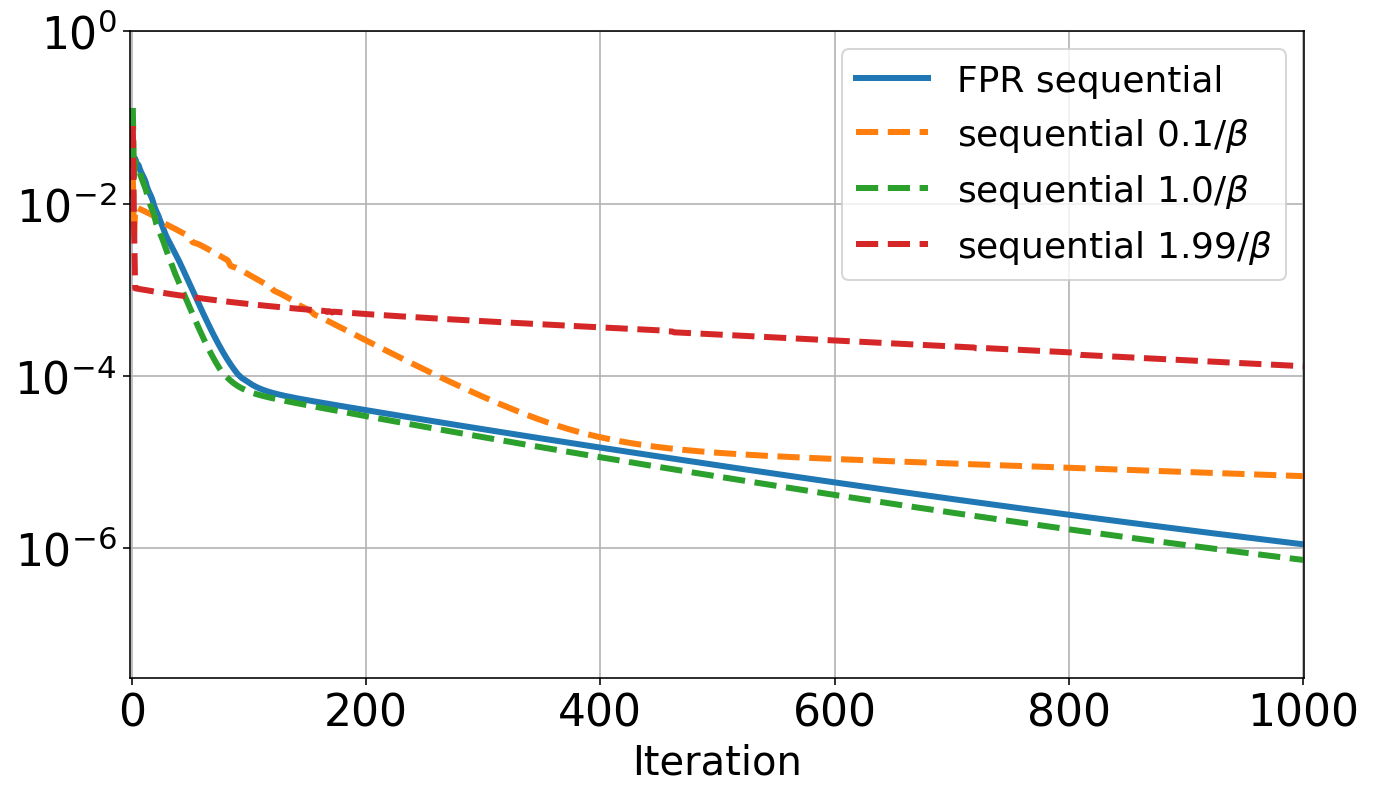}}
\caption{Relative error of iterates and function values generated by Algorithm~\ref{a:DY} for different stepsize regimes to solve problem \eqref{e:elastic-net-obj}, where FPR is the stepsize rule  \eqref{eq:nonstat}. }\label{fig:elastic}
\end{figure} Figure~\ref{fig:elastic} shows that, between the three regimes of constant stepsizes $\gamma = \frac{0.1}{\beta}$, $\gamma = \frac{1}{\beta}$, and $\gamma = \frac{1.99}{\beta}$, there are no consistent results for the best performance, although $\gamma = \frac{1}{\beta}$ is arguably the best choice. This behaviour is clearly an issue, since there is no a priori rule to choose a constant stepsize. Nevertheless, the heuristic to choose variable stepsizes is competitive in this setting. The authors expect that better consistent outcomes can be obtained with a rule defined to reduce the magnitude of the  maximum eigenvalue of the underlying iteration operator, akin to \cite{Lorenz-Tran-Dinh}.

\section{Conclusion} \label{s:conclusion}

In this work, we develop a framework of forward-backward methods with variable stepsize to solve multioperator monotone inclusions, based on the relocated fixed-point iterations theory. This type of method requires the definition of a so-called fixed-point relocator. We show that for forward-backward methods, these are closely related to the fixed-point relocators presented in \cite{atenas2025relocated} for pure resolvent splitting methods. Convergence guarantees are established, and graph-based implementations are also discussed with their corresponding fixed-point relocators.

The results obtained in this paper can be deemed as the first step for designing efficient splitting methods for distributed problems. The second step would correspond to devising theoretically grounded stepsize rules that consistently display better numerical performance. As shown in the numerical experiments section, the relocated fixed-point iterations setting represents a promising alternative to tune the stepsize parameters automatically. Further research is required to discover fully the practical benefits of relocated fixed-point iterations.

\noindent\textbf{Acknowledgments}. The research of FA, MND and MKT was supported in part by Australian Research Council grant DP230101749. 

\noindent\textbf{Data availability}.  The datasets and code used in this work to run the experiments are available at \href{https://github.com/fatenasm/relocated-forward-backward}{github.com/fatenasm/relocated-forward-backward}.

\bibliographystyle{abbrv}
\bibliography{biblio}

@misc{air_quality_360,
  author       = {Vito, Saverio},
  title        = {{Air Quality}},
  year         = {2008},
  howpublished = {UCI Machine Learning Repository},
  note         = {{DOI}: https://doi.org/10.24432/C59K5F}
}

@article{Wittmann1992Approximation,
  title={Approximation of fixed points of nonexpansive mappings},
  author={Rainer Wittmann},
  journal={Archiv der Mathematik},
  year={1992},
  volume={58},
  pages={486-491}
}

@inproceedings{pedregosa2018adaptive,
  title={Adaptive three operator splitting},
  author={Pedregosa, Fabian and Gidel, Gauthier},
  booktitle={International Conference on Machine Learning},
  pages={4085--4094},
  year={2018},
  organization={PMLR}
}

@article{atenas2025linear,
  title={Linear convergence of relocated fixed-point iterations},
  author={Atenas, Felipe and Simi, Farhana Ahmed and Tam, Matthew K},
  journal={arXiv preprint arXiv:2512.12954},
  year={2025}
}

@article{bredies2024graph,
  title={Graph and distributed extensions of the {D}ouglas--{R}achford method},
  author={Bredies, Kristian and Chenchene, Enis and Naldi, Emanuele},
  journal={SIAM Journal on Optimization},
  volume={34},
  number={2},
  pages={1569--1594},
  year={2024},
  publisher={SIAM}
}

@article{Passty1979ErgodicCT,
  title={Ergodic convergence to a zero of the sum of monotone operators in {H}ilbert space},
  author={Gregory B. Passty},
  journal={Journal of Mathematical Analysis and Applications},
  year={1979},
  volume={72},
  pages={383-390},
  url={https://api.semanticscholar.org/CorpusID:120790142}
}

@article{Lions1979SplittingAF,
  title={Splitting Algorithms for the Sum of Two Nonlinear Operators},
  author={Pierre-Louis Lions and Bertrand Mercier},
  journal={SIAM Journal on Numerical Analysis},
  year={1979},
  volume={16},
  pages={964-979}
}

@article{Bruck1975AnIS,
  title={An iterative solution of a variational inequality for certain monotone operators in {H}ilbert space},
  author={Ronald E. Bruck},
  journal={Bulletin of the American Mathematical Society},
  year={1975},
  volume={81},
  pages={890-892}
}

@article{condat2023proximal,
  title={Proximal splitting algorithms for convex optimization: A tour of recent advances, with new twists},
  author={Condat, Laurent and Kitahara, Daichi and Contreras, Andr{\'e}s and Hirabayashi, Akira},
  journal={SIAM Review},
  volume={65},
  number={2},
  pages={375--435},
  year={2023},
  publisher={SIAM}
}

@article{bartz2020demiclosedness,
  title={Demiclosedness principles for generalized nonexpansive mappings},
  author={Bartz, Sedi and Campoy, Rub{\'e}n and Phan, Hung M},
  journal={Journal of Optimization Theory and Applications},
  volume={186},
  number={3},
  pages={759--778},
  year={2020},
  publisher={Springer}
}

@inproceedings{nesterov1983method,
  title={A method for solving the convex programming problem with convergence rate ${O}(1/k^2)$},
  author={Nesterov, Yurii},
  booktitle={Dokl. {A}kad. {N}auk {SSSR}},
  volume={269},
  pages={543},
  year={1983}
}

@article{tibshirani2005sparsity,
  title={Sparsity and smoothness via the fused lasso},
  author={Tibshirani, Robert and Saunders, Michael and Rosset, Saharon and Zhu, Ji and Knight, Keith},
  journal={Journal of the Royal Statistical Society Series B: Statistical Methodology},
  volume={67},
  number={1},
  pages={91--108},
  year={2005},
  publisher={Oxford University Press}
}

@article{candes2012exact,
  title={Exact matrix completion via convex optimization},
  author={Candes, Emmanuel and Recht, Benjamin},
  journal={Communications of the ACM},
  volume={55},
  number={6},
  pages={111--119},
  year={2012},
  publisher={ACM New York, NY, USA}
}

@article{browder1968semicontractive,
  title={Semicontractive and semiaccretive nonlinear mappings in {B}anach spaces},
  author={Browder, Felix E},
  year={1968},
journal={Bulletin of the American Mathematical Society},
volume={74},
pages={660--665}
}

@article{bartz2022conical,
  title={Conical averagedness and convergence analysis of fixed point algorithms},
  author={Bartz, Sedi and Dao, Minh N and Phan, Hung M},
  journal={Journal of Global Optimization},
  volume={82},
  number={2},
  pages={351--373},
  year={2022},
  publisher={Springer}
}

@article{zou2005regularization,
  title={Regularization and variable selection via the elastic net},
  author={Zou, Hui and Hastie, Trevor},
  journal={Journal of the Royal Statistical Society Series B: Statistical Methodology},
  volume={67},
  number={2},
  pages={301--320},
  year={2005},
  publisher={Oxford University Press}
}

@article{tibshirani1996regression,
  title={Regression shrinkage and selection via the lasso},
  author={Tibshirani, Robert},
  journal={Journal of the Royal Statistical Society Series B: Statistical Methodology},
  volume={58},
  number={1},
  pages={267--288},
  year={1996},
  publisher={Oxford University Press}
}

@article{beck2009fast,
  title={A fast iterative shrinkage-thresholding algorithm for linear inverse problems},
  author={Beck, Amir and Teboulle, Marc},
  journal={SIAM Journal on Imaging Sciences},
  volume={2},
  number={1},
  pages={183--202},
  year={2009},
  publisher={SIAM}
}

@article{arakcheev2025opial,
  title={On {O}pial's Lemma},
  author={Arakcheev, Aleksandr and Bauschke, Heinz H},
  journal={arXiv preprint arXiv:2503.22004},
  year={2025}
}

@article{davis2017three,
  title={A three-operator splitting scheme and its optimization applications},
  author={Davis, Damek and Yin, Wotao},
  journal={Set-valued and {V}ariational {A}nalysis},
  volume={25},
  number={4},
  pages={829--858},
  year={2017},
  publisher={Springer}
}

@article{dao2025general,
  title={A general approach to distributed operator splitting},
  author={Dao, Minh N and Tam, Matthew K and Truong, Thang D},
  journal={arXiv preprint arXiv:2504.14987},
  year={2025}
}

@article{Lorenz-Tran-Dinh,
  title={Non-stationary {D}ouglas--{R}achford and alternating direction method of multipliers: adaptive step-sizes and convergence},
  author={Lorenz, Dirk A and Tran-Dinh, Quoc},
  journal={Computational Optimization and Applications},
  volume={74},
  pages={67--92},
  year={2019},
  publisher={Springer}
}

@book{BC2017,
  title={Convex Analysis and Monotone Operator Theory in {H}ilbert Spaces},
  author={Bauschke, H.H. and Combettes, P.L.},
  series={CMS Books in Mathematics},
  year={2017},
  publisher={Springer International Publishing}
}

@article{aragon2025forward,
  title={Forward-backward algorithms devised by graphs},
  author={Arag{\'o}n-Artacho, Francisco J and Campoy, Rub{\'e}n and L{\'o}pez-Pastor, C{\'e}sar},
  journal={SIAM Journal on Optimization},
  volume={35},
  number={4},
  pages={2423--2451},
  year={2025},
  publisher={SIAM}
}

@article{atenas2025relocated,
  title={Relocated Fixed-Point Iterations with Applications to Variable Stepsize Resolvent Splitting},
  author={Atenas, Felipe and Bauschke, Heinz H and Dao, Minh N and Tam, Matthew K},
  journal={arXiv preprint arXiv:2507.07428},
  year={2025}
}

@incollection{davis2017convergence,
  title={Convergence rate analysis of several splitting schemes},
  author={Davis, Damek and Yin, Wotao},
  booktitle={Splitting methods in communication, imaging, science, and engineering},
  pages={115--163},
  year={2017},
  publisher={Springer}
}

@book{polyak1987introduction,
  title={Introduction to Optimization},
  author={Poliyak, B.T.},
  series={Translations series in mathematics and engineering},
  year={1987},
  publisher={Optimization Software, Publications Division}
}

\appendix

\section{Proof of Theorem~\ref{t:dist-FB-convergence}} \label{a:convergence}

We will divide the proof of Theorem~\ref{t:dist-FB-convergence} in steps. First, we show convergence of the relocated fixed-point iteration.

\begin{proposition} \label{p:aux-1}

Under the assumptions of Theorem~\ref{t:dist-FB-convergence}, the following hold.

\begin{enumerate}
    \item \label{p:aux-1-i} $(\bz_k)_\kkk$ converges weakly to a point $\Bar{\bz} \in \Fix T_{\Bar{\theta},\Bar{\gamma}} $ for any $\Bar{\theta} \in \R_{++}$, and $\bz_k - T_{\theta_k,\gamma_k}\bz_k \to 0$.
    \item \label{p:aux-1-ii} For any $s \in (\R \bone)^\perp$, $\sum_{i=1}^n s_i \bx^{\gamma_k}_i(\bz_k) \to 0$.
\end{enumerate}
    
\end{proposition}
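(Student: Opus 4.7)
The plan is to recognise that the iteration \eqref{e:var-FB} is exactly a relocated fixed-point iteration as covered by Theorem~\ref{t:FPR-convergence}, and then deduce both conclusions from its guarantees together with the structure of the forward-backward operator.

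First I would rewrite the update in \eqref{e:var-FB} in the canonical form \eqref{e:z-iteration}: since $T_{\theta_k,\gamma_k}\bz_k = \bz_k - \theta_k \bM^*\bx_k$, we have $\bw_k = (1-\lambda_k)\bz_k + \lambda_k T_{\theta_k,\gamma_k}\bz_k$, so $\bz_{k+1} = Q_{\gamma_{k+1}\gets\gamma_k}((1-\lambda_k)\bz_k + \lambda_k T_{\theta_k,\gamma_k}\bz_k)$. Next, I would check each hypothesis of Theorem~\ref{t:FPR-convergence}. Lemma~\ref{l:con-ave} gives conical $\eta(\theta,\gamma)$-averagedness of $T_{\theta,\gamma}$ with $\eta(\theta,\gamma) = \frac{2\theta}{2-\gamma\mu}$ on the closed parameter set $\Theta \times \Gamma$ where $\Theta$ is any closed interval in $\R_{++}$ containing $(\theta_k)_\kkk$; Proposition~\ref{l:known-e}\ref{l:known-e-0} with the assumption $\zer(\sum_i A_i+\sum_j B_j)\neq\varnothing$ ensures $\Fix T_{\theta,\gamma}\neq\varnothing$, and, crucially, $\Fix T_{\theta,\gamma}$ does not depend on $\theta$ since $\bz\in\Fix T_{\theta,\gamma}$ is equivalent to $\bM^*\bx^\gamma(\bz)=0$. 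Proposition~\ref{p:FPR-Q} supplies the \Qname{s}, and Remark~\ref{r:Lip-const} converts the summability assumption $\sum_\kkk(\gamma_{k+1}-\gamma_k)_+ < +\infty$ into the required $\sum_\kkk(\Lip_{\gamma_{k+1}\gets\gamma_k}-1)<+\infty$.

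For the $\liminf$ condition, I would plug in $\eta_k=\frac{2\theta_k}{2-\gamma_k\mu}$ and simplify:
\begin{equation*}
\eta_k^{-1}\lambda_k(1-\eta_k\lambda_k) = \frac{\lambda_k\bigl(2-\gamma_k\mu - 2\theta_k\lambda_k\bigr)}{2\theta_k},
\end{equation*}
which is bounded below away from zero because $(\lambda_k)_\kkk$ is separated from zero, $(\theta_k)_\kkk$ is bounded, and $\liminf_\kkk(2-\gamma_k\mu - 2\lambda_k\theta_k) > 0$ by assumption. Continuity of $(\theta,\gamma)\mapsto T_{\theta,\gamma}\bz$ is clear from \eqref{e:FB-operator}--\eqref{e:FB-resolvent}, Lemma~\ref{l:known-J}\ref{l:known-J-2}, and cocoercivity (hence continuity) of the $B_j$, computed coordinatewise by induction on $i$. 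Finally, $\liminf_\kkk\eta_k>0$ follows from $(\theta_k)_\kkk$ separated from zero and $(\gamma_k)_\kkk$ bounded away from $2/\mu$. Theorem~\ref{t:FPR-convergence}\ref{t:FPR-convergence-ii}-\ref{t:FPR-convergence-iii} then yields $\bz_k-T_{\theta_k,\gamma_k}\bz_k\to 0$ and $\bz_k\weak \Bar{\bz}\in\Fix T_{\liminf_\kkk\theta_k,\Bar{\gamma}}$; since the fixed-point set does not depend on the first parameter, $\Bar{\bz}\in\Fix T_{\Bar{\theta},\Bar{\gamma}}$ for any $\Bar{\theta}\in\R_{++}$. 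This gives~\ref{p:aux-1-i}.

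For~\ref{p:aux-1-ii}, I would use $\bz_k-T_{\theta_k,\gamma_k}\bz_k = \theta_k\bM^*\bx^{\gamma_k}(\bz_k)\to 0$ together with $(\theta_k)_\kkk$ separated from zero to conclude $\bM^*\bx^{\gamma_k}(\bz_k)\to 0$ strongly in $X^m$. Assumption~\ref{a:stand}\ref{a:stand_kerM} gives $\ker(M^*)=\R\bone$, hence $\range(M)=(\R\bone)^\perp$. Thus any $s\in(\R\bone)^\perp$ can be written $s = Mr$ for some $r\in\R^m$, and
\begin{equation*}
\sum_{i=1}^n s_i\,\bx^{\gamma_k}_i(\bz_k) = \sum_{i=1}^n(Mr)_i\,\bx^{\gamma_k}_i(\bz_k) = \sum_{j=1}^m r_j\bigl(\bM^*\bx^{\gamma_k}(\bz_k)\bigr)_j \longrightarrow 0.
\end{equation*}

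No single step looks hard here; the main care is bookkeeping in verifying the hypotheses of Theorem~\ref{t:FPR-convergence} (particularly the continuity of $T_{\theta,\gamma}\bz$ in both parameters and the simplification of the averagedness constant), and translating the summability of $(\gamma_{k+1}-\gamma_k)_+$ to that of $(\Lip_{\gamma_{k+1}\gets\gamma_k}-1)$ via Remark~\ref{r:Lip-const}.
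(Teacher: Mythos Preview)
Your proof is correct and follows essentially the same route as the paper: rewrite \eqref{e:var-FB} as a relocated fixed-point iteration, verify the hypotheses of Theorem~\ref{t:FPR-convergence} via Lemma~\ref{l:con-ave}, Proposition~\ref{l:known-e}\ref{l:known-e-0}, Proposition~\ref{p:FPR-Q}, and Remark~\ref{r:Lip-const}, and then invoke it. Your explicit derivation of~\ref{p:aux-1-ii} from $\theta_k\bM^*\bx^{\gamma_k}(\bz_k)=\bz_k-T_{\theta_k,\gamma_k}\bz_k\to 0$ together with $\range(M)=(\R\bone)^\perp$ is a welcome addition, since the paper's proof block in fact only spells out~\ref{p:aux-1-i} and leaves~\ref{p:aux-1-ii} implicit.
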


\begin{proof}

 From \eqref{e:FB-operator}, for all $\lambda \in \R$ and $(\theta,\gamma) \in \Theta\times\Gamma$, $(1 - \lambda) I + \lambda T_{\theta,\gamma} = I - \lambda \theta \bM^* \bx^\gamma$. Hence, the sequence $(\bz_k)_\kkk$ generated by \eqref{e:var-FB} corresponds to \eqref{e:z-iteration}. 
From Lemma~\ref{l:con-ave}, $T_{\theta,\gamma}$ is conically $\eta$-averaged for $\eta = \frac{2\theta}{2-\gamma\mu}>0$. From the assumption on $(\gamma_k)_\kkk$, $(\theta_k)_\kkk$ and $(\lambda_k)_\kkk$, there exist $\theta_{\min}, \theta_{\max}, \lambda_{\min}  \in \R_{++}$ such that for all $\kkk$, $\theta_k \in [\theta_{\min}, \theta_{\max}]$ and $\lambda_k \geq \lambda_{\min}$. Then \begin{equation*}
    \liminf_\kkk \eta_k \geq \frac{2\theta_{\min}}{2 - \Bar{\gamma}\mu}>0,
\end{equation*} and \begin{equation*}
    \liminf_\kkk\lambda_k \frac{1-\eta_k\lambda_k}{\eta_k} = \liminf_\kkk \frac{\lambda_k}{2\theta_k}(2 - \gamma_k\mu - 2\lambda_k\theta_k) \geq \frac{\lambda_{\min}}{2\theta_{\max}}\liminf_\kkk (2 - \gamma_k\mu - 2\lambda_k\theta_k)>0.
\end{equation*} 
Furthermore, for all $\gamma \in \Gamma$, $\theta_1,\theta_2 \in \Theta$, $\Fix T_{\theta_1,\gamma} = \Fix T_{\theta_2,\gamma} = \{\bz \in X^m: \bx^\gamma(\bz) \in \ker(\bM^*) \}$, and $\Fix T_{\theta_1,\gamma} \neq \emptyset$ from Lemma~\ref{l:known-e}\ref{l:known-e-0}. 
Since $(Q_{\delta\gets\gamma})_{\delta,\gamma \in \Gamma}$ are \Qname{s} of $(T_{\gamma})_{\gamma \in \Gamma}$ (Proposition~\ref{p:FPR-Q}), and from Lemma~\ref{l:known-J}\ref{l:known-J-2}, for all $\bz \in X^m$, $(\theta,\gamma) \mapsto T_{\theta,\gamma}\bz$ is continuous, then Theorem~\ref{t:FPR-convergence} yields the convergence of $(\bz_k)_\kkk$ and $(T_{\theta_k,\gamma_k}\bz_k)_\kkk$ to the same point $\Bar{\bz} \in \Fix T_{\Bar{\theta},\bar{\gamma}}$, and $\bz_k - T_{\theta_k,\gamma_k}\bz_k \to 0$.
    
\end{proof}

The next result establishes key relations between the relocated fixed-point iteration and the resolvent steps.

\begin{lemma} \label{l:aux}

Suppose $A_1, \dots, A_n$ are  maximally monotone operators on $X$, and $B_1, \dots, B_{p}$ are  single-valued Lipschitz continuous with constant $\beta\in\R_{++}$, monotone operators. For every $i = 1, \dots, n$ and $x \in X$, set $ F_i x =  \sum_{j=1}^{\min\{i-1,p\}}P_{ij}B_jx$ from  \eqref{e:aux-eq}. Then, the following hold.

\begin{enumerate}
    \item \label{l:aux-max-mon} The operator $\mathcal{S}\colon X^n \rightrightarrows X^n$  given by
\begin{align*}
\mathcal{S} := \begin{bmatrix}
(A_1 + F_1)^{-1} \\
(A_2 + F_2)^{-1} \\
\vdots \\
(A_{n-1} + F_{n-1})^{-1} \\
A_n + F_n
\end{bmatrix} 
+\begin{bmatrix}
0 & 0 & \dots & 0 & -\Id \\
0 & 0 & \dots & 0 & -\Id \\
\vdots & \ddots & \vdots & \vdots & \vdots\\
0 & 0 & \dots & 0 & -\Id \\
\Id & \Id & \dots & \Id & 0
\end{bmatrix} 
\end{align*} is maximally monotone.

\item \label{l:aux-inclusion} For all $\bz \in X^m$ and $\gamma \in \R_{++}$, define \begin{equation*}
    \bp^{\gamma}(\bz) := \begin{bmatrix}
\bx_1^{\gamma}(\bz) -\bx_n^{\gamma}(\bz) \\
\bx_2^{\gamma}(\bz) -\bx_n^{\gamma}(\bz) \\
\vdots \\
\bx_{n-1}^{\gamma}(\bz) -\bx_n^{\gamma}(\bz) \\
\sum_{i=1}^n \frac{d_i}{\gamma}(\bu_i^{\gamma}(\bz) -\bx_i^{\gamma}(\bz)) +\sum_{i=1}^n \Delta_i^{\gamma}(\bz)
\end{bmatrix}, 
\end{equation*}  where\begin{equation*} 
    \left\{\begin{aligned}
        \bu_1^\gamma(\bz) &:= \frac{1}{d_1}(\bM\bz)_1\\
        \bu_i^\gamma(\bz) &:=  \frac{1}{d_i}(\bM\bz)_i + \frac{1}{d_i}(\bN\bx^\gamma(\bz))_{\le i-1} 
        \quad \text{~for~} i=2, \dots, n,
    \end{aligned}\right.
\end{equation*} and \begin{equation*}
    \Delta_i^{\gamma}(\bz) := F_i \bx_i^\gamma(\bz) - \big(\bP\bB\bR\bx^\gamma(\bz)\big)_{\leq\min\{i-1,p\}}.
\end{equation*}Also, define \begin{equation*}
    \bq^{\gamma}(\bz) := \begin{bmatrix}
\frac{d_1}{\gamma}(\bu_1^{\gamma}(\bz) -\bx_1^{\gamma}(\bz)) + \Delta_1^{\gamma}(\bz) \\
\vdots \\
\frac{d_{n-1}}{\gamma}(\bu_{n-1}^{\gamma}(\bz) -\bx_{n-1}^{\gamma}(\bz)) + \Delta_{n-1}^{\gamma}(\bz) \\
\bx_n^{\gamma}(\bz)
\end{bmatrix}.
\end{equation*} Then $\bp^{\gamma}(\bz)
\in
\mathcal{S}\left(\bq^{\gamma}(\bz)\right)$ and \begin{equation*}
    \frac{1}{\gamma}\sum_{i=1}^nd_i \bu_i^\gamma(\bz) - \frac{1}{\gamma}\sum_{i=1}^n\sum_{j=1}^{i-1}N_{ij}\bx_i^\gamma(\bz) 
    = 0.
\end{equation*}

\end{enumerate}
    
\end{lemma}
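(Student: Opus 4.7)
The plan is to attack the two claims in sequence: first verify maximal monotonicity of $\mathcal{S}$ from standard building-block principles, then unfold the resolvent identity in \eqref{e:FB-resolvent} component-wise to produce the inclusion $\bp^\gamma(\bz)\in\mathcal{S}(\bq^\gamma(\bz))$.

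For part \ref{l:aux-max-mon}, I would start by observing that, for each $i$, the operator $F_i=\sum_{j=1}^{\min\{i-1,p\}}P_{ij}B_j$ is single-valued, monotone, and Lipschitz continuous on all of $X$ (monotonicity here comes from the operators $B_j$ being monotone together with the nonnegativity of the relevant entries of $P$ used in the construction, and Lipschitzness from the $\beta$-Lipschitz property of each $B_j$). Then $A_i+F_i$ is maximally monotone as the sum of a maximally monotone operator and a monotone, continuous, everywhere-defined operator (e.g.\ \cite[Corollary~25.5]{BC2017}), so its inverse is also maximally monotone. The ``diagonal'' operator listed first in $\mathcal{S}$ is therefore the direct product of maximally monotone operators on $X^n$, hence maximally monotone. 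The second summand is a bounded linear operator whose symmetric part vanishes, so it is monotone with full domain and thus maximally monotone. Maximality of $\mathcal{S}$ then follows from maximality of each summand together with the fact that the skew part has domain $X^n$ (again \cite[Corollary~25.5]{BC2017}).

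For part \ref{l:aux-inclusion}, the key identity is the characterisation of the resolvent in \eqref{e:FB-resolvent}. Unpacking $\bx_i^\gamma(\bz)=J_{\frac{\gamma}{d_i}A_i}\bigl(\bu_i^\gamma(\bz)-\tfrac{\gamma}{d_i}(\bP\bB\bR\bx^\gamma(\bz))_{\leq\min\{i-1,p\}}\bigr)$ yields
\[
\frac{d_i}{\gamma}(\bu_i^\gamma(\bz)-\bx_i^\gamma(\bz))-\bigl(\bP\bB\bR\bx^\gamma(\bz)\bigr)_{\leq\min\{i-1,p\}}\in A_i\bx_i^\gamma(\bz).
\]
Adding $F_i\bx_i^\gamma(\bz)$ to both sides, the left-hand side becomes precisely $\bq_i^\gamma(\bz)=\frac{d_i}{\gamma}(\bu_i^\gamma(\bz)-\bx_i^\gamma(\bz))+\Delta_i^\gamma(\bz)$ for $i=1,\dots,n-1$, giving $\bx_i^\gamma(\bz)\in(A_i+F_i)^{-1}\bq_i^\gamma(\bz)$. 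Combined with $\bq_n^\gamma(\bz)=\bx_n^\gamma(\bz)$, the $i$-th component of $\mathcal{S}(\bq^\gamma(\bz))$ is then $(A_i+F_i)^{-1}\bq_i^\gamma(\bz)-\bq_n^\gamma(\bz)\ni\bx_i^\gamma(\bz)-\bx_n^\gamma(\bz)=\bp_i^\gamma(\bz)$. For $i=n$, the same resolvent identity gives $\tfrac{d_n}{\gamma}(\bu_n^\gamma(\bz)-\bx_n^\gamma(\bz))+\Delta_n^\gamma(\bz)\in(A_n+F_n)\bx_n^\gamma(\bz)$, and adding $\sum_{j=1}^{n-1}\bq_j^\gamma(\bz)$ to both sides produces exactly the $n$-th component of $\bp^\gamma(\bz)$, closing the inclusion.

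For the final identity, I would expand each $\bu_i^\gamma(\bz)$ via its definition to obtain
\[
\sum_{i=1}^n d_i\bu_i^\gamma(\bz)=\sum_{i=1}^n(\bM\bz)_i+\sum_{i=2}^n(\bN\bx^\gamma(\bz))_{\leq i-1}.
\]
Assumption~\ref{a:stand}\ref{a:stand_kerM} gives $\bone\in\ker(\bM^*)$, so $\sum_{i=1}^n(\bM\bz)_i=0$, and the double sum $\sum_{i=2}^n\sum_{j=1}^{i-1}N_{ij}\bx_j^\gamma(\bz)$ matches the second term in the claimed identity (interpreting the index as $j$ using the lower-triangular structure of $N$ from Assumption~\ref{a:stand}\ref{a:stand_triangular}), so the identity reduces to the aforementioned orthogonality.

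The main obstacle will be bookkeeping rather than conceptual depth: ensuring that the cross-coupling term $(\bP\bB\bR\bx^\gamma(\bz))_{\leq\min\{i-1,p\}}$ cancels correctly when $F_i\bx_i^\gamma(\bz)$ is added, and that the last row of $\mathcal{S}$ picks up exactly the total $\sum_{i=1}^n\bigl[\tfrac{d_i}{\gamma}(\bu_i^\gamma(\bz)-\bx_i^\gamma(\bz))+\Delta_i^\gamma(\bz)\bigr]$ demanded by $\bp_n^\gamma(\bz)$. Verifying the monotonicity of $F_i$ in the present (not cocoercive) setting of the lemma is the only place where one must be careful about sign conventions on the entries of $P$.
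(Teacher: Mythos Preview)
Your proposal is correct and follows essentially the same route as the paper: establish maximal monotonicity of $\mathcal{S}$ via the sum rule applied to a diagonal product of maximally monotone operators and a skew linear map, then unpack the resolvent characterisation in \eqref{e:FB-resolvent} to obtain $\bx_i^\gamma(\bz)\in(A_i+F_i)^{-1}\bq_i^\gamma(\bz)$ and read off the inclusion row by row, with the final identity coming from $\sum_i(\bM\bz)_i=0$. Your caution about needing a sign condition on the entries of $P$ for the monotonicity of $F_i$ is apt; the paper's own proof simply asserts that $F_i$ is cocoercive (leaning on the ambient cocoercive setting of the surrounding framework rather than the weaker Lipschitz-plus-monotone hypothesis actually stated in the lemma), so the issue you flag is present there as well.
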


\begin{proof} For all $i = 1, \dots, n$, $A_i$ is maximally monotone, $F_i$ is maximally monotone with full domain (as it is cocoercive), and thus $A_n+F_n$ and $(A_i+F_i)^{-1}$ are maximally monotone \cite[Corollary 25.5(i), Proposition 20.22]{BC2017}. Since skew symmetric linear operators are maximally monotone with full domain \cite[Example 20.35]{BC2017}, then $\mathcal{S}$ is maximally monotone, concluding item \ref{l:aux-max-mon}. Next for item \ref{l:aux-inclusion}, from \eqref{e:FB-resolvent}, it follows that for all $i=1,\dots,n$, \[\frac{d_i}{\gamma}\big(\bu_i^\gamma(\bz)-\bx_i^\gamma(\bz)\big) - \big(\bP \bB \bR \bx^\gamma(\bz)\big)_{\le \min\{i-1,p\}} \in A_i\bx_i^\gamma(\bz).\] Adding $F_i\bx_i^\gamma(\bz)$ to both sides in the inclusion above and rearranging terms yields \[\bx_i^\gamma(\bz) \in (A_i+F_i)^{-1}\left(\frac{d_i}{\gamma}\big(\bu_i^\gamma(\bz)-\bx_i^\gamma(\bz)\big) + \Delta_i^\gamma(\bz)\right),\] and thus the inclusion $\bp^{\gamma}(\bz)
\in
\mathcal{S}\left(\bq^{\gamma}(\bz)\right)$ follows from the definition of the operator $\mathcal{S}$ and the vectors $\bp^{\gamma}(\bz)$ and $\bq^{\gamma}(\bz)$. For the second claim, the definition of $\bu^\gamma(\bz)=\big(\bu_1^\gamma(\bz),\dots,\bu_n^\gamma(\bz)\big)$ implies \begin{equation*}
    \sum_{i=1}^nd_i \bu_i^\gamma(\bz) - \big(N\bx^\gamma(\bz)\big)_{\leq i-1} 
    = \sum_{i=1}^n (\bM \bz)_i = 0,
\end{equation*} where the equality follows from \eqref{e:range-M} (cf.  the equation before \cite[eq. (20)]{dao2025general}).
\end{proof}

The next result states the asymptotic behaviour of the resolvent steps. 

\begin{proposition} \label{p:aux-2} Under the assumptions of Theorem~\ref{t:dist-FB-convergence}, let $(\bz_k)_\kkk$ and $(\bx_k)_\kkk$ be the sequences given in \eqref{e:var-FB}.  Define the sequences $(\bp^{\gamma_k}(\bz_k))_\kkk$ and  $(\bq^{\gamma_k}(\bz_k))_\kkk$ according to Lemma~\ref{l:aux}\ref{l:aux-inclusion}. Then the following hold.

    \begin{enumerate}
    \item \label{p:aux-2-i} $(\bx^{\gamma_k}(\bz_k))_\kkk$ is bounded, and all $(\bx^{\gamma_k}_i(\bz_k))_\kkk$, for $i=1,\dots, n$, have the same weak cluster points.
    
    \item \label{p:aux-2-ii} $\bp^{\gamma_k}(\bz_k)\to 0$.
    
    \item \label{p:aux-2-iii} 
    Let $\bar{x}$ be a weak cluster point of   $(\bx^{\gamma_k}_1(\bz_k))_\kkk$, $\Bar{\bx} :=(\Bar{x},\dots,\Bar{x})\in X^n$, and for every $i = 1, \dots, n$, let $ F_i \bar{x}$ be as in \eqref{e:aux-eq}. Define \begin{equation*} 
    \left\{\begin{aligned}
        \Bar{u}_1 &:= \frac{1}{d_1}(\bM\Bar{\bz})_1\\
        \Bar{u}_i &:=  \frac{1}{d_i}(\bM\Bar{\bz})_i + \frac{1}{d_i}(\bN\Bar{\bx})_{\le i-1} 
        \quad\text{~for~} i=2, \dots, n,
    \end{aligned}\right.
\end{equation*} where $\Bar{\bz}$ is the limit of $(\bz_k)_\kkk$ from Proposition~\ref{p:aux-1}\ref{p:aux-1-i}. Then, \begin{equation*}
    \Bar{q} := \begin{bmatrix}
\frac{d_1}{\Bar{\gamma}}(\Bar{u}_1 -\Bar{x}) \\ 
\vdots \\
\frac{d_{n-1}}{\Bar{\gamma}}(\Bar{u}_{n-1}-\bar{x}) \\
\Bar{x}
\end{bmatrix}
\end{equation*} is a weak cluster point of $(\bq^{\gamma_k}(\bz_k))_\kkk$.

\item \label{p:aux-2-iv} The sequence $(\bx_k)_\kkk$ converges weakly to a point $\Bar{\bx} = (\Bar{x}, \dots, \Bar{x}) \in X^n$, such that \eqref{e:x-zero} holds, and $\Bar{x}$ solves \eqref{e:distributed-monotone-inclusion}. 
    \end{enumerate}
\end{proposition}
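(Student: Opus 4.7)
For part~\ref{p:aux-2-i}, the plan is to begin with the residual relation
\[
\bz_k - T_{\theta_k,\gamma_k}\bz_k \;=\; \theta_k\bM^{*}\bx^{\gamma_k}(\bz_k) \;\to\; 0,
\]
which combines Proposition~\ref{p:aux-1}\ref{p:aux-1-i} with \eqref{e:FB-operator}. Since $(\theta_k)_\kkk$ is bounded away from zero, $\bM^{*}\bx^{\gamma_k}(\bz_k)\to 0$ strongly, and Assumption~\ref{a:stand}\ref{a:stand_kerM} identifies $\range(\bM)$ with $(\ker\bM^{*})^{\perp}$, from which Proposition~\ref{p:aux-1}\ref{p:aux-1-ii} follows; selecting $s = e_i - e_j$ yields $\bx_i^{\gamma_k}(\bz_k) - \bx_j^{\gamma_k}(\bz_k)\to 0$ in norm, so the coordinate sequences share the same set of weak cluster points. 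Boundedness of $(\bx^{\gamma_k}(\bz_k))_\kkk$ is a direct consequence of Proposition~\ref{l:known-e-i}, the boundedness of $(\bz_k)_\kkk$ from Proposition~\ref{p:aux-1}\ref{p:aux-1-i}, and the uniform boundedness of the Lipschitz constants $C_i(\gamma_k)$ on the convergent range of $(\gamma_k)_\kkk$.

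For part~\ref{p:aux-2-ii}, the first $n-1$ entries of $\bp^{\gamma_k}(\bz_k)$ are exactly the differences $\bx_i^{\gamma_k}(\bz_k)-\bx_n^{\gamma_k}(\bz_k)$ handled above. To treat the last entry I would invoke the identity $\sum_{i=1}^n d_i\bu_i^{\gamma_k}(\bz_k) = \sum_{i=1}^n(\bN\bx^{\gamma_k}(\bz_k))_{\le i-1}$ from Lemma~\ref{l:aux}\ref{l:aux-inclusion} together with the total-mass identity $\sum_{i,j}N_{ij} = \sum_i d_i$ from Assumption~\ref{a:stand}\ref{a:stand_N}. After swapping the order of summation and subtracting off the common component $\bx_1^{\gamma_k}(\bz_k)$, the term $\sum_{i=1}^n\frac{d_i}{\gamma_k}\bigl(\bu_i^{\gamma_k}(\bz_k)-\bx_i^{\gamma_k}(\bz_k)\bigr)$ telescopes into
\[
\frac{1}{\gamma_k}\sum_{j=1}^n\biggl(\sum_{i>j}N_{ij} - d_j\biggr)\bigl(\bx_j^{\gamma_k}(\bz_k)-\bx_1^{\gamma_k}(\bz_k)\bigr) \;\to\; 0
\]
by part~\ref{p:aux-2-i} and $\gamma_k\to\overline{\gamma}>0$. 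The remainder $\sum_{i=1}^n\Delta_i^{\gamma_k}(\bz_k)$ vanishes whenever all coordinates of $\bx^{\gamma_k}(\bz_k)$ coincide (this reduction was already exploited in the proof of Proposition~\ref{p:FPR-Q} and relies on Assumption~\ref{a:stand}\ref{a:stand_P},~\ref{a:stand_R}), so by $\beta$-Lipschitz continuity of each $B_j$ it is controlled by the same coordinate differences and also vanishes. The hard part will be this last-entry computation, as it uses essentially every structural assumption in Assumption~\ref{a:stand} at once.

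For part~\ref{p:aux-2-iii}, I would take a subsequence $(k_l)_l$ along which $\bx_1^{\gamma_{k_l}}(\bz_{k_l})\weak\overline{x}$. By part~\ref{p:aux-2-i}, the full vector $\bx^{\gamma_{k_l}}(\bz_{k_l})\weak\overline{\bx} = (\overline{x},\dots,\overline{x})$ along the same indices. Since $\bz_k\weak\overline{\bz}$ by Proposition~\ref{p:aux-1}\ref{p:aux-1-i} and $\bM,\bN$ are bounded linear operators, the affine formulas defining $\bu_i^{\gamma_{k_l}}(\bz_{k_l})$ pass weakly to $\overline{u}_i$. Combined with $\gamma_{k_l}\to\overline{\gamma}>0$ and the incidental fact $\Delta_i^{\gamma_{k_l}}(\bz_{k_l})\to 0$ established en route in part~\ref{p:aux-2-ii}, this delivers $\bq^{\gamma_{k_l}}(\bz_{k_l})\weak\overline{q}$.

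For part~\ref{p:aux-2-iv}, the plan is to invoke Lemma~\ref{l:aux}\ref{l:aux-max-mon}: $\mathcal{S}$ is maximally monotone, so $\gra\mathcal{S}$ is sequentially closed in $(X^n)^{\mathrm{weak}}\times(X^n)^{\mathrm{strong}}$. Combining Lemma~\ref{l:aux}\ref{l:aux-inclusion} with parts~\ref{p:aux-2-ii} and~\ref{p:aux-2-iii} yields $(\overline{q},0)\in\gra\mathcal{S}$, i.e.\ $0\in\mathcal{S}\overline{q}$. Unpacking the definition of $\mathcal{S}$, the first $n-1$ rows read $\frac{d_i}{\overline{\gamma}}(\overline{u}_i-\overline{x})\in(A_i+F_i)\overline{x}$ and the last row reads $-\sum_{i=1}^{n-1}\frac{d_i}{\overline{\gamma}}(\overline{u}_i-\overline{x})\in(A_n+F_n)\overline{x}$. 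Since $F_1 = 0$ (empty sum), the first inclusion rearranges to $\overline{x} = J_{\frac{\overline{\gamma}}{d_1}A_1}\bigl(\frac{1}{d_1}(\bM\overline{\bz})_1\bigr)$, which is uniquely determined by $\overline{\bz}$ and $\overline{\gamma}$; a standard subsequence-of-subsequence argument applied to the bounded sequence from part~\ref{p:aux-2-i} then upgrades the subsequential weak convergence to the full weak convergence $\bx_k\weak\overline{\bx}$. Finally, summing the $n$ inclusions and using $P^{*}\bone=\bone$ (so that $\sum_{i=1}^n F_i = \sum_{j=1}^p B_j$) produces $0 \in \sum_{i=1}^n A_i\overline{x} + \sum_{j=1}^p B_j\overline{x}$, establishing \eqref{e:x-zero}.
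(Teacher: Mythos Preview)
Your proposal is correct and follows essentially the same approach as the paper's own proof: the same use of Proposition~\ref{p:aux-1}\ref{p:aux-1-ii} with $s=e_i-e_j$ for part~\ref{p:aux-2-i}, the same decomposition of the last entry of $\bp^{\gamma_k}(\bz_k)$ via the identity in Lemma~\ref{l:aux}\ref{l:aux-inclusion} and the Lipschitz bound on $\Delta_i^{\gamma_k}(\bz_k)$ for part~\ref{p:aux-2-ii}, and the same demiclosedness-of-$\mathcal{S}$ argument for part~\ref{p:aux-2-iv}. Your telescoping computation for $\sum_i\frac{d_i}{\gamma_k}(\bu_i^{\gamma_k}-\bx_i^{\gamma_k})$ is in fact slightly more explicit than the paper's presentation, and your citation of Proposition~\ref{l:known-e-i} for boundedness should more precisely point to its proof (where the Lipschitz constants $C_i(\gamma)/d_i$ of $\bx_i^\gamma$ are established), but the substance is identical.
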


\begin{proof}
Boundedness in item \ref{p:aux-2-i} follows from nonexpansiveness of the resolvent, Lipschitz continuity of $B_i$ for $i=1,\dots, p$, and using an inductive argument in \eqref{e:FB-resolvent}. Proposition~\ref{p:aux-1}\ref{p:aux-1-ii} with $s = e_i - e_j$, $i,j = 1, \dots, n$, $i \neq j$, where $e_i$ is the $i$-canonical vector in $\R^n$, yields \begin{equation} \label{e:dx-to-0}
    \bx_i^{\gamma_k}(\bz_k) - \bx_j^{\gamma_k}(\bz_k) \to 0.
\end{equation} Hence, claim \ref{p:aux-2-i}  follows. For item \ref{p:aux-2-ii}, combine \eqref{e:dx-to-0} and the definition of $\bp^{\gamma_k}(\bz_k)$ in \ref{l:aux-inclusion} to get $\bp_i^{\gamma_k}(\bz_k) \to 0$ for all $i =1, \dots, n-1$. For the last coordinate of $\bp^{\gamma_k}(\bz_k)$,  observe that from the definition of $\Delta_i^\gamma(\bz)$, for all $i = 1, \dots, n$, \begin{align*}
    \|\Delta_i^\gamma(\bz)\| &\leq \sum_{j=1}^{\min\{i-1,p\}} \left\| P_{ij}B_j\bx_i^{\gamma}(\bz) -  P_{ij}B_j\left(\sum_{t=1}^{j-1} R_{jt}\bx_t^{\gamma}(\bz)\right) \right\| \\ 
    &\leq \|\bP\| \beta \sum_{j=1}^{\min\{i-1,p\}} \left\| \bx_i^{\gamma}(\bz) -  \sum_{t=1}^{j-1} R_{jt}\bx_t^{\gamma}(\bz) \right\| .
\end{align*} From Assumption~\ref{a:stand}\ref{a:stand_R} and~\ref{a:stand_triangular}, $\bx_i^\gamma(\bz) = \sum_{t=1}^{j-1} R_{jt}\bx_i^\gamma(\bz)$, then \begin{align*}
    \|\Delta_i^\gamma(\bz)\| &\leq \|\bP\| \beta \sum_{j=1}^{\min\{i-1,p\}} \left\| \sum_{t=1}^{j-1} R_{jt}(\bx_i^{\gamma}(\bz)-\bx_t^{\gamma}(\bz)) \right\|\\
     &\leq \|\bP\| \|\bR\| \beta \min\{i-1,p\} \sum_{t=1}^{j-1}\|\bx_i^{\gamma}(\bz)-\bx_t^{\gamma}(\bz))\|,
\end{align*} and thus $\Delta_i^{\gamma_k}(\bz_k) \to 0$ follows from \eqref{e:dx-to-0}. Moreover, from Lemma~\ref{l:aux}\ref{l:aux-max-mon} and Assumption~\ref{a:stand}\ref{a:stand_N}, 
    \begin{align*}
        \bp_n^{\gamma_k}(\bz_k) &= \frac{1}{\gamma_k}\sum_{i=1}^n\left(\sum_{j=1}^{i-1}N_{ij}-d_i\right)\bx_i^{\gamma_k}(\bz_k) + \sum_{i=1}^n \Delta_i^{\gamma_k}(\bz_k) = \sum_{i=1}^n \Delta_i^{\gamma_k}(\bz_k) \to 0, 
\end{align*} from where \ref{p:aux-2-ii} follows.  Item \ref{p:aux-2-iii} is a direct consequence of the fact that $\Delta_i^{\gamma_k}(\bz_k) \to 0$ for all $i = 1, \dots , n$, Proposition~\ref{p:aux-1}\ref{p:aux-1-i}, and \ref{p:aux-2-i}, which guarantees the existence of a weak cluster point $\Bar{x}$ of $(\bx^{\gamma_k}_i(\bz_k))_\kkk$ for all $i = 1, \dots, n$. For \ref{p:aux-2-iv}, from Lemma~\ref{l:aux}\ref{l:aux-max-mon} and the fact that maximally mononotone operators are demiclosed \cite[Corollary 20.38(ii)]{BC2017}, $\mathcal{S}$ is demiclosed. From Proposition~\ref{p:aux-2}\ref{p:aux-2-ii} and \ref{p:aux-2-iii}, taking the limit (up to a subsequence) in $\bp^{\gamma_k}(\bz_k) \in \mathcal{S}\left(\bq^{\gamma_k}(\bz_k)\right)$ (Lemma~\ref{l:aux}\ref{l:aux-inclusion}) gives $0 \in \mathcal{S}(\Bar{q})$. The latter inclusion and the definition of $\mathcal{S}$ imply \begin{equation*}
\frac{d_i}{\Bar{\gamma}}(\Bar{u}_i - \Bar{x}) \in A_i\Bar{x}+F_i\Bar{x} \quad \text{~for~} i = 1, \dots, n-1,
\end{equation*} and \begin{equation*}
   - \sum_{i=1}^{n-1}\frac{d_i}{\Bar{\gamma}}(\Bar{u}_i -\Bar{x}) \in A_n\Bar{x} +  F_n\Bar{x}.
\end{equation*} 
Hence, for  $i =1$, the definition of the resolvent yields \begin{equation*}
    \Bar{u}_1\in \left(\Id + \frac{\Bar{\gamma}}{d_1}A_1\right)\Bar{x} \iff \Bar{x} = J_{\frac{\Bar{\gamma}}{d_1}}(\Bar{u}_1) = J_{\frac{\Bar{\gamma}}{d_1}}\left(\tfrac{1}{d_1}(\bM \Bar{\bz})_1\right).
\end{equation*} Since $\Bar{\bz}$ is the unique limit of $(\bz_k)_\kkk$,  then all weak cluster points of $(\bx^{\gamma_k}_i(\bz_k))_\kkk$ coincide, and thus for all $i=1,\dots,n$,  $(\bx^{\gamma_k}_i(\bz_k))_\kkk$ converges weakly to $ J_{\frac{d_1}{\Bar{\gamma}}}\left(\tfrac{1}{d_1}(\bM \Bar{\bz})_1\right)$, and $(\bx_k)_\kkk$ converges to $\Bar{\bx}=(\Bar{x}, \dots, \Bar{x}) \in X^n$. Furthermore, adding the $n$ inclusions above yields \begin{equation*}
   0 \in \sum_{i=1}^nA_i\Bar{x} +F_i \Bar{x},
\end{equation*} and since for all $i=1, \dots,n$, $ 1 = \sum_{j=1}^{n} P_{ji} = \sum_{j=1}^{\min\{i-1,p\}} P_{ji} = \sum_{j=1}^{p} P_{ji} $ due to Assumption~\ref{a:stand}\ref{a:stand_P} and \ref{a:stand_triangular}, then $\sum_{i=1}^n F_i \Bar{x} = \sum_{i=1}^n\sum_{j=1}^{\min\{i-1,p\}}P_{ij}B_jx = \sum_{j=1}^p B_j \Bar{x}$. Therefore, \begin{equation*}
    0 \in \sum_{i=1}^nA_i\Bar{x} + \sum_{j=1}^pB_j\Bar{x}.
\end{equation*}




\end{proof}

\noindent\textit{Proof of Theorem~\ref{t:dist-FB-convergence}}. First, from Proposition~\ref{p:aux-1}\ref{p:aux-1-i}, $(\bz_k)_\kkk$ converges weakly to a point $\Bar{\bz} \in \Fix T_{\Bar{\theta},\Bar{\gamma}} $ with $\Bar{\theta} = \liminf_\kkk \theta_k$, and $\bz_k - T_{\theta_k,\gamma_k}\bz_k \to 0$. From, \eqref{e:FB-operator} and \eqref{e:var-FB}, $\bw_k = (1 - \lambda_k)\bz_k + \lambda_k T_{\theta_k, \gamma_k}\bz_k$, and thus $\bz_k - \bw_k = \lambda_k (\bz_k - T_{\theta_k, \gamma_k}\bz_k) \to 0  $. Hence, $(\bw_k)_\kkk$ converges weakly to a point $\Bar{\bz}$ as well. The claims for $(\bx_k)_\kkk$ correspond to Proposition~\ref{p:aux-2}\ref{p:aux-2-iv}. \hfill $\square$



\end{document}